\begin{document}
\title{On Deformations of Pasting Diagrams, II}

\copyrightyear{2013}

\keywords{pasting diagrams, pasting schemes, deformation theory}
\amsclass{Primary:  18D05, 13D03, Secondary: 18E05}
\thanks{}

\eaddress{dyetter@math.ksu.edu}

\author{Tej Shrestha \\ D.\ N.\ Yetter}
\cauthor{D.\ N.\ Yetter}

\address{Department of Mathematics\\
Ohio University\\ Athens, OH 45701 \\ \\
Department of Mathematics \\
Kansas State University \\ Manhattan, KS 66506}

\bibliographystyle{plain}

\maketitle

\begin{abstract} We continue the development of the infinitesimal deformation theory of pasting diagrams of $k$-linear categories begun in \cite{Yetter}.  In \cite{Yetter} the standard result that all obstructions are cocycles was established only for the elementary, composition-free parts of pasting diagrams.  In the present work we give a proof for pasting diagrams in general.  As tools we use the method developed by Shrestha \cite{Shrestha} of representing formulas for obstructions, along with the corresponding cocycle and cobounding conditions by suitably labeled polygons, giving a rigorous exposition of the previously heuristic method, and deformations of pasting diagrams in which some cells are required to be deformed trivially. 
\end{abstract}

\section{Introduction}

It is the purpose of this paper to extend the results of Yetter \cite{Yetter}, generalizing classical results of Gerstenhaber \cite{G} and Gerstenhaber and Schack \cite{GS} on the infinitesimal deformation theory of associative algebras and poset-indexed diagrams of associative algebras to a deformation theory for arbitrary pasting diagrams of $k$-linear categories, $k$-linear functors, and natural transformations.  In particular, in \cite{Yetter} the standard result that obstructions are cocycles was established only for the simplest parts of pasting diagrams:  for pasting diagrams in which no compositions either 1- or 2-dimensional occur.  In this paper we will establish it for deformation complexes of pasting diagrams in general, by first giving a detailed and rigorous exposition of a method developed heuristically by Shrestha \cite{Shrestha}, then applying the method to prove that obstructions are all cocycles in a family of pasting diagrams sufficient to imply the result in general.

Along the way to proving that obstructions are closed in general, we will have occasion to consider deformations of pasting diagrams in which specified functors or natural transformations are required to be deformed trivially.  Although in the present work such conditions will be used only to reduce the problem of showing obstructions are cocycles to simple instances, the ability to handle deformations subject to such restrictions could well be useful in other settings.

We will also make explicit a point overlooked in the statements and proofs of \cite{Yetter} Theorems 8.2 and 8.3:  the cochain maps constructed in those theorems depend on choices of association for 2-dimenensional compositions in the pasting digaram.  However, as we establish here, the cochain maps are independent of those choices, up to algebraic homotopy, and thus, the isomorphism type of the deformation complex for a pasting diagram is well-defined in either the homotopy category or derived category of cochain complexes over $k$.

\section{Chain maps from pasting composition}

At the level of the Hochschild cochain complexes $C^\bullet(F,G)$, the interesting cochain maps are described fully in \cite{Yetter}.  Our purpose in this section is, rather, to describe chain maps induced by pasting compositions on the full deformation complexes ${\frak C}^\bullet (D)$, when $D$ is a composable pasting diagram.

Proposition 4.5 of \cite{Yetter} constructs two chain maps:\smallskip

If $F,G:{\cal C}\rightarrow {\cal D}$, and $H:{\cal D}\rightarrow
{\cal E}$
are functors, then there is a cochain map 
$H_*(-):C^\bullet(F,G)\rightarrow
C^\bullet(H(F),H(G))$
given by 

\[ H_*(\phi)(f_1,\ldots ,f_n) := H(\phi(f_1,\ldots ,f_n)). \]

Similarly if $F:{\cal C}\rightarrow {\cal D}$ and 
$G,H:{\cal D}\rightarrow {\cal E}$ are functors, there is a cochain
map
$F^*(-) = -(F^\bullet):C^\bullet(G,H)\rightarrow
C^\bullet(G(F),H(F))$ given by

\[ F^*(\phi)(f_1,\ldots , f_n) = \phi(F^\bullet)(f_1,\ldots , f_n) := 
\phi(F(f_1),\ldots ,F(f_n)). \]\smallskip

 Proposition 4.6 of \cite{Yetter} gives two more:\smallskip

If $\tau:F_1\Rightarrow F_2$ is a natural transformation, then
post- (resp. pre-) composition by $\tau$ induces a cochain map
$\tau^* :C^\bullet(F_2,G)\rightarrow C^\bullet(F_1,G)$ 
(resp. $\tau_*:C^\bullet(G,F_1)\rightarrow C^\bullet(G,F_2)$
for any
functor $G$. \smallskip

Here 

\[ \tau^{*}(\phi) = \tau \cup \phi \]

\noindent and

\[ \tau_{*}(\phi) = \phi \cup \tau. \]

And, \cite{Yetter} Proposition 4.7 give a cochain map, which ties together all of the deformations when a natural transformation, its source and target, and their common source and target are deformed simultaneously:

Let $\sigma:F\Rightarrow G$ be a natural transformation between $k$-linear functors $F,G:{\cal A} \rightarrow {\cal B}$.

Let $C^\bullet({\cal A} \stackrel{F}{G} {\cal B})$ denote the cone on the cochain map

\[ \left[ \begin{array}{cc}
                  F_* & -F^* \\               
                  G_* & -G^*  
               \end{array} \right]: 
             C^\bullet({\cal A})\oplus C^\bullet({\cal B})
               \rightarrow C^\bullet(F) \oplus C^\bullet(G). \]

The cochain groups are

\[ C^\bullet({\cal A} \stackrel{F}{G} {\cal B}) :=
C^{\bullet+1}({\cal A})
\oplus C^{\bullet+1}({\cal B})\oplus C^\bullet(F)\oplus C^\bullet(G)
\]

with coboundary operators given by

\[ d_{{\cal A} \stackrel{F}{G} {\cal B}} = \left[
    \begin{array}{cccc}
	-d_{\cal A} & 0 & 0 & 0 \\
	0 & -d_{\cal B} & 0 & 0\\
	-F_{*} & F^{*} & d_{F} & 0\\
	-G_{*} & G^{*} & 0 & d_{G}
     \end{array} \right] \]

Proposition 4.7\footnote{The negative sign on $\sigma^*$ was omitted in the statement of the Proposition, though it is plainly present in the proof given in \cite{Yetter}} was then 
\smallskip

Let $\sigma:F\Rightarrow G$ be a natural
transformation, 
then

\[ \sigma \ddagger := 
\left[ \begin{array}{cccc} 0 & (-)\{\sigma \} & \sigma_*
&  -\sigma^* \end{array} \right]: 
C^\bullet({\cal A} \stackrel{F}{G} {\cal B}) \rightarrow
C^\bullet(F,G) \]

\noindent is a cochain map.	
\smallskip

In \cite{Yetter} the chain maps of Propositions 8.1, 8.2 and 8.3 were used only to construct the deformation complex of a general pasting diagram.  In fact, they can be assembled into chain maps from the deformation complex of a composable pasting diagram to the simpler pasting diagram in which the compositions have been carried out.  Propositions \ref{chainmapfrom2comp}, \ref{chainmapfrom1precomp} and \ref{chainmapfrom1postcomp} give the map explicitly in the cases of a single 2 composition, precomposition of a natural transformation by a functor, and postcompostion of a natural transformation by a functor, respectively.

In each case the proof begins by collecting the maps from the Propositions of \cite{Yetter} with values in the direct summands corresponding to cells of the pasting diagram in which the compositions have been performed, together with identity maps for those cells which remain from the original diagram, and arranging their summands in the correct places of a matrix
of maps.  What is indicated in the sketches of proofs following each proposition are the main difficulties in the unedifying calculation which shows that the result is, in fact, a chain map between the deformation complexes.


\begin{proposition}  \label{chainmapfrom2comp}

Let $F,G,H:{\cal A}\rightarrow {\cal B}$ be $k$-linear functors, and $\sigma:F\Rightarrow G$ and $\tau:G\Rightarrow H$ be natural transformations.  Let $D$ be the pasting diagram consisting of both $\sigma$ and $\tau$ and their (iterated) sources and targets, and let $D^\prime$ be the pasting diagram consisting of the 2-dimensional composition $\sigma\tau$ and its (iterated) sources and targets.  Then there is a chain map

 \[\circ_2^\bullet: {\frak C}^\bullet(D)\rightarrow {\frak C}^\bullet(D^\prime) \] 

\noindent induced by the 2-dimensional composition.  

In particular if the summands of $ {\frak C}^\bullet(D)$ (resp. $ {\frak C}^\bullet(D^\prime)$) are given in the order

\[ C^{\bullet+2}({\cal A}) \oplus C^{\bullet+2}({\cal B})\oplus C^{\bullet+1}(F) \oplus 
C^{\bullet+1}(G) \oplus C^{\bullet+1}(H) \oplus C^{\bullet}(F,G) \oplus C^{\bullet}(G,H)\]

\noindent (resp.

\[C^{\bullet+2}({\cal A}) \oplus C^{\bullet+2}({\cal B})\oplus C^{\bullet+1}(F)\oplus 
C^{\bullet+1}(H)\oplus C^{\bullet}(F,H) \; \mbox{\it )}\]

\noindent $\circ_2^\bullet$ is given by

\[ \left[
    \begin{array}{ccccccc}
Id_{C^{\bullet+2}({\cal A})} & 0 & 0 & 0 & 0 & 0 & 0 \\
0 & Id_{C^{\bullet+2}({\cal B})} & 0 & 0 & 0 & 0 & 0 \\
0 & 0 & Id_{C^{\bullet+1}(F)} & 0 & 0 & 0 & 0 \\
0 & 0 & 0 & 0 & Id_{C^{\bullet+1}(H)} & 0 & 0 \\
0 & (-)\{\sigma,\tau\} & 0 & 0 & 0 & \tau_* & \sigma^*
     \end{array} \right]  \]

\end{proposition}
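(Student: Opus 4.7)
The plan is to verify the chain-map identity $d_{D^\prime}\circ\circ_2^\bullet=\circ_2^\bullet\circ d_D$ entrywise, regarding both sides as block matrices of maps between the direct summands of the two deformation complexes. Here $d_D$ and $d_{D^\prime}$ are the block coboundary matrices obtained by assembling the building blocks of Propositions 4.5--4.7 of \cite{Yetter} applied to all cells of $D$ and $D^\prime$, so that each of the $\sigma$- and $\tau$-rows of $d_D$ is a copy of the formula of Proposition 4.7, and the $\sigma\tau$-row of $d_{D^\prime}$ is the corresponding formula for the vertical composite. Because the first four rows of $\circ_2^\bullet$ are simply the inclusions of those summands of ${\frak C}^\bullet(D)$ whose cells ${\cal A}$, ${\cal B}$, $F$, and $H$ persist into $D^\prime$, the top four rows of the chain-map equation reduce at once to matching the corresponding entries of $d_D$ and $d_{D^\prime}$, together with the observation that the $G$-column of $d_D$ has no image under $\circ_2^\bullet$ in those first four target rows.

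All of the substantive content therefore lies in the bottom row, the $C^\bullet(F,H)$-component, which I would treat column by column. The columns indexed by the functor summands $F$, $G$, and $H$ of $D$ reduce to the identities
\[ (\sigma\tau)_* = \tau_*\circ\sigma_*,\qquad (\sigma\tau)^* = \sigma^*\circ\tau^*,\qquad \sigma^*\circ\tau_* = \tau_*\circ\sigma^*, \]
that is, associativity of the Gerstenhaber cup product and the interchange law for cups with natural transformations. The columns indexed by $C^\bullet(F,G)$ and $C^\bullet(G,H)$ reduce to the assertion that $\tau_*$ and $\sigma^*$ are cochain maps, which is precisely the content of Proposition 4.6 of \cite{Yetter}. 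The ${\cal A}$-column vanishes on both sides, since $\circ_2^\bullet$ contains no entry from the ${\cal A}$-summand to $C^\bullet(F,H)$ and $d_{D^\prime}$ contains no direct entry from ${\cal A}$ into $C^\bullet(F,H)$.

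The one genuinely substantive entry is the ${\cal B}$-column, where the identity required is
\[ \phi\{\sigma\tau\} + d_{F,H}\bigl(\phi\{\sigma,\tau\}\bigr) = -(d_{\cal B}\phi)\{\sigma,\tau\} + \tau_*\bigl(\phi\{\sigma\}\bigr) + \sigma^*\bigl(\phi\{\tau\}\bigr) \]
for $\phi\in C^{\bullet+2}({\cal B})$. This is a Gerstenhaber-type brace identity relating the two-variable brace $\phi\{\sigma,\tau\}$ to the single-variable brace of the vertical composite $\sigma\tau$, modulated by the Hochschild coboundary of $\phi$ and the cup-product corrections coming from $\tau_*\bigl(\phi\{\sigma\}\bigr)$ and $\sigma^*\bigl(\phi\{\tau\}\bigr)$. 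Establishing it, together with the bookkeeping of signs, is the real work of the proof, and is precisely where Shrestha's polygon representation earns its keep: each of the six terms above becomes a sum of labeled polygons, and the required equality is witnessed by a pairing of polygons along shared edges realising the cancellation. The main obstacle is thus this single brace identity; once it is in hand, the remaining entries of the matrix equation slot into place from the standard identities already recorded in \cite{Yetter}.
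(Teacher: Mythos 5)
Your proposal matches the paper's (sketched) proof: the paper likewise treats this as a purely computational entrywise verification of the chain-map identity, with the sole subtlety being the one coordinate (the ${\cal B}$-column of the $C^\bullet(F,H)$-row) that requires a Gerstenhaber--Voronov-type brace relation, exactly the identity you isolate. The only cosmetic difference is that the paper points to \cite{GV} or the proof of Proposition 4.7 of \cite{Yetter} for that relation rather than to the polygonal method, but the argument is the same.
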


\begin{proof} (Sketch)
The only subtlety in the completely computational verification that this is a chain map involves one coordinate in which a relation of the sort in Gerstenhaber and Voronov \cite{GV} or the proof of Proposition 4.7 in \cite{Yetter} is needed. \end{proof}

\begin{proposition} \label{chainmapfrom1precomp}

Let $F:{\cal A}\rightarrow {\cal B}$ and $G,H:{\cal B}\rightarrow {\cal C}$ be $k$-linear functors, and $\sigma:G\Rightarrow H$ be a natural transformation.  Let $D$ be the pasting diagram consisting of $\sigma$ and its (iterated) sources and targets together with $F$ and $\cal A$, and let $D^\prime$ be the pasting diagram consisting of the 1-dimensional composition $\sigma_F$ and its (iterated) sources and targets.  Then there is a chain map

 \[\circ_{1,l}^\bullet: {\frak C}^\bullet(D)\rightarrow {\frak C}^\bullet(D^\prime) \] 

\noindent induced by the 1-dimensional composition.  

In particular if the summands of $ {\frak C}^\bullet(D)$ (resp. $ {\frak C}^\bullet(D^\prime)$) are given in the order

\[ C^{\bullet+2}({\cal A}) \oplus C^{\bullet+2}({\cal B}) \oplus C^{\bullet+2}({\cal C})\oplus C^{\bullet+1}(F) \oplus 
C^{\bullet+1}(G) \oplus C^{\bullet+1}(H) \oplus C^{\bullet}(G,H)\]

\noindent (resp.

\[C^{\bullet+2}({\cal A}) \oplus C^{\bullet+2}({\cal C})\oplus C^{\bullet+1}(G(F))\oplus 
C^{\bullet+1}(H(F))\oplus C^{\bullet}(G(F),H(F)) \; \mbox{\it )}\]

\noindent $\circ_{1,l}^\bullet$ is given by

\[ \left[
    \begin{array}{ccccccc}
Id_{C^{\bullet+2}({\cal A})} & 0 & 0 & 0 & 0 & 0 & 0 \\
0 & 0 & Id_{C^{\bullet+2}({\cal C})} & 0 & 0 & 0 & 0 \\
0 & 0 & 0 & G_* & F^*  & 0 & 0 \\
0 & 0 & 0 & H_* & 0 & F^* & 0 \\
0 & 0 & 0 & 0 & 0 & 0 & F^*
     \end{array} \right]  \]

\end{proposition}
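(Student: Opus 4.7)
(Sketch)
The plan is to verify directly that $\Phi := \circ_{1,l}^\bullet$ satisfies the chain-map equation $d_{D'} \circ \Phi = \Phi \circ d_D$. To this end I would first write out $d_D$ and $d_{D'}$ explicitly as block matrices assembled from Propositions 4.5--4.7 of \cite{Yetter}: diagonal Hochschild differentials $-d_{\cal A}, -d_{\cal B}, -d_{\cal C}, d_F, d_G, d_H, d_{(G,H)}$ for $d_D$ (and the analogous entries for $d_{D'}$), off-diagonal entries $\pm F_*, F^*, \pm G_*, G^*, \pm H_*, H^*$ coming from the cone constructions for the functors, and a bottom row for $C^\bullet(G,H)$ supplied by $\sigma\ddagger$ (and by $\sigma_F \ddagger$ for $d_{D'}$).

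Next, I would compare the $5 \times 7$ block matrices $d_{D'}\Phi$ and $\Phi d_D$ entry by entry. Most of the entries are either zero on both sides, or reduce to routine facts: the functoriality identities $(G(F))_* = G_* \circ F_*$ and $(G(F))^* = F^* \circ G^*$; the evident commutation $G_* \circ F^* = F^* \circ G_*$ (and the analogue for $H$); and the fact that $F^*, G_*, H_*$ are themselves cochain maps on Hochschild complexes. The entries in the last row corresponding to columns $C^{\bullet+2}({\cal C}), C^{\bullet+1}(G), C^{\bullet+1}(H)$ require the identities $F^* \circ ((-)\{\sigma\}) = (-)\{\sigma_F\}$, $F^* \circ \sigma_* = (\sigma_F)_* \circ F^*$, and $F^* \circ \sigma^* = (\sigma_F)^* \circ F^*$; each follows immediately from $(\sigma_F)_X = \sigma_{F(X)}$ together with the formula $F^*(\phi)(f_1,\ldots,f_n) = \phi(F(f_1),\ldots,F(f_n))$.

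The hard part will be the single remaining entry, in the last row and the column for $C^{\bullet+1}(F)$. There $d_{D'}\Phi$ produces $(\sigma_F)_* \circ G_* - (\sigma_F)^* \circ H_*$, while $\Phi d_D$ produces $0$, because $F^*$ occupies only the $C^\bullet(G,H)$ slot of $\Phi$'s last row whereas $d_D$'s column for $C^{\bullet+1}(F)$ is supported only at the diagonal entry $d_F$. The required identity is thus $(\sigma_F)_* \circ G_* = (\sigma_F)^* \circ H_*$ as maps $C^{\bullet+1}(F) \to C^\bullet(G(F), H(F))$. For $\phi \in C^{\bullet+1}(F)$ the left side evaluates to $\sigma_{F(t(f_1))} \circ G(\phi(f_1,\ldots,f_n))$ and the right to $H(\phi(f_1,\ldots,f_n)) \circ \sigma_{F(s(f_n))}$, and these agree by naturality of $\sigma$ applied to the ${\cal B}$-morphism $\phi(f_1, \ldots, f_n)$. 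With this identity in hand and the signs in $d_D, d_{D'}$ properly tracked, the chain-map equation follows.
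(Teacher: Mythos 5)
Your proposal is correct and follows the same route as the paper's (much terser) sketch: a direct entrywise verification of the chain-map equation assembled from the cochain maps of Propositions 4.5--4.7 of \cite{Yetter}. In particular, the coordinate you single out as the hard one --- the last-row entry over $C^{\bullet+1}(F)$, which vanishes because $(\sigma_F)_*\circ G_* = (\sigma_F)^*\circ H_*$ by naturality applied to $\phi(f_1,\ldots,f_n)$ --- is exactly the coordinate the paper identifies as the one that ``vanishes by the naturality of $\sigma_F$.''
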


\begin{proof}(Sketch)
Here the ``hardest'' verification is a coordinate which vanishes by the naturality of $\sigma_F$.
\end{proof}

\begin{proposition}  \label{chainmapfrom1postcomp}

Let $F,G:{\cal A}\rightarrow {\cal B}$ and $H:{\cal B}\rightarrow {\cal C}$ be $k$-linear functors, and $\sigma:F\Rightarrow G$ be a natural transformation.  Let $D$ be the pasting diagram consisting of $\sigma$ and its (iterated) sources and targets, together with $H$ and $\cal C$, and let $D^\prime$ be the pasting diagram consisting of the 1-dimensional composition $\tau = H(\sigma)$ and its (iterated) sources and targets.  Then there is a chain map

 \[\circ_{1,r}^\bullet: {\frak C}^\bullet(D)\rightarrow {\frak C}^\bullet(D^\prime) \] 

\noindent induced by the 2-dimensional composition.  

In particular if the summands of $ {\frak C}^\bullet(D)$ (resp. $ {\frak C}^\bullet(D^\prime)$) are given in the order

\[ C^{\bullet+2}({\cal A}) \oplus C^{\bullet+2}({\cal B}) \oplus C^{\bullet+2}({\cal C})\oplus C^{\bullet+1}(F) \oplus 
C^{\bullet+1}(G) \oplus C^{\bullet+1}(H) \oplus C^{\bullet}(F,G)\]

\noindent (resp.

\[C^{\bullet+2}({\cal A}) \oplus C^{\bullet+2}({\cal C})\oplus C^{\bullet+1}(H(F))\oplus 
C^{\bullet+1}(H(G))\oplus C^{\bullet}(H(F),H(G)) \; \mbox{\it )}\]

\noindent $\circ_{1,r}^\bullet$ is given by

\[ \left[
    \begin{array}{ccccccc}
Id_{C^{\bullet+2}({\cal A})} & 0 & 0 & 0 & 0 & 0 & 0 \\
0  & 0 & Id_{C^{\bullet+2}({\cal C})} & 0 & 0 & 0 & 0 \\
0 & 0 & 0 & H_* & 0 & F^* & 0 \\
0 & 0 & 0 & 0 & H_* & G^* & 0 \\
0 & 0 & 0 & 0 & 0 & (-)\{\sigma\} & H_*
     \end{array} \right] \]

\end{proposition}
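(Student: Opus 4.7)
(Sketch)
The plan is to verify the chain-map equation $d_{D^\prime} \circ \circ_{1,r}^\bullet = \circ_{1,r}^\bullet \circ d_D$ block by block. As a first step I would write the two differentials explicitly, following the cone constructions of \cite{Yetter} Propositions 4.5--4.7. On ${\frak C}^\bullet(D)$ the operator $d_D$ is the block matrix whose diagonal is $(-d_{\cal A}, -d_{\cal B}, -d_{\cal C}, d_F, d_G, d_H, d_{F,G})$, with off-diagonal entries $-F_*, -G_*, -H_*, F^*, G^*, H^*$ coming from the three cells' Hochschild cones, and $\sigma_*, -\sigma^*, (-)\{\sigma\}$ coming from the cone construction of Proposition 4.7. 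The differential $d_{D^\prime}$ has exactly the same shape, but with $HF, HG$ in place of $F, G$, no summands for ${\cal B}$ or $H$, and $H(\sigma)$ in place of $\sigma$.

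Next I would multiply out both composites and check that corresponding entries agree. Most comparisons collapse to three elementary observations: the definitional identities $(HF)_* = H_* F_*$, $(HF)^* = F^* H^*$, $H(\sigma)_* = H_* \sigma_*$, and $H(\sigma)^* = H_* \sigma^*$ (all immediate from the formulas of Proposition 4.5 together with $H(\sigma)_A = H(\sigma_A)$); the chain-map identities $d_{HF}H_* = H_* d_F$ and $d_{HF}F^* = F^* d_H$ (and their analogues for $HG$) from Proposition 4.5; and the chain-map property for the pushforward $H_*: C^\bullet(F,G)\rightarrow C^\bullet(HF,HG)$, which follows from the cone-style argument of Proposition 4.6. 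The signs introduced by the cone constructions cancel uniformly because the same sign pattern is in force on both sides.

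The one coordinate whose verification is genuinely non-routine sits in the bottom row: matching the contribution to the $C^\bullet(HF,HG)$ summand received from the $C^{\bullet+1}(H)$ summand via $(-)\{\sigma\}$ on one side against the contributions received from the other summands via $H_*$, $H(\sigma)_*$, and $H(\sigma)^*$ on the other. Expanding $d_{D^\prime} \circ \circ_{1,r}^\bullet$ produces the term $d_{HF,HG}\bigl(\phi\{\sigma\}\bigr)$ for $\phi \in C^{\bullet+1}(H)$, while $\circ_{1,r}^\bullet \circ d_D$ produces a combination of $(d_H\phi)\{\sigma\}$ together with pushforwards by $H$ of $\sigma_*$- and $\sigma^*$-terms rewritten as $H(\sigma)_*$- and $H(\sigma)^*$-terms. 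Matching these is precisely a Gerstenhaber--Voronov-style brace identity of exactly the flavor used in the proof of \cite{Yetter} Proposition 4.7, and this is where all the serious bookkeeping resides; I expect this to be the main obstacle, with the remaining comparisons being mechanical.
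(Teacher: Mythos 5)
Your proposal is correct and follows essentially the same route as the paper's own (very brief) sketch: a block-by-block computational verification in which the only genuinely delicate coordinates lie in the bottom row, one resolved by a Gerstenhaber--Voronov-style brace relation as in the proof of Proposition 4.7 of \cite{Yetter} and the rest by unpacking the definitions of the pushforward and pullback maps to obtain the needed commutativity relations. Your identification of the key identities $(HF)_*=H_*F_*$, $H(\sigma)_*=H_*\sigma_*$, etc., and of where the serious bookkeeping resides matches the paper's account.
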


\begin{proof} (Sketch)
Here, almost all of the coordinates in the bottom row present some minor ``difficulty'', one depends on a Gerstenhaber-Voronov-style relation, while most of the others require unpacking the definitions of pullback or pushforward maps along functors and natural transformations to see some commutativity relation between them.
\end{proof}

\section{Examples and a note on 2-associativity}

In this section we use the cochain maps of the previous section to give explicit examples of deformation complexes associated to various shapes commutative pasting diagrams.  In one example we see
that cochain maps induced by two different associations of the 2-dimensional composition
are not actually equal, but only chain homotopic (or, to put it another way, are not equal
in the abelian category of cochain complexes, but {\em are} in the homotopy category of cochain complexes and thus {\em a fortiori} in the derived category).

In the examples, we use commutative pasting diagrams because only in that context is the effect of the compositions of 2-arrows evident:  if no commutativities are enforced, the iterative cone construction given explicitly in \cite{Yetter} Proposition 4.7 suffices to describe the entire structure of the deformation complex of the pasting diagram, as indeed it does in the case of
commutative pasting diagrams in which no compositions occur (for example, the commutative ``pillow'' consisting of two categories a pair of parallel functors between them, and two copies of the same natural transformation between the functors).  Only when compositions are involved do the cochain maps of the previous section play a role.

In what follows, we leave zero entries in matrices of maps giving coboundaries blank, as this seems to improve readability.


We begin with the simplest example, a ``commutative pillow with triangular cross-section'':

\begin{example} \label{triangularpillow}
Consider the pasting diagram given by three functors $F,G,H: {\cal A}\rightarrow {\cal B}$
and three natural transformations $\sigma:F \Rightarrow G$, $\tau:G \Rightarrow H$ and
$\upsilon:F \Rightarrow H$ , with the obvious 0-, 1- and
2-cells and a single 3-cell enforcing the condition $\sigma \tau = \upsilon$.

The deformation complex of the pasting diagram is then given by

\[ C^{\bullet+3}({\cal A}) \oplus C^{\bullet+3}({\cal B})\oplus C^{\bullet+2}(F) \oplus 
C^{\bullet+2}(G) \oplus C^{\bullet+2}(H) \oplus C^{\bullet+1}(F,G) \oplus C^{\bullet+1}(G,H)\]
\[\oplus C^{\bullet+1}(F,H) \oplus C^{\bullet}(F,H)\]

\noindent
with coboundary given by

\[ \left[
    \begin{array}{ccccccccc}
-d_{\cal A} & & & & & & & & \\
& -d_{\cal B} & & & & & & &\\
-F_* & F^* & d_F & & & & & &\\
-G_* & G^* & & d_G & & & & &\\
-H_* & H^* & & & d_H & & & &\\
& (\cdot)\{\sigma\} & \sigma_* & -\sigma^* & & -d_{F,G} & & & \\
& (\cdot)\{\tau\} &  &\tau_* & -\tau^* & & -d_{G,H} & & \\
& (\cdot)\{\upsilon\} & \upsilon_* & & -\upsilon^* & & & -d_{F,H} & \\
& (\cdot)\{\sigma, \tau\} & & & & \tau_* & \sigma^* & -Id & d_{F,H}
 \end{array} \right] \]

\end{example}

Note that here we have chosen to consider the composition $\sigma \tau$ to be the source of the 3-cell and the single natural transformation $\upsilon$ to be the target.  For the opposite choice the complex would be the same, except that the non-diagonal entries of the last row would be negated.

The complex ${\frak C}^\bullet(P)$ of Example \ref{triangularpillow} is related to the map $\Phi$ of Proposition \ref{chainmapfrom2comp} by

\begin{proposition} \label{babyqi}
${\frak C}^\bullet(P)$ is quasi-isomorphic to the (dual) mapping cylinder on $\Phi$.
\end{proposition}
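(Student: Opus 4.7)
The plan is to sandwich both ${\frak C}^\bullet(P)$ and the dual mapping cylinder on $\Phi$ between a common intermediate complex, namely the shifted complex ${\frak C}^{\bullet+1}(D)$ obtained from the deformation complex of the subdiagram $D$ consisting only of the 2-cells $\sigma$ and $\tau$ together with their iterated sources and targets.

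First I would show that the projection $\pi:\ {\frak C}^\bullet(P)\rightarrow {\frak C}^{\bullet+1}(D)$ onto the first seven summands is a chain map and a quasi-isomorphism. That $\pi$ is a chain map is read directly off the coboundary matrix of Example~\ref{triangularpillow}: every non-zero entry in the last two columns lies in the last two rows, so the first seven components of $d_P$ depend on and only on the first seven inputs, and they do so by exactly the formulas defining $d_D$. The kernel of $\pi$ is the subcomplex $C^{\bullet+1}(F,H)\oplus C^\bullet(F,H)$ with induced differential
\[ d(x,y)=(-d_{F,H}\,x,\ -x + d_{F,H}\,y), \]
which is precisely the (signed) mapping cone on the identity of $C^\bullet(F,H)$. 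It is therefore contractible, via the explicit null-homotopy $s(x,y)=(-y,0)$, for which a one-line check gives $ds+sd=Id$. The long exact sequence attached to the short exact sequence $0\rightarrow\ker\pi\rightarrow {\frak C}^\bullet(P)\rightarrow{\frak C}^{\bullet+1}(D)\rightarrow 0$ then yields that $\pi$ is a quasi-isomorphism.

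Second, I would observe that the dual mapping cylinder on $\Phi$ admits its own canonical deformation retract onto the source ${\frak C}^\bullet(D)$, shifted up by one degree to match the 3-dimensional character of $P$. Concatenating this retract with $\pi$---one of the two quasi-isomorphisms inverted in the derived category---establishes the claim.

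The main obstacle is bookkeeping with shift and sign conventions: $P$ is 3-dimensional whereas $D$ and $D'$ are 2-dimensional, so every summand in ${\frak C}^\bullet(P)$ carries an extra shift of one degree upward, and this shift must be absorbed correctly into the dual-mapping-cylinder construction. A more direct alternative would be to write down an explicit chain map $\Psi:\ {\frak C}^\bullet(P) \rightarrow$ (dual mapping cylinder on $\Phi$) whose non-trivial content is that the last row of $d_P$---namely the cocycle equation $-x_8 + \tau_*(x_6) + \sigma^*(x_7) + (\cdot)\{\sigma,\tau\}(x_2) + d_{F,H}(x_9) = 0$---is exactly the defining cocycle condition on the ``cylinder coordinate'' of the target. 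Verifying that $\Psi$ is a quasi-isomorphism would then reduce, after unpacking the definitions of $(-)\{\sigma,\tau\}$, $\tau_*$ and $\sigma^*$ from the propositions of the previous section, to the same acyclicity computation as above.
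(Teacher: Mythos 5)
Your argument is correct, but it is organized differently from the paper's. The paper proves a single general lemma: for a chain map $\phi$ between mapping cones which restricts to the identity on the common source $C^\bullet$, the (dual) mapping cylinder on $\phi$ receives an explicit inclusion from a four-term complex (which, in the application, is ${\frak C}^\bullet(P)$), and the cokernel of that inclusion is $cone(Id_{C^{\bullet+1}})$, hence acyclic; this yields one explicit quasi-isomorphism of ${\frak C}^\bullet(P)$ into the cylinder --- essentially the map $\Psi$ you sketch as a ``direct alternative'' at the end but do not carry out. You instead collapse both sides onto the common shifted source ${\frak C}^{\bullet+1}(D)$, killing $cone(Id_{C^\bullet(F,H)})$ on the pillow side and $cone(Id_{{\frak C}^\bullet(D^\prime)})$ on the cylinder side. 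Both arguments turn on the same acyclicity mechanism (a cone on an identity is contractible), but yours buys simplicity at the cost of explicitness: it uses nothing about the block form of $\Phi$ --- indeed it shows the quasi-isomorphism type of the dual mapping cylinder is just ${\frak C}^\bullet(D)[1]$, independent of $\Phi$ --- whereas the paper's inclusion records exactly how the bottom row of the pillow's coboundary encodes the components $(-)\{\sigma,\tau\}$, $\tau_*$, $\sigma^*$ of $\Phi$, and its lemma is reused verbatim for the later proposition on arbitrary single-3-cell diagrams. The one point you must still nail down is that your two projections land in literally the same complex: the induced differential on the first seven summands of ${\frak C}^\bullet(P)$ and the differential $-d_{{\frak C}^\bullet(D)}$ appearing in the first block of the cylinder must agree, or at least be intertwined by an explicit sign isomorphism. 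You flag this, correctly, as bookkeeping, but it is where all of the residual content of the argument sits.
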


\begin{proof} This follows immediately by applying the following general lemma about (dual) mapping cylinders of chain maps between mapping cones.
\end{proof}

\begin{lemma}
Let $f:C^\bullet \rightarrow A^\bullet$ and $g:C^\bullet \rightarrow B^\bullet$ be chain maps, and suppose $\phi: cone(f) \rightarrow cone(g)$ is a chain map between their 
mapping cones of the form

\[ \phi = \left[ \begin{array}{cc} 
                              Id_{C^\bullet} & \\
				\sigma & \tau \end{array} \right] .\]

Then the (dual) mapping cylinder on $\phi$, $cone(f)[1] \oplus cone(g)[1] \oplus cone(g)$ with 
differential given by

\[ \left[ \begin{array}{ccc}
		-d_{cone(f)} &  &  \\
		 & -d_{cone(g)} &  \\
		-\phi & Id_{cone(g)} & d_{cone(g)} \end{array} \right] ,\]

is quasi-isomorphic to $C^{\bullet+2} \oplus A^{\bullet +1} \oplus B^{\bullet +1}  \oplus B^{\bullet} $ with differential given by

\[ \left[ \begin{array}{cccc}
 		-d_C & & & \\
		f & d_A & & \\
		g & & d_B & \\
		-\sigma & -\tau & Id_B & -d_B \end{array} \right] .\]
\end{lemma}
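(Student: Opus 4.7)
The plan is to show that the dual mapping cylinder $M = cone(f)[1] \oplus cone(g)[1] \oplus cone(g)$ and the target complex $N$ are each quasi-isomorphic to $cone(f)[1]$, whence $M \simeq N$ in the homotopy category. For $M$, I would first rewrite it as a mapping cone: define $\psi: cone(f) \oplus cone(g) \to cone(g)$ by $\psi(x_1, x_2) = x_2 - \phi(x_1)$, which is a chain map since $\phi$ is. Inspection of the block matrices shows that $M = cone(\psi)$. Now $\psi$ is a split surjection of chain complexes with chain-map section $s(y) = (0, y)$, so $cone(f) \oplus cone(g)$ decomposes as a direct sum of chain complexes $\ker\psi \oplus s(cone(g))$, with $\ker\psi \cong cone(f)$ via $x \mapsto (x, \phi(x))$. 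Correspondingly $M = cone(\psi) \cong cone(f)[1] \oplus cone(Id_{cone(g)})$, and since the second summand is acyclic, the projection is a quasi-isomorphism $M \to cone(f)[1]$.

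For the target $N$, I would exhibit the subcomplex $L \subset N$ consisting of the elements $(0, 0, b_1, b_2)$. Inspection of the given differential shows $L$ is closed under $d_N$, with $d_N(0, 0, b_1, b_2) = (0, 0, d_B b_1, b_1 - d_B b_2)$, and $L$ is acyclic: any cocycle in $L$ must satisfy $b_1 = d_B b_2$, and is seen to equal $d_N(0, 0, b_2, 0)$. The quotient $N/L$ has underlying graded module $C^{\bullet+2} \oplus A^{\bullet+1}$ and induced differential $(c, a) \mapsto (-d_C c, fc + d_A a)$, which, after the degree-wise sign twist $(c, a) \mapsto ((-1)^n c, (-1)^n a)$ in degree $n$, coincides with $d_{cone(f)[1]}$. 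Hence $N \to N/L \cong cone(f)[1]$ is a quasi-isomorphism, and combined with the preceding paragraph this establishes $M \simeq N$.

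The main obstacle will be sign bookkeeping throughout: verifying that $\psi$ and its section $s$ are chain maps under the cone differential convention in force, checking closure of $L$ under $d_N$ and its acyclicity, and confirming that the sign twist intertwines $d_{N/L}$ with $d_{cone(f)[1]}$. Each step is a short calculation, but all depend on keeping consistent track of the signs introduced by shifts, cones, and the specific decomposition of $\phi = \left(\begin{smallmatrix} Id & 0 \\ \sigma & \tau \end{smallmatrix}\right)$ into its components $Id$, $\sigma$, and $\tau$.
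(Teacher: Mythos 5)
Your argument is correct, but it takes a genuinely different route from the paper's. The paper's proof goes in one step and in the opposite direction: it includes the four-term complex into the dual mapping cylinder by $(c,a,b,b')^T \mapsto (c,a,c,b,0,b')^T$ and observes that the cokernel of this inclusion is $cone(Id_{C[1]})$, which is acyclic, so the inclusion is a quasi-isomorphism. You instead identify the common homotopy type of both complexes as $cone(f)[1]$: on the cylinder side by recognizing it as $cone(\psi)$ for the split epimorphism $\psi = (-\phi, Id)$ and splitting off the acyclic cone on $\psi$ restricted to the image of the section $s$, and on the four-term side by quotienting out the acyclic subcomplex $L$ carried by the two $B$-summands --- your $L$ is the mirror image, on the $B$-side, of the paper's cokernel $cone(Id_{C[1]})$ on the $C$-side. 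What the paper's approach buys is brevity (one map, one acyclicity check) and an explicit quasi-isomorphism from the four-term complex into the cylinder. What yours buys is the sharper conclusion that both complexes have the homotopy type of $cone(f)[1]$, which in the application to Proposition \ref{babyqi} says the deformation complex of the triangular pillow is a shift of the cone on $\circ_2^\bullet$; the price is a second acyclicity check and the fact that your two quasi-isomorphisms point the same way, giving a zig-zag through $cone(f)[1]$ rather than a single map --- harmless here, since everything is a complex of $k$-vector spaces, so quasi-isomorphisms are homotopy equivalences and the lemma asserts only quasi-isomorphism. Your caution about sign bookkeeping is also warranted: with the differentials exactly as printed, the paper's stated inclusion intertwines them only after inserting degree-dependent signs of precisely the kind you build into your identification of the quotient with $cone(f)[1]$.
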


\begin{proof}
Include the last complex into the (dual) mapping cylinder by $(c, a, b, b^\prime)^T \mapsto
(c, a, c, b, 0, b^\prime)^T$, the cokernel is plainly isomorphic to $cone(Id_{C[1]})$, which is acylic, so the subcomplex is quasi-isomorphic to the (dual) mapping cylinder as claimed.
\end{proof}

Continuing with pasting diagrams which only involve 2-dimensional compositions, consider next the two ``pillows with square cross-sections'':

\begin{example}
Consider the pasting diagram given by four functors $F,G,G^\prime, H: {\cal A}\rightarrow {\cal B}$
and four natural transformations $\sigma:F \Rightarrow G$, $\tau:G \Rightarrow H$,
$\sigma^\prime:F \Rightarrow G^\prime$ and $\tau^\prime:G^\prime \Rightarrow H$, with the obvious 0-, 1- and
2-cells and a single 3-cell enforcing the condition $\sigma \tau = \sigma^\prime \tau^\prime$.

The deformation complex of the pasting diagram is then given by

\[ C^{\bullet+3}({\cal A}) \oplus C^{\bullet+3}({\cal B})\oplus C^{\bullet+2}(F) \oplus 
C^{\bullet+2}(G) \oplus C^{\bullet+2}(G^\prime) \oplus C^{\bullet+2}(H) \oplus C^{\bullet+1}(F,G) \] 
\[\oplus C^{\bullet+1}(G,H)
\oplus C^{\bullet+1}(F,G^\prime) \oplus C^{\bullet+1}(G^\prime,H) \oplus C^{\bullet}(F,H)\]

\noindent
with coboundary given by

\[ \left[
    \begin{array}{ccccccccccc}
-d_{\cal A} & & & & & & & & & &\\
& -d_{\cal B} & & & & & & & & & \\
-F_* & F^* & d_F & & & & & & & &\\
-G_* & G^* & & d_G & & & & & & &\\
-G^\prime_* & G^{\prime *} & & & d_{G^\prime} & & & & & &\\
-H_* & H^* & & & & d_H & & & & & \\
& (\cdot)\{\sigma\} & \sigma_* & -\sigma^* & & & -d_{F,G} & & & & \\
& (\cdot)\{\sigma^\prime\} & \sigma^\prime_* & & -\sigma^{\prime *} & & & -d_{F,G^\prime} & & & \\
& (\cdot)\{\tau\} &  &\tau_* & & -\tau^* & & & -d_{G,H} & & \\
& (\cdot)\{\tau^\prime \} & & &\tau^\prime_*  & -\tau^{\prime *} & & & & -d_{G^\prime ,H}  & \\
& \phi_{\sigma,\tau,\sigma^\prime,\tau^\prime}  & & & & & \tau_* & -\tau^\prime_* & \sigma^* & -\sigma^{\prime *} & d_{F,H}
 \end{array} \right] \]

\noindent where $\phi_{\sigma,\tau,\sigma^\prime,\tau^\prime} := (\cdot)\{\sigma, \tau\} -(\cdot)\{\sigma^\prime, \tau^\prime\}$
\end{example}

\begin{example}
Consider the pasting diagram given by four functors $F,G,H,K: {\cal A}\rightarrow {\cal B}$
and four natural transformations $\sigma:F \Rightarrow G$, $\tau:G \Rightarrow H$,
$\upsilon:H \Rightarrow K$ and $\chi:F \Rightarrow K$, with the obvious 0-, 1- and
2-cells and a single 3-cell enforcing the condition $\sigma \tau \upsilon= \chi$.

The deformation complex of the pasting diagram is then given by

\[ C^{\bullet+3}({\cal A}) \oplus C^{\bullet+3}({\cal B})\oplus C^{\bullet+2}(F) \oplus 
C^{\bullet+2}(G) \oplus C^{\bullet+2}(H) \oplus C^{\bullet+2}(K) \oplus C^{\bullet+1}(F,G) \] 
\[\oplus C^{\bullet+1}(G,H)
\oplus C^{\bullet+1}(H,K) \oplus C^{\bullet+1}(F,K) \oplus C^{\bullet}(F,K)\]

\noindent
with coboundary given by

\[ \left[
    \begin{array}{ccccccccccc}
-d_{\cal A} & & & & & & & & & &\\
& -d_{\cal B} & & & & & & & & & \\
-F_* & F^* & d_F & & & & & & & &\\
-G_* & G^* & & d_G & & & & & & &\\
-H_* & H^* & & & d_H & & & & & &\\
-K_* & K^* & & & & d_K & & & & & \\
& (\cdot)\{\sigma\} & \sigma_* & -\sigma^* & & & -d_{F,G} & & & & \\
& (\cdot)\{\tau\} & & \tau_* &  -\tau^* & & & -d_{G,H} & & & \\
& (\cdot)\{\upsilon\} & & &\upsilon_* & & -\upsilon^* & &  -d_{H,K} & & \\
& (\cdot)\{\chi \} & \chi_*  & & & &-\chi^* & & &  -d_{F,K}  & \\
& \psi_{\sigma,\tau,\upsilon}  & & & & & \tau_*\upsilon_* & \sigma^*\upsilon_* & (\sigma \tau)^* & -Id & d_{F,H}
 \end{array} \right] \]

\noindent where $\psi_{\sigma,\tau,\upsilon} := \upsilon_*(\cdot)\{\sigma, \tau\} + (\cdot)\{\sigma \tau, \upsilon\}$
\end{example}

Now, observe that in this example, we made a choice:  we computed the bottom row, and in paricular the map $\psi_{\sigma,\tau,\upsilon}$ by left-associating $\sigma$, $\tau$ and $\upsilon$.  Had we right-associated them, the bottom row, other than $\psi_{\sigma,\tau,\upsilon}$ would have remained the same (though it would have been more natural to write
$(\tau \upsilon)_*$, $\upsilon_* \sigma^*$ and $\tau^* \sigma^*$ as the names for the other maps).  The second entry, rather than being $\psi_{\sigma,\tau,\upsilon}$ would have been

\[ \tilde{\psi}_{\sigma,\tau,\upsilon} : = \sigma^*(\cdot )\{\tau, \upsilon\} + (\cdot )\{\sigma,
\tau \upsilon\} \].

Now, it is easy to see that the chain maps from 

\[ C^{\bullet+3}({\cal A}) \oplus C^{\bullet+3}({\cal B})\oplus C^{\bullet+2}(F) \oplus 
C^{\bullet+2}(G) \oplus C^{\bullet+2}(H) \oplus C^{\bullet+2}(K)  \] 
\[\oplus C^{\bullet+1}(F,G) \oplus C^{\bullet+1}(G,H)
\oplus C^{\bullet+1}(H,K) \oplus C^{\bullet+1}(F,K) \]

\noindent to $C^{\bullet}(F,H)$ given by the entries of the bottom row other than $d_{F,H}$
are not equal, and their difference has a single non-zero entry $\psi_{\sigma,\tau,\upsilon} - \tilde{\psi}_{\sigma,\tau,\upsilon}$.  There is however a contracting homotopy for this difference, given by the map with all by the second entry zero, and that entry given by
$(\cdot )\{\sigma, \tau, \upsilon\}$, due to the relationship between the brace and coboundary discovered in the classical case by Gerstenhaber and Voronov \cite{GV} and seen to apply in the categorical setting in \cite{Yetter}.  Thus in the homotopy category (or the derived category, if one prefers) our choice of left-association was a matter of indifference.

It is easy to compute more examples in which no 1-dimensional compositions occur, for instance a ``pillow'' with a pentagonal cross-section with a triple composition equal to a double composition, by combining the techniques of the last three examples.

Rather than write this out explicitly, we now turn to examples involving 1-dimensional composition:

\begin{example} \label{equalspost1comp}
Consider the pasting diagram consisting of three categories $\cal A$, $\cal B$ and $\cal C$, three functors $F,G:{\cal A} \rightarrow {\cal B}$ and $H:{\cal B}\rightarrow {\cal C}$, two natural transformations $\sigma:F \Rightarrow G$ and $\tau:H(F) \Rightarrow H(G)$ and a single 3-cell enforcing the condition that $H(\sigma) = \tau$.

The deformation complex of the pasting diagram is given by

\[ C^{\bullet+3}({\cal A}) \oplus C^{\bullet+3}({\cal B}) \oplus C^{\bullet+3}({\cal C}) \oplus C^{\bullet+2}(F) \oplus 
C^{\bullet+2}(G) \oplus C^{\bullet+2}(H)   \] 
\[ \oplus C^{\bullet+1}(F,G) \oplus C^{\bullet+1}(H(F),H(G))
 \oplus C^{\bullet}(H(F),H(G))\]

\noindent with coboundary given by

{\footnotesize
\[ \left[
    \begin{array}{ccccccccc}
-d_{\cal A} & & & & & & & & \\
& -d_{\cal B} & & & & & & &  \\
& & -d_{\cal C} & & & & & &  \\
-F_* & F^* & & d_F & & & & & \\
-G_* & G^* & & & d_G & & & & \\
& -H_* & H^* & & & d_H & & & \\
& (\cdot)\{\sigma\} & & \sigma_* & -\sigma^* & & -d_{F,G} & & \\
& & (\cdot)\{\tau\} & \tau_*(H_*) & -\tau^*(H_*) & -\tau^*(G^*) + \tau_*(F^*) & & -d_{(H(F),H(G))} & \\
& & & & & (\cdot)\{\sigma\} & H_* & -Id & d_{(H(F),H(G))} 
 \end{array} \right]  \] }

\end{example}

\begin{example} \label{equalspre1comp}
Consider the pasting diagram consisting of three categories $\cal A$, $\cal B$ and $\cal C$, three functors $F:{\cal A} \rightarrow {\cal B}$ and $G,H:{\cal B}\rightarrow {\cal C}$, two natural transformations $\sigma:G \Rightarrow H$ and $\tau:G(F) \Rightarrow H(F)$ and a single 3-cell enforcing the condition that $\sigma_F= \tau$.

The deformation complex of the pasting diagram is given by

\[ C^{\bullet+3}({\cal A}) \oplus C^{\bullet+3}({\cal B}) \oplus C^{\bullet+3}({\cal C}) \oplus C^{\bullet+2}(F) \oplus 
C^{\bullet+2}(G) \oplus C^{\bullet+2}(H)   \] 
\[ \oplus C^{\bullet+1}(F,G) \oplus C^{\bullet+1}(G(F),H(F))
 \oplus C^{\bullet}(G(F),H(G))\]

\noindent with coboundary given by

{\footnotesize
\[ \left[
    \begin{array}{ccccccccc}
-d_{\cal A} & & & & & & & & \\
& -d_{\cal B} & & & & & & &  \\
& & -d_{\cal C} & & & & & &  \\
-F_* & F^* & & d_F & & & & & \\
& -G_* & G^* & & d_G & & & & \\
& -H_* & H^* & & & d_H & & & \\
& & (\cdot)\{\sigma\} & & \sigma_* & -\sigma^* &  -d_{F,G} & & \\
& & (\cdot)\{\tau\} & \tau_*(G_*) & -\tau^*(H_*) & \tau_*(F^*) - \tau^*(F^*) & & -d_{(G(F),H(F))} & \\
& & & & &  & F_* & -Id & d_{(G(F),H(F))} 
 \end{array} \right]  \] }

\end{example}


\begin{example}
Consider the pasting diagram consisting of three categories $\cal A$, $\cal B$ and $\cal C$, four functors $F,G:{\cal A} \rightarrow {\cal B}$, $H:{\cal B}\rightarrow {\cal C}$ and $K:{\cal A}\rightarrow {\cal C}$, and three natural transformations $\sigma:F \Rightarrow G$, $\tau: GH \Rightarrow K$ and $\upsilon:FH \rightarrow K$, with the obvious 0-, 1- and 2- cells and a single 3-cell enforcing the condition that $H(\sigma)\tau = \upsilon$.

The deformation complex of the pasting diagram is given by

\[ C^{\bullet+3}({\cal A}) \oplus C^{\bullet+3}({\cal B}) \oplus C^{\bullet+3}({\cal C}) \oplus C^{\bullet+2}(F) \oplus 
C^{\bullet+2}(G) \oplus C^{\bullet+2}(H) \oplus C^{\bullet+2}(K)  \] 
\[ \oplus C^{\bullet+1}(F,G) \oplus C^{\bullet+1}(H(G),K)
\oplus C^{\bullet+1}(H(F),K)  \oplus C^{\bullet}(H(F),K)\]

\noindent with coboundary given by

{\footnotesize
\[ \left[
    \begin{array}{ccccccccccc}
-d_{\cal A} & & & & & & & & & &\\
& -d_{\cal B} & & & & & & & & & \\
& & -d_{\cal C} & & & & & & & & \\
-F_* & F^* & & d_F & & & & & & &\\
-G_* & G^* & & & d_G & & & & & &\\
& -H_* & H^* & & & d_H & & & & &\\
-K_* & & K^* & & & & d_K & & & & \\ 
& (\cdot)\{\sigma\} & & \sigma_* & -\sigma^* & & & -d_{F,G} & & &  \\
& &  (\cdot)\{\tau\} &  &\tau_* (H_*) & \tau_*(G^*) & -\tau^* & & -d_{H(G),K} & & \\
& &  (\cdot)\{\upsilon\} & \upsilon_* (H_*) &  & \upsilon_*(F^*) & -\upsilon^* & & & -d_{H(F),K} &  \\
& & \zeta_{H,\sigma, \tau} & & & \tau_*( \cdot\{\sigma\}) & & \tau_*(H_*) & [H_*(\sigma)]^* & -Id & d_{H(F),K} 
 \end{array} \right]  \] }

\noindent where $\zeta_{H,\sigma, \tau} := (\cdot)\{H_*(\sigma),\tau\}$

\end{example}

\begin{example}
Consider the pasting diagram consisting of three categories $\cal A$, $\cal B$ and $\cal C$, four functors $F:{\cal A} \rightarrow {\cal B}$, $G,H:{\cal B}\rightarrow {\cal C}$ and $K:{\cal A}\rightarrow {\cal C}$, and three natural transformations $\sigma:FH \Rightarrow K$, $\tau: G \Rightarrow H$ and $\upsilon:FG \rightarrow K$, with the obvious 0-, 1- and 2- cells and a single 3-cell enforcing the condition that $\tau_F\sigma = \upsilon$.

The deformation complex of the pasting diagram is given by

\[ C^{\bullet+3}({\cal A}) \oplus C^{\bullet+3}({\cal B}) \oplus C^{\bullet+3}({\cal C}) \oplus C^{\bullet+2}(F) \oplus 
C^{\bullet+2}(G) \oplus C^{\bullet+2}(H) \oplus C^{\bullet+2}(K)  \] 
\[ \oplus C^{\bullet+1}(H(F),K) \oplus C^{\bullet+1}(G,H)
\oplus C^{\bullet+1}(H(G),K)  \oplus C^{\bullet}(H(G),K)\]

\noindent with coboundary given by

{\footnotesize
\[ \left[
    \begin{array}{ccccccccccc}
-d_{\cal A} & & & & & & & & & &\\
& -d_{\cal B} & & & & & & & & & \\
& & -d_{\cal C} & & & & & & & & \\
-F_* & F^* & & d_F & & & & & & &\\
& -G_* & G^* & & d_G & & & & & &\\
& -H_* & H^* & & & d_H & & & & &\\
-K_* & & K^* & & & & d_K & & & & \\  
& &(\cdot)\{\sigma\} & \sigma_*(H_*) & &\sigma_*(F^*) & -\sigma^* & -d_{H(F),K} & & &  \\ 
& &  (\cdot)\{\tau\} &  &\tau_* &  -\tau^* & & & -d_{G,H} & & \\ 
& &  (\cdot)\{\upsilon\} & \upsilon_* (G_*) &  \upsilon_*(F^*) & & -\upsilon^* & & & -d_{H(G),K} &  \\
 & & \xi_{H,\sigma, \tau} & & & & & [F^*(\tau)]^* & \sigma_*(F^*) & -Id & d_{H(F),K} 
 \end{array} \right]  \]  }

\noindent where $ \xi_{F,\sigma, \tau} := (\cdot)\{F_*(\tau),\sigma\}$

\end{example}

\begin{example}
Consider the pasting diagram consisting of three categories $\cal A$, $\cal B$ and $\cal C$, four functors $F:{\cal A} \rightarrow {\cal C}$, $G,H:{\cal A}\rightarrow {\cal B}$ and $K:{\cal B}\rightarrow {\cal C}$, and three natural transformations $\sigma:F \Rightarrow GK$, $\tau: G \Rightarrow H$ and $\upsilon:F \rightarrow HK$, with the obvious 0-, 1- and 2- cells and a single 3-cell enforcing the condition that $\sigma K(\tau) = \upsilon$.

The deformation complex of the pasting diagram is given by

\[ C^{\bullet+3}({\cal A}) \oplus C^{\bullet+3}({\cal B}) \oplus C^{\bullet+3}({\cal C}) \oplus C^{\bullet+2}(F) \oplus 
C^{\bullet+2}(G) \oplus C^{\bullet+2}(H) \oplus C^{\bullet+2}(K)  \] 
\[ \oplus C^{\bullet+1}(F,K(G)) \oplus C^{\bullet+1}(G,H)
\oplus C^{\bullet+1}F, K(H))  \oplus C^{\bullet}(F, K(H))\]

\noindent with coboundary given by

{\footnotesize
\[ \left[
    \begin{array}{ccccccccccc}
-d_{\cal A} & & & & & & & & & &\\
& -d_{\cal B} & & & & & & & & & \\
& & -d_{\cal C} & & & & & & & & \\
-F_* & & F^* & d_F & & & & & & &\\
-G_* & G^* & & & d_G & & & & & &\\
-H_* & H^* & & & & d_H & & & & &\\
& -K_* & K^* & & & & d_K & & & & \\ 
& & (\cdot)\{\sigma\} & \sigma_* & -\sigma^*(K_*) & & -\sigma^*(G^*) & -d_{F,G} & & &  \\
&  (\cdot)\{\tau\} &  & & \tau_* & -\tau^* &  && -d_{H(G),K} & & \\  
& &  (\cdot)\{\upsilon\} & \upsilon_* &  & -\upsilon^*(K_*) & -\upsilon^*(H^*) & & & -d_{H(F),K} &  \\
& & \eta_{K,\sigma, \tau} & & & & \sigma^*( \cdot\{\tau\}) & [K_*(\tau)]_* & \sigma^*(K_*) & -Id & d_{H(F),K} 
 \end{array} \right]  \] }

\noindent where $\eta_{K,\sigma, \tau} := (\cdot)\{\sigma,K_*(\tau)\}$

\end{example}

\begin{example}
Consider the pasting diagram consisting of three categories $\cal A$, $\cal B$ and $\cal C$, four functors $F:{\cal A} \rightarrow {\cal C}$, $G:{\cal A}\rightarrow {\cal B}$ and $H,K:{\cal B}\rightarrow {\cal C}$, and three natural transformations $\sigma:F \Rightarrow GH$, $\tau: H \Rightarrow K$ and $\upsilon:F \rightarrow GK$, with the obvious 0-, 1- and 2- cells and a single 3-cell enforcing the condition that $\sigma(\tau_G) = \upsilon$.

The deformation complex of the pasting diagram is given by

\[ C^{\bullet+3}({\cal A}) \oplus C^{\bullet+3}({\cal B}) \oplus C^{\bullet+3}({\cal C}) \oplus C^{\bullet+2}(F) \oplus 
C^{\bullet+2}(G) \oplus C^{\bullet+2}(H) \oplus C^{\bullet+2}(K)  \] 
\[ \oplus C^{\bullet+1}(F,H(G)) \oplus C^{\bullet+1}(H,K)
\oplus C^{\bullet+1}F, K(G))  \oplus C^{\bullet}(F, K(G))\]

\noindent with coboundary given by

{\footnotesize
\[ \left[
    \begin{array}{ccccccccccc}
-d_{\cal A} & & & & & & & & & &\\
& -d_{\cal B} & & & & & & & & & \\
& & -d_{\cal C} & & & & & & & & \\
-F_* & & F^* & d_F & & & & & & &\\
-G_* & G^* & & & d_G & & & & & &\\ 
& -H_* & H^* & & & d_H & & & & &\\
& -K_* & K^* & & & & d_K & & & & \\ 
& & (\cdot)\{\sigma\} & \sigma_* & -\sigma^*(H_*) &  -\sigma^*(G^*) & & -d_{F,H(G)} & & &  \\
& & (\cdot)\{\tau\} &  & & \tau_* & -\tau^* &  & -d_{H,K} & & \\ 
& &  (\cdot)\{\upsilon\} & \upsilon_* &  -\upsilon^*(K_*) &  & -\upsilon^*(G^*) & & & -d_{F,K(G)} &  \\  
& & \kappa_{G,\sigma, \tau} & & & & & [G^*(\tau)]_* & \sigma^*(G^*) & -Id & d_{F,K(G)} 
 \end{array} \right]  \] }

\noindent where $\kappa_{G,\sigma, \tau} := (\cdot)\{\sigma,G^*(\tau)\}$

\end{example}


\section{Constructing Larger Deformation Complexes from Smaller}

First, observing that the map $\wp^\sigma_D$ is simply the identity in the case where the composable pasting diagram $D$ is simply the 2-cell $\sigma$ (with its sources and targets), and that the (dual) mapping cylinder on a map is the cone on the direct sum of the map and the identity on its target we have

\begin{proposition}
If $D$ is a pasting diagram with a single 3-cell the boundary of which has domain (resp. codomain) which is a composable pasting diagram $D^\prime$ and codomain (resp. domain) which is a single 2-cell, then ${\frak C}^\bullet(D)$ is quasi-isomorphic to the (dual) mapping cylinder of the map of Proposition 8.3 of \cite{Yetter} induced by the composition.
\end{proposition}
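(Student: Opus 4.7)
(Proof plan)
The plan is to imitate the argument for Proposition \ref{babyqi}: identify ${\frak C}^\bullet(D)$ directly with the ``specific complex'' appearing as the right-hand side of the Lemma used to prove that proposition, after which the Lemma hands over the desired quasi-isomorphism with the dual mapping cylinder of the composition-induced chain map $\wp^\sigma_D$.

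To set up the identification, I would first decompose $D$ into the composable sub-pasting diagram $D'$, the sub-pasting diagram $D^\sigma$ consisting of $\sigma$ together with its iterated sources and targets, and the single 3-cell. Reading off ${\frak C}^\bullet(D)$ from this decomposition, the complex is the sum of ${\frak C}^\bullet(D')$ and ${\frak C}^\bullet(D^\sigma)$ (with their common sub-complex of iterated source/target summands identified), plus one additional copy of $C^\bullet(F,G)$ contributed by the 3-cell (where $F,G$ are the source and target of $\sigma$), shifted appropriately. As illustrated by Examples \ref{triangularpillow}--\ref{equalspre1comp}, the row of the coboundary corresponding to this extra $C^\bullet(F,G)$ consists precisely of $\wp^\sigma_D$ on the $D'$ pieces together with $-\mathrm{Id}$ on the $C^\bullet(F,G)$ summand already sitting inside ${\frak C}^\bullet(D^\sigma)$, where $\wp^\sigma_D$ is assembled from the maps of Propositions \ref{chainmapfrom2comp}, \ref{chainmapfrom1precomp} and \ref{chainmapfrom1postcomp} according to the combinatorial structure of $D'$. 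In the degenerate case $D'=\{\sigma\}$ this reduces, as noted above, to $\mathrm{Id}-\mathrm{Id}=0$, consistent with the trivial effect of a 3-cell enforcing $\sigma=\sigma$.

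With this structure in hand, ${\frak C}^\bullet(D)$ has exactly the shape of the target complex of the Lemma, with $\mathrm{cone}(f)$ identified with (the non-shared part of) ${\frak C}^\bullet(D')$, $\mathrm{cone}(g)$ with ${\frak C}^\bullet(D^\sigma)$, and $\phi$ with $\wp^\sigma_D$; the Lemma then delivers the claimed quasi-isomorphism. The main obstacle lies in matching ${\frak C}^\bullet(D')$ to the ``$\mathrm{cone}(f)$'' side of the Lemma when $D'$ involves many layers of 1- and 2-dimensional compositions, in which case ${\frak C}^\bullet(D')$ is a nested cone rather than a single one, and the hypothesis of the Lemma that $\phi$ restrict to $\mathrm{Id}_C$ on the bottom cone summand must be checked at each level. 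The cleanest remedy is probably an induction on the number of compositions in $D'$: at each stage one peels off a single composition via the relevant Proposition from Section~2 and applies the Lemma to collapse the resulting cone-on-cone structure, taking Proposition \ref{babyqi} as the base case.
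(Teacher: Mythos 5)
Your plan matches the paper's (very terse) justification of this proposition: the paper derives it from exactly the observation that ${\frak C}^\bullet(D)$ is the cone on the direct sum of the composition-induced map $\wp^\sigma_D$ and (minus) the identity on $C^\bullet(F,G)$, so that the Lemma on dual mapping cylinders of maps between cones used for Proposition \ref{babyqi} applies. Your identification of the 3-cell row with $[\wp^\sigma_D,\ -\mathrm{Id}]$ and your appeal to that Lemma are precisely the intended argument; the closing induction you propose is not needed (and does not appear in the paper), since one can take $C$ in the Lemma to be only the shared iterated-source/target summands and absorb the rest of the nested cone structure of ${\frak C}^\bullet(D^\prime)$ into the $A^\bullet$ term.
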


Observe this is a generalization of Proposition \ref{babyqi} and in special instances relates the (dual) mapping cylinder of the map in Proposition \ref{chainmapfrom1postcomp} (resp. \ref{chainmapfrom1precomp}) to the complex of Example \ref{equalspost1comp} (resp. \ref{equalspre1comp}).

The following is a trivial observation about the deformation complex constructed in Proposition 8.3 of \cite{Yetter} and the remarks following:

\begin{proposition} \label{uniontopushout}
If $D = D_1 \cup D_2$ is a pasting diagram which is the union of pasting diagrams $D_1$ and $D_2$, then the deformation complex ${\frak C}^\bullet(D)$ is the pushout of the induced inclusions of deformation complexes ${\frak C}(\iota_i)$ for  $\iota_i: D_1\cap D_2 \rightarrow D_i ,$ $i = 1,2$.
\end{proposition}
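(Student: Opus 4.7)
The plan is to verify this degree-wise, using the direct-sum decomposition of the deformation complex indexed by cells of the pasting diagram. The construction reviewed in Section 2 (and given in Proposition 8.3 of \cite{Yetter}) assigns to any composable pasting diagram $P$ a cochain group ${\frak C}^n(P) = \bigoplus_{c \in P} C^{n+\dim(c)}(\mbox{data of } c)$, where each summand depends only on the source/target data attached to the cell $c$, together with a coboundary operator whose matrix entries are built (via Propositions 4.5, 4.6, and 4.7 of \cite{Yetter} and the chain maps of Section 2) cell-by-cell, each entry depending only on the two cells it connects together with their relative boundary relation.

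First I would show that for any inclusion of pasting diagrams $\iota: P \hookrightarrow Q$, the induced map ${\frak C}(\iota)$ is, in each degree, simply the inclusion of those direct summands indexed by cells of $P$ into the direct sum indexed by cells of $Q$.  Because the coboundary entries are determined locally by pairs of cells and their boundary data, no such entry in ${\frak C}^\bullet(Q)$ restricted to these summands differs from the corresponding entry in ${\frak C}^\bullet(P)$, confirming that ${\frak C}(\iota)$ is a cochain map and is precisely the expected summand inclusion.

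Given this, the pushout of the two inclusions ${\frak C}(\iota_i): {\frak C}^\bullet(D_1 \cap D_2) \to {\frak C}^\bullet(D_i)$ is computed degree-wise in the abelian category of cochain complexes as a pushout of inclusions of direct summands along a common subsummand.  Such a pushout is just the direct sum indexed by the union of the two indexing sets of cells.  Since $D = D_1 \cup D_2$ means precisely that the cells of $D$ are the union of those of $D_1$ and $D_2$, with the common cells being those of $D_1 \cap D_2$, this direct sum matches ${\frak C}^n(D)$ in each degree $n$.  The coboundary on the pushout agrees with that of ${\frak C}^\bullet(D)$ again by locality: every nonzero matrix entry involves two cells that together lie in some $D_i$, so the entry is inherited from ${\frak C}^\bullet(D_i)$.

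The only point requiring mild care --- and the reason the author terms this a \emph{trivial observation} rather than an outright tautology --- is ensuring that for each cell $c$ of $D$, all of its iterated sources and targets lie together in the same $D_i$ that contains $c$.  This is implicit in the hypothesis that $D_1$ and $D_2$ are themselves pasting diagrams whose union is the pasting diagram $D$: a sub-pasting diagram is closed under taking iterated sources and targets of its cells.  Granting this closure property, every coboundary entry of ${\frak C}^\bullet(D)$ is determined by a subdiagram of one of the $D_i$, and the identification of the pushout with ${\frak C}^\bullet(D)$ is immediate.
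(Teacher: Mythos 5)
Your proof is correct and is essentially the argument the paper intends: the paper offers no written proof at all, calling the proposition ``a trivial observation'' about the cell-indexed direct-sum construction of Proposition 8.3 of \cite{Yetter}, and your degree-wise identification of the pushout of summand inclusions with the direct sum over the union of cells, plus the locality of the coboundary entries, is exactly the content being taken for granted. Your closing remark that each $D_i$ is closed under iterated sources and targets (so that every coboundary entry is already determined within one of the $D_i$) is the right point to make explicit; the only further caveat one might add is that, since the coboundary entries attached to $3$-cells depend on a choice of association for the pasting compositions in their boundaries, the identification holds on the nose only when these choices are made compatibly across $D_1\cap D_2$, $D_1$, $D_2$ and $D$.
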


\section{The Polygonal Method}

In \cite{Shrestha} Shrestha developed a method of using polygons with edges labeled by arrow-valued operations to simultaneously encode cocycle conditions, the formulas for obstructions, and the condition that the next term of a deformation cobound the obstruction.  Suitable cell-decompositions of the surface of a 2-sphere into such polygons and ``trivial'' polygons which encode tautologous equalities in place of cocycle conditions and zero (as a difference of identical sums) in place of an obstruction, then provide a convenient method of proof for standard obstructions-are-cocycle results.

As an example, consider the cocycle conditions, formulas for obstructions, and cobounding conditions in the case of the deformation of composition, writing the undeformed composition as $\mu^{(0)}$ to match the notation for the degree $n$ deformation term as $\mu^{(n)}\epsilon^n$:

The cocycle condition is, of course,

\begin{eqnarray*}
 \lefteqn{\delta (\mu^{(1)})(a,b,c)} \\
 & = & \mu^{(0)}(\mu^{(1)}(a,b),c) -\mu^{(0)}(a,\mu^{(1)}(b,c)) + \mu^{(1)}(\mu^{(0)}(a,b),c)-\mu^{(1)}(a,\mu^{(0)}(b,c)) \\
 & = & \sum_{i+j = 1, i,j \in \{0,1\}} \mu^{(i)}(\mu^{(j)}(a,b),c) -\mu^{(i)}(a,\mu^{(j)}(b,c)) \\
 & = & 0
\end{eqnarray*}

The formula for the degree $n$ obstruction is

\[ \omega^{(n)}(a,b,c) = \sum_{i+j = n, i,j \in \{0,\ldots, n-1\}} \mu^{(i)}(\mu^{(j)}(a,b),c) -\mu^{(i)}(a,\mu^{(j)}(b,c)) \].

And the condition that $\mu^{(n)}$ cobounds $\omega^{(n)}$ is

\begin{eqnarray*}
 \omega^{(n)}(a,b,c) - \delta(\mu^{(n))}(a,b,c) & = & \sum_{i+j = n, i,j \in \{0,\ldots, n\}} \mu^{(i)}(\mu^{(j)}(a,b),c) -\mu^{(i)}(a,\mu^{(j)}(b,c)) \\
& = & 0 \\
\end{eqnarray*}

 Now consider the square, with oriented edges labeled by the compositions occuring in the expression of the associativity of composition:

\[\xymatrixcolsep{4pc}\xymatrixrowsep{4pc}
\xymatrix{\ar[r]^{a(bc)} & \circ\\
.\ar[r]_{ab}\ar[u]^{bc} & \ar[u]_{(ab)c.}}.
\]\\

Each of the above formulas can be obtained from (or represented by) the square by adding
all possible terms obtained by assigning degrees to the edges in such a way that the degrees
are chosen from a particular set and the sum of degrees along each of the oriented parts of the boundary has a specified value, and taking the difference of the expressions thus obtained for each of the oriented parts of the boundary. 

Gerstenhaber's proof \cite{G} that obstructions to the deformation of an associative algebra are Hochschild cocycles can then be described in terms of the cube below by first noting that the formula for the coboundary of the degree $n$ obstruction can be written as a signed sum of the obstruction-type expressions (with the edge-degrees ranging from $0$ to $n-1$), five faces representing the usual terms of the coboundary, and one representing $0$, ``prolonged'' by pre- or post-composing with the degree $0$ label on another edge so that all of the expressions represent compositions of terms from the deformation of an iterated composition of all four maps, and thus, all representing parallel maps, can all be added.

\[\xymatrixcolsep{4pc}\xymatrixrowsep{5pc}
\xymatrix{ & .\ar[rrr]_{(\overline{ab}c)d} & & & \odot \\
.\ar[ru]_{\overline{ab}c}\ar[rrr]^{cd}& &&.\ar[ru]^{\overline{ab}(cd)} & \\
&&&&\\
& .\ar@{-->}[uuu]_{a(bc)}\ar@{-->}[rrr]^{(bc)d} &  & & .\ar[uuu]^{a(b\overline{cd})}\\
\circ\ar[uuu]_{ab}\ar@{-->}[ru]_{bc}\ar[rrr]^{cd} & & & .\ar[ru]^{b\overline{cd}}\ar[uuu]^{ab}&
}
\]\\

The proof then proceeds by iteratively ``clearing'' edges shared by two squares with the same sign by rewriting terms involving that edge's label contributed by one square using the cobounding and cocycle condition from the other square sharing the edge until the expression is reduced to two copies of the obstruction-type expression on an equatorial hexagon, with opposite signs.

This observation, that Gerstenhaber's proof can be encoded by such a figure, then motivates a sequence of definitions and lemmas that allow for similar encoding of more general and complex proofs of the same sort.  Shrestha's technique \cite{Shrestha} is most quickly and rigorously described by labeling 1-cells of certain pasting schemes and computads (cf. \cite{Street}, \cite{Power}, \cite{Yetter}) with well-formed expressions in a particular (essentially) algebraic theory:

\begin{definition}
A {\em directed polygon} $P$ is a 2-computad with a single 2-cell, all of whose 0- and 1-cells lie in the boundary of the 2-cell.

A {\em whiskered polygon} $W$ is a 2-computad with a single 2-cell $P$, whose underlying 1-computad is the union of two 1-dimensional pasting schemes (directed paths), with the same 0-domain and 0-codomain, whose underlying cell complex is contractible.  We refer to the 1-dimensional pasting scheme consisting of the path from the 0-domain of  $W$ to the 0-domain of $P$ as the {\em domain whisker} (note: it may be simply a 0-cell), and the 1-dimensional pasting scheme consisting of the path from the 0-codomain of $P$ to the 0-codomain of $W$ as the {\em codomain whisker} (again it may simply be a 0-cell).

A {\em tiled sphere} is a 3-computad with a single 3-cell, all of whose 0-, 1- and 2-cells lie in the boundary of the 3-cell. 
\end{definition}

In applying the method in a given circumstance, one needs to label the edges of tiled spheres with arrow-valued operations from a theory associated to the pasting diagram (or more general structure).  For the present application the following suffice:

\begin{definition} \label{theory_of_diagram}
To any pasting diagram $D$, {\em theory of }$D$, ${\Bbb T}(D)$, is the  essentially algebraic theory with
types $O_{\cal C}$ and $A_{\cal C}$ for each category $\cal C$ in the diagram (the objects of $\cal C$ and the arrows of $\cal C$ respectively) and operations 

\begin{itemize}
\item $Id_{\cal C}(-)$ of arity $O_{\cal C}$ and type $A_{\cal C}$, 
\item $s_{\cal C}$ and $t_{\cal C}$ of arity $A_{\cal C}$ and type $O_{\cal C}$, and
\item $m_{\cal C}$ of arity  $A_{{\cal C} \;t}\!\! \times_s A_{\cal C}$ and type $A_{\cal C}$
\end{itemize}

\noindent for each category $\cal C$ in $D$;

\begin{itemize}
\item $F_O$ of arity $O_{\cal C}$ and type $O_{\cal D}$ and
\item $F_A$ of arity $A_{\cal C}$ and type $A_{\cal D}$
\end{itemize}

\noindent for each functor $F:\cal C \rightarrow \cal D$ in $D$; and 

\begin{itemize}
\item $\sigma$ of arity $O_{\cal C}$ and type $A_{\cal D}$ 
\end{itemize}

\noindent for each natural transformation $\sigma:F \Rightarrow G$ for $F,G:\cal C \rightarrow \cal D$ in $D$.

And, with axioms the equations expressing the axioms of categories for each 6-tuple $(O_{\cal C}, A_{\cal C}, Id_{\cal C}, s_{\cal C}, t_{\cal C}, m_{\cal C})$, the functoriality of each $(F_O, F_A)$ and the naturality of each $\sigma$.

\end{definition}

We term operations of type $A_{\cal C}$ for any $\cal C$ ``arrow-valued'' and those of type $O_{\cal C}$ for any $\cal C$ `` object-valued'', and refer to the model of ${\Bbb T}(D)$ given by $D$ as the ``tautologous model''.  In other applications of the method, as in \cite{Shrestha} ${\Bbb T}(D)$ may be replaced with an extension of the theory (for instance including operations of arity $A_{\cal C} \times A_{\cal C}$ and type $A_{{\cal C}\boxtimes {\cal C}}$ encoding monoidal product of two arrows).  We conjecture that broader generalizations, for instance to deformations of $n$-tuple categories and (weak) $n$- categories with a $k$-linear structure on their $n$-arrows, and to suitable pasting diagrams of these, or even to models of other sorts of essentially algebraic theories (cf. \cite{FS}) with appropriate linearizations of parts of the structure, can be described, but do not pursue this possibility here.

Notice a subtlety in the description of the axioms of ${\Bbb T}(D)$ in Definition \ref{theory_of_diagram}:  the only axioms are those inherited from the axioms of categories, functors and natural transformations.  In the Definition \ref{parallels}, on the contrary, all of the equations which hold in the diagram $D$ are enforced.

\begin{definition} \label{parallels}

The {\em theory of parallels} ${\Bbb P}(D)$ for a pasting diagram $D$ is the essentially algebraic theory with the same types and object valued operations as ${\Bbb T}(D)$ and with arrow-valued operations given by all set functions $\pi$ with the same domain and codomain as the instantiation of an arrow-valued operation $\omega$ of ${\Bbb T}(D)$ in the tautologous model and satisfying
$s(\pi) = s(\omega)$ and $t(\pi) = t(\omega)$  (we call $\pi$ {\em a parallel} for $\omega$), and all equations that hold among (iterated generalized) compositions of these functions as axioms.

\end{definition}

Notice in neither definition did we include any addition or scalar multiplication operations induced by the $k$-linear structure on the categories, even though we are only applying these constructions to pasting diagrams of $k$-linear categories, $k$-linear functors and natural transformations.   However, when applied to such a pasting diagram, the vector space structure on the hom-sets induces a vector space structure on the set of parallels for any arrow-valued operation $\omega$ of ${\Bbb T}(D)$.

Given a (countably infinite) stock of variables of each type in the theory it is evident what is meant by a well-formed formula (wff) of the theory.

We now make some technical definitions:

\begin{definition} An {\em equivalent} of a wff is any wff of the same type in the same variables such that for every instantiation of the variables in the two formulas, the values are equal.
\end{definition}

In what follows, for brevity we will refer to well-formed subformulas of a wff as wfss.  When considered as wfss of a fixed wff $W$, repetitions of equal wffs are considered distinct wfss.  With this convention, the following (a generality about wfss of a wff in any formal system) is immediate:

\begin{proposition}
The wfss of a wff $W$ form a partially ordered set $\Sigma(W)$ under $U \leq V$ when $U$ is a wfs of $V$, and the incidence diagram of $\Sigma(W)$ is a tree.  Thus $\Sigma(W)$ admits a natural number valued depth function $d$ for which $d(W) = 0$ and whenever $V$ covers $U$, $d(U) = d(V) + 1$.  The minimal elements (in the order-theoretic sense, not just those of maximum depth) for $\Sigma(W)$ are instances of variables.  We denote the subposet of non-minimal wfss of $W$ by $\Sigma(W)^\circ$
\end{proposition}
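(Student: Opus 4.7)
The plan is to derive all four assertions from the unique readability of well-formed formulas in an essentially algebraic theory. Because each non-variable wfs of $W$ has a canonical parsing as $\omega(X_1,\ldots,X_k)$ for a unique operation symbol $\omega$ and unique wfss $X_1,\ldots,X_k$ (kept distinct by the convention of recording occurrences rather than values), the combinatorics is really just that of the syntactic parse tree of $W$ itself.

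First I would verify the poset axioms. Reflexivity ($W'$ is a wfs of itself) is immediate from the grammar; transitivity (a wfs of a wfs is a wfs) follows by a direct structural induction on the formation of $V$. For antisymmetry, if $U \le V \le U$, a syntactic length count shows $U$ and $V$ have the same number of symbol occurrences, and since $U$ sits inside $V$ at a particular position, the only possibility is that $U$ occupies all of $V$; by the convention that distinct occurrences count as distinct wfss, this forces $U=V$.

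Next I would establish the tree structure of the Hasse diagram. Given any $U \in \Sigma(W)$ with $U \ne W$, consider the smallest wfs of $W$ that properly contains $U$: by unique readability this smallest container exists and is unique, because the set of wfss of $W$ containing $U$ is totally ordered under inclusion (two wfss of $W$ are either disjoint as subexpressions or one is contained in the other). This unique immediate container is the covering element above $U$. Hence the incidence diagram of $\Sigma(W)$ is a rooted tree with root $W$, and every non-root element has a unique cover.

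The depth function is then forced: set $d(W)=0$ and $d(U) = d(\mathrm{parent}(U)) + 1$. This recursion terminates because the syntactic complexity (number of operation and variable symbols) of $U$ strictly decreases as one passes to its parent in the reverse direction, so the path from any $U$ up to $W$ has finite length. The covering condition $d(U) = d(V)+1$ whenever $V$ covers $U$ holds by construction. Finally, the minimal elements of $\Sigma(W)$ are precisely those wfss admitting no proper sub-wfs, which by the grammar of the theory are exactly the instances of variables. The only step with any real content is the uniqueness of the syntactic parent, and that is a standard consequence of unique readability, which is why the author declares the proposition immediate.
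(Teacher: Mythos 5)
Your proof is correct and supplies, via unique readability of terms, exactly the standard argument that the paper leaves implicit by declaring the proposition ``immediate'' as a generality about formal systems. The only point worth flagging is that your identification of the minimal elements with variable instances tacitly uses the fact that every operation of ${\Bbb T}(D)$ has positive arity (there are no constants), which does hold for the theories defined in the paper but is the one place where the statement is not literally a generality about arbitrary formal systems.
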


\begin{definition}
A {\em well-formed labeling} of a whiskered polygon (resp. tiled sphere) is an assignment $e \mapsto \lambda_e$ to each directed 1-cell of arrow-valued wff in ${\Bbb T}(D)$ for some pasting diagram (or an appropriate extension of this theory) with the properties 

\begin{itemize}
\item[WF0] Edges are not labeled with variables (i.e. all labels involve an operation in the theory).

\item[WF1] For each 1-pasting scheme (directed path) $e_1, e_2, \ldots e_n$ from the 0-domain of the whiskered polygon (resp. tiled sphere) the labels $\lambda_{e_1}, \ldots, \lambda_{e_n}$ can be iteratively replaced, beginning at the 0-codomain, with labels $\tilde{\lambda}_{e_1}, \ldots , \tilde{\lambda}_{e_n}$ such that 

\begin{itemize}
\item[WF1a] If the only proper wfss of $\lambda_{e_i}$ are variables then $\tilde{\lambda}_{e_i} = \lambda_{e_i}$.
\item[WF1b] For all $i$  $\tilde{\lambda}_{e_i}$  is equivalent to $\lambda_{e_i}$.
\item[WF1c] In the list of replacement labels $\tilde{\lambda}_{e_1}, \ldots , \tilde{\lambda}_{e_n}$, for each $\tilde{\lambda}_{e_i}$, if $\kappa$ is a wff occuring $m$ times as a non-minimal wfs of $\tilde{\lambda}_{e_i}$, then there exist $m$ distinct $j$'s with  $j < i$ such that $\tilde{\lambda}_{e_j} = \kappa$.
\end{itemize}

\item[WF2] If $e_1, e_2, \ldots e_n$ is a maximal directed path in a whiskered polygon (resp. tiled sphere), the replacement labels $\tilde{\lambda}_{e_1}, \ldots , \tilde{\lambda}_{e_n}$ are precisely the non-minimal wfss of $\tilde{\lambda}_{e_n}$.

\item[WF3] For every 2-cell, and every maximal extension of the 2-cell to a whiskered polygon, the labels of the final edges of the two paths from the 0-domain to the 0-codomain of the whiskered polygon are equivalents.  (Note this condition is vacuously true unless the 0-codomain of the 2-cell is the 0-codomain of the whiskered polygon.)

\end{itemize}
\end{definition}

The following is then immediate:

\begin{proposition}
For any maximal path $e_1, e_2, \ldots e_n$ in a whiskered polygon (resp. tiled sphere) the totally ordered set of wff's $\{\tilde{\lambda}_{e_1} \leq \ldots \leq \tilde{\lambda}_{e_n}\}$ is a totalization of the partially ordered set $\Sigma(\tilde{\lambda}_{e_n})^\circ$.
\end{proposition}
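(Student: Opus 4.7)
The plan is to exhibit a canonical bijection between the edges of the maximal path and the elements of $\Sigma(\tilde{\lambda}_{e_n})^\circ$, and then verify that the path ordering respects the subformula partial order.

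First, by WF2 the multiset of replacement labels $\{\tilde{\lambda}_{e_1}, \ldots, \tilde{\lambda}_{e_n}\}$ coincides with the multiset of values of non-minimal wfss of $\tilde{\lambda}_{e_n}$ (recall that distinct occurrences of the same formula are counted as distinct elements of $\Sigma$). I would promote this multiset equality to a canonical bijection $\Phi:\{e_1,\ldots,e_n\}\to \Sigma(\tilde{\lambda}_{e_n})^\circ$ by a top-down recursion on the subformula tree. Set $\Phi(e_n):=\tilde{\lambda}_{e_n}$ itself, viewed as the root. Suppose $\Phi(e_j)$ has been assigned to an occurrence whose value is $\tilde{\lambda}_{e_j}$; each non-variable wfs sitting immediately inside $\tilde{\lambda}_{e_j}$ is an occurrence of some wff $\kappa$, and WF1c guarantees the existence of \emph{exactly} the required number of distinct indices $k<j$ with $\tilde{\lambda}_{e_k}=\kappa$, so these occurrences can be matched bijectively with such $k$'s. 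Recursing down the subformula tree, every node of $\Sigma(\tilde{\lambda}_{e_n})^\circ$ is filled by exactly one edge of the path, giving the bijection $\Phi$.

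Next I would verify that $\Phi$ turns the path order into a linear extension of the wfs order. Suppose $\Phi(e_i)<\Phi(e_j)$ in $\Sigma(\tilde{\lambda}_{e_n})^\circ$, i.e.\ the occurrence $\Phi(e_i)$ is a proper subformula occurrence of $\Phi(e_j)$. Then $\Phi(e_i)$ lies in the subtree rooted at $\Phi(e_j)$; by the recursive construction every node in that subtree is assigned an edge of index strictly less than $j$, so $i<j$. This is precisely the condition that $e_1<\cdots<e_n$ totalizes $\Sigma(\tilde{\lambda}_{e_n})^\circ$.

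The only delicate point — and the main bookkeeping obstacle — is the distinction between a wff as a syntactic string and an occurrence of a wfs inside a larger wff: one must match WF1c's phrase ``there exist $m$ distinct $j$'s such that $\tilde{\lambda}_{e_j}=\kappa$'' up with the $m$ distinct occurrences of $\kappa$ inside $\tilde{\lambda}_{e_i}$, and carry out this match consistently across the recursion. Once the bijection is set up in this occurrence-aware way, the proposition is essentially tautological: WF2 supplies the underlying set equality and WF1c supplies exactly the order compatibility needed.
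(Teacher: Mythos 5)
Your proof is correct and is exactly the argument the paper has in mind: the paper states this proposition with no proof at all ("The following is then immediate"), the intended immediacy being precisely your combination of WF2 (the replacement labels are, as a multiset, the non-minimal wfss of $\tilde{\lambda}_{e_n}$) with WF1c (each occurrence of a non-minimal wfs is accounted for by an earlier edge, giving the order compatibility). Your occurrence-aware bookkeeping just makes explicit what the paper leaves tacit.
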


\begin{figure}
\begin{center}
\setlength{\unitlength}{4144sp}%
\begingroup\makeatletter\ifx\SetFigFontNFSS\undefined%
\gdef\SetFigFontNFSS#1#2#3#4#5{%
  \reset@font\fontsize{#1}{#2pt}%
  \fontfamily{#3}\fontseries{#4}\fontshape{#5}%
  \selectfont}%
\fi\endgroup%
\begin{picture}(3885,7414)(1816,-7874)
\thinlines
{\color[rgb]{0,0,0}\put(2926,-4201){\vector(-2,-3){750}}
}%
{\color[rgb]{0,0,0}\put(2206,-5356){\vector( 0,-1){1275}}
}%
{\color[rgb]{0,0,0}\put(2206,-6646){\vector( 4,-3){1112}}
}%
{\color[rgb]{0,0,0}\put(3316,-7456){\vector( 1, 0){1530}}
}%
{\color[rgb]{0,0,0}\put(4426,-4186){\vector( 4,-3){1080}}
}%
{\color[rgb]{0,0,0}\put(5536,-5041){\vector( 0,-1){1605}}
}%
{\color[rgb]{0,0,0}\put(5558,-6669){\vector(-1,-1){735}}
}%
{\color[rgb]{0,0,0}\put(2926,-4111){\vector(-2, 3){750}}
}%
{\color[rgb]{0,0,0}\put(2206,-2956){\vector( 0, 1){1275}}
}%
{\color[rgb]{0,0,0}\put(2206,-1666){\vector( 4, 3){1112}}
}%
{\color[rgb]{0,0,0}\put(3316,-856){\vector( 1, 0){1530}}
}%
{\color[rgb]{0,0,0}\put(4426,-4126){\vector( 4, 3){1080}}
}%
{\color[rgb]{0,0,0}\put(5536,-3271){\vector( 0, 1){1605}}
}%
{\color[rgb]{0,0,0}\put(5558,-1643){\vector(-1, 1){735}}
}%
{\color[rgb]{0,0,0}\put(3016,-4156){\vector( 1, 0){1350}}
}%
{\color[rgb]{0,0,0}\put(3001,-4261){\vector( 0,-1){1350}}
}%
{\color[rgb]{0,0,0}\put(2963,-5657){\vector(-3,-4){642}}
}%
{\color[rgb]{0,0,0}\put(3031,-4231){\vector( 1,-1){765}}
}%
{\color[rgb]{0,0,0}\put(3841,-5011){\vector( 1, 0){1560}}
}%
{\color[rgb]{0,0,0}\put(3061,-5641){\vector( 3,-2){675}}
}%
{\color[rgb]{0,0,0}\put(3751,-5056){\vector( 0,-1){975}}
}%
{\color[rgb]{0,0,0}\put(3796,-6091){\vector( 3,-4){945}}
}%
{\color[rgb]{0,0,0}\put(2251,-2986){\vector( 1, 0){1320}}
}%
{\color[rgb]{0,0,0}\put(3946,-1696){\vector( 1, 0){1485}}
}%
{\color[rgb]{0,0,0}\put(3871,-1726){\vector(-2, 3){550}}
}%
{\color[rgb]{0,0,0}\put(3608,-2913){\vector( 1, 4){285}}
}%
{\color[rgb]{0,0,0}\put(4321,-4111){\vector(-2, 3){750}}
}%
\put(1831,-2326){\makebox(0,0)[lb]{\smash{{\SetFigFontNFSS{12}{14.4}{\rmdefault}{\mddefault}{\updefault}{\color[rgb]{0,0,0}$ab$}%
}}}}
\put(2176,-1126){\makebox(0,0)[lb]{\smash{{\SetFigFontNFSS{12}{14.4}{\rmdefault}{\mddefault}{\updefault}{\color[rgb]{0,0,0}$F(ab)$}%
}}}}
\put(2641,-2806){\makebox(0,0)[lb]{\smash{{\SetFigFontNFSS{12}{14.4}{\rmdefault}{\mddefault}{\updefault}{\color[rgb]{0,0,0}$F(a)$}%
}}}}
\put(3871,-2371){\makebox(0,0)[lb]{\smash{{\SetFigFontNFSS{12}{14.4}{\rmdefault}{\mddefault}{\updefault}{\color[rgb]{0,0,0}$F(b)$}%
}}}}
\put(3706,-1246){\makebox(0,0)[lb]{\smash{{\SetFigFontNFSS{12}{14.4}{\rmdefault}{\mddefault}{\updefault}{\color[rgb]{0,0,0}$F(a)F(b)$}%
}}}}
\put(3661,-631){\makebox(0,0)[lb]{\smash{{\SetFigFontNFSS{12}{14.4}{\rmdefault}{\mddefault}{\updefault}{\color[rgb]{0,0,0}$F(ab)F(c)$}%
}}}}
\put(2161,-3706){\makebox(0,0)[lb]{\smash{{\SetFigFontNFSS{12}{14.4}{\rmdefault}{\mddefault}{\updefault}{\color[rgb]{0,0,0}$F(c)$}%
}}}}
\put(5296,-1096){\makebox(0,0)[lb]{\smash{{\SetFigFontNFSS{12}{14.4}{\rmdefault}{\mddefault}{\updefault}{\color[rgb]{0,0,0}$F(a)F(bc)$}%
}}}}
\put(3991,-3361){\makebox(0,0)[lb]{\smash{{\SetFigFontNFSS{12}{14.4}{\rmdefault}{\mddefault}{\updefault}{\color[rgb]{0,0,0}$F(c)$}%
}}}}
\put(3241,-3976){\makebox(0,0)[lb]{\smash{{\SetFigFontNFSS{12}{14.4}{\rmdefault}{\mddefault}{\updefault}{\color[rgb]{0,0,0}$F(a)$}%
}}}}
\put(5041,-3886){\makebox(0,0)[lb]{\smash{{\SetFigFontNFSS{12}{14.4}{\rmdefault}{\mddefault}{\updefault}{\color[rgb]{0,0,0}$bc$}%
}}}}
\put(5686,-2566){\makebox(0,0)[lb]{\smash{{\SetFigFontNFSS{12}{14.4}{\rmdefault}{\mddefault}{\updefault}{\color[rgb]{0,0,0}$F(bc)$}%
}}}}
\put(2071,-4621){\makebox(0,0)[lb]{\smash{{\SetFigFontNFSS{12}{14.4}{\rmdefault}{\mddefault}{\updefault}{\color[rgb]{0,0,0}$F(c)$}%
}}}}
\put(5071,-4501){\makebox(0,0)[lb]{\smash{{\SetFigFontNFSS{12}{14.4}{\rmdefault}{\mddefault}{\updefault}{\color[rgb]{0,0,0}$bc$}%
}}}}
\put(5671,-6061){\makebox(0,0)[lb]{\smash{{\SetFigFontNFSS{12}{14.4}{\rmdefault}{\mddefault}{\updefault}{\color[rgb]{0,0,0}$F(bc)$}%
}}}}
\put(1831,-6046){\makebox(0,0)[lb]{\smash{{\SetFigFontNFSS{12}{14.4}{\rmdefault}{\mddefault}{\updefault}{\color[rgb]{0,0,0}$ab$}%
}}}}
\put(2161,-7411){\makebox(0,0)[lb]{\smash{{\SetFigFontNFSS{12}{14.4}{\rmdefault}{\mddefault}{\updefault}{\color[rgb]{0,0,0}$F(ab)$}%
}}}}
\put(3541,-7801){\makebox(0,0)[lb]{\smash{{\SetFigFontNFSS{12}{14.4}{\rmdefault}{\mddefault}{\updefault}{\color[rgb]{0,0,0}$F(ab)F(c)$}%
}}}}
\put(5356,-7261){\makebox(0,0)[lb]{\smash{{\SetFigFontNFSS{12}{14.4}{\rmdefault}{\mddefault}{\updefault}{\color[rgb]{0,0,0}$F(a)F(bc)$}%
}}}}
\put(2746,-5131){\makebox(0,0)[lb]{\smash{{\SetFigFontNFSS{12}{14.4}{\rmdefault}{\mddefault}{\updefault}{\color[rgb]{0,0,0}$ab$}%
}}}}
\put(4426,-5281){\makebox(0,0)[lb]{\smash{{\SetFigFontNFSS{12}{14.4}{\rmdefault}{\mddefault}{\updefault}{\color[rgb]{0,0,0}$F(a)$}%
}}}}
\put(2581,-6391){\makebox(0,0)[lb]{\smash{{\SetFigFontNFSS{12}{14.4}{\rmdefault}{\mddefault}{\updefault}{\color[rgb]{0,0,0}$F(c)$}%
}}}}
\put(3496,-4516){\makebox(0,0)[lb]{\smash{{\SetFigFontNFSS{12}{14.4}{\rmdefault}{\mddefault}{\updefault}{\color[rgb]{0,0,0}$bc$}%
}}}}
\put(3841,-5581){\makebox(0,0)[lb]{\smash{{\SetFigFontNFSS{12}{14.4}{\rmdefault}{\mddefault}{\updefault}{\color[rgb]{0,0,0}$a(bc)$}%
}}}}
\put(2971,-6121){\makebox(0,0)[lb]{\smash{{\SetFigFontNFSS{12}{14.4}{\rmdefault}{\mddefault}{\updefault}{\color[rgb]{0,0,0}$(ab)c$}%
}}}}
\put(4201,-6406){\makebox(0,0)[lb]{\smash{{\SetFigFontNFSS{12}{14.4}{\rmdefault}{\mddefault}{\updefault}{\color[rgb]{0,0,0}$F(a(bc))$}%
}}}}
\put(2416,-5491){\makebox(0,0)[lb]{\smash{{\SetFigFontNFSS{12}{14.4}{\rmdefault}{\mddefault}{\updefault}{\color[rgb]{0,0,0}$T$}%
}}}}
\put(2956,-3421){\makebox(0,0)[lb]{\smash{{\SetFigFontNFSS{12}{14.4}{\rmdefault}{\mddefault}{\updefault}{\color[rgb]{0,0,0}$T$}%
}}}}
\put(4111,-4606){\makebox(0,0)[lb]{\smash{{\SetFigFontNFSS{12}{14.4}{\rmdefault}{\mddefault}{\updefault}{\color[rgb]{0,0,0}$T$}%
}}}}
\put(4351,-1981){\makebox(0,0)[lb]{\smash{{\SetFigFontNFSS{12}{14.4}{\rmdefault}{\mddefault}{\updefault}{\color[rgb]{0,0,0}$F(b)F(c)$}%
}}}}
\end{picture}%

\end{center}
\caption{A well-formed labeling of a tiled sphere associated to the deformation of a functor \label{functor_obstructions_are_cocycles}}
\end{figure}

An example of a well-formed labeling of a tiled sphere is given in Figure \ref{functor_obstructions_are_cocycles}. (The boundaries of the two octogons bordering the unbounded region should be identified along the edges with the same labels, and the 3-cell inserted to fill the resulting topological sphere.)   This particular example arises in applying the method to show that the obstructions to deforming (the arrow part of) a functor, when the compositions in its source and target categories are also being deformed, are cocycles.  Note that in the labeling, at one point notation has been abused and $F(abc)$ has been used in place of either of the equivalent wffs $F((ab)c)$ or $F(a(bc))$.  It is also easy to see along which paths the label-replacement axiom WF1 results in non-trival replacements of labels (as, for instance along any path ending in the top right-most edge in which $F(b)F(c)$ occurs, rather than $F(bc)$:  the label of the top right-most edge must be replaced with $F(a)(F(b)F(c))$).

Now each arrow-valued operation is a part of the algebraic structure which is subject to infinitesimal deformation, by being replaced by a formal power-series (or polynomial in $\epsilon$ with $\epsilon^n = 0$ for some $n$) whose coefficients are arrow-valued operations of the same type and arity (e.g. the arrow part of a functor $F:\cal C \rightarrow \cal D$ is replaced with $\tilde{F} = 
\sum F^{(n)}\epsilon^n$ where for each $a \in Arr(\cal C)$, $F^{(n)}(a)$ is a map from $s(F(a))$ to $t(F(a))$).  From this, we abstract

\begin{definition}  Consider any set of arrow-valued operations $\Bbb O$ in ${\Bbb T}(D)$ for some $D$.
Fix $n \in \Bbb N \cup \{\infty\}$.  To each arrow-valued operation $\psi \in \Bbb O$ associate a sequence of parallels $\psi^{(k)}$ in the sense of Definition \ref{parallels} (truncated at $k = n$ if $n < \infty$) with
$\psi^{(0)} = \psi$.
We call $\psi^{(k)}$ {\em the degree $k$ parallel of $\psi$}, and a choice $\Bbb O_n$ of such a sequence for every operation in $\Bbb O$,  {\em a degree $n$ family of parallels for $\Bbb O$}
\end{definition}

Now, for any well-formed labeling of a whiskered polygon or tiled sphere with, let $\Bbb O$ be the set of all arrow-valued operations occuring in the labels of the edges, closed under equivalence.  Every family of parallels for $\Bbb O$, then gives rise to many labelings of the maximal paths of the whiskered polygon or tiled sphere by wffs from ${\Bbb P}(D)$ by replacing the labels along the path using WF1, then for each edge, chosing a degree $k$ and replacing the last-applied operation $\psi$ in the label on that edge with $\psi^{(k)}$ in the label for that edge and all edges later in the path for which the label on the given edge as a wfs.

In particular, any such choice of degrees for each edge along a path from the 0-domain to the 0-codomain creates a new wff in ${\Bbb P}(D)$.  Suppose the sequence of labels on a maximal path was $f_1, f_2, \ldots , f_n$, where each $f_i$ denotes a well-formed expression all of whose iterated inputs have an equivalent among the labels earlier in the sequence, and by abuse of notion also that well-formed expression's last-applied operation. A choice of degrees $i_1, i_2, \ldots , i_n$ then produces a new sequence of well-formed expression $f_1^{(i_1)}, f_2^{(i_2)}, \ldots , f_n^{(i_n)}$ in which each instance of an operation has been replaced with its parallel of the chosen degree in that and all later edge-labels.  Note:in general these new labelings are not well-formed labelings.

At this point, recall that all of this is taking place in a linear setting, so that parallel arrows (and thus parallels) can be added.  We will now define several different expressions which a system of parallels associates to a whiskered polygon.  

\begin{definition} \label{conditions_and_obstructions}

For any well-formed labeling of a whiskered polygon $W$, $\Bbb O$ the set of operations occuring in the replacement labelings of both paths, and $\Bbb O_n$ a family of parallels for $\Bbb O$, let $f_1,\ldots, f_k$ (resp. $g_1,\ldots, g_\ell$) be the replacement labeling on the maximal path which traverses domain (resp. codomain) of the 2-cell. 

The {\em cocycle-type condition} associated to $W$ is the equation

\[ \sum_{\stackrel{i_1,\ldots, i_k  \in  \{0,1\} }{ i_1 + \ldots + i_k =  1}}
 f_1^{(i_1)}, f_2^{(i_2)}, \ldots , f_n^{(i_k)}  - 
\sum_{\stackrel{ j_1,\ldots, j_\ell \in \{0,1\}}{ j_1 + \ldots + j_\ell  =  1}}
 g_1^{(j_1)}, g_2^{(j_2)}, \ldots , g_\ell^{(j_\ell)} = 0. \]

The {\em $m^{th}$ order obstruction-type expression} associated to $W$ is the expression

\[ \sum_{\stackrel{i_1,\ldots, i_k  \in  \{0,\ldots, m-1\} }{ i_1 + \ldots + i_k =  m}}
 f_1^{(i_1)}, f_2^{(i_2)}, \ldots , f_n^{(i_k)}  - 
\sum_{\stackrel{ j_1,\ldots, j_\ell \in \{0,\ldots, m-1\}}{ j_1 + \ldots + j_\ell  =  m}}
 g_1^{(j_1)}, g_2^{(j_2)}, \ldots , g_\ell^{(j_\ell)} . \]

And, the {\em $m^{th}$ order cobounding-type condition} associated to $W$ is the expression

\[ \sum_{\stackrel{i_1,\ldots, i_k  \in  \{0,\ldots, m\} }{ i_1 + \ldots + i_k =  m}}
 f_1^{(i_1)}, f_2^{(i_2)}, \ldots , f_n^{(i_k)}  - 
\sum_{\stackrel{ j_1,\ldots, j_\ell \in \{0,\ldots, m\}}{ j_1 + \ldots + j_\ell  =  m}}
 g_1^{(j_1)}, g_2^{(j_2)}, \ldots , g_\ell^{(j_\ell)} = 0 . \]

\end{definition}

In each case, the string of parallels should be understood as naming the well-formed experssion obtained by the iterated substitution of the named parallel for the corresponding operation in the well-formed expression labeling the last edge of the path.  

The following then provides the basis for Shrestha's method:

\begin{proposition} \label{elementary}
For any well-formed labeling of a tiled sphere $T$ with 3-cell $C$, $\Bbb O$ the set of operations occuring in the replacement labelings of all paths, and $\Bbb O_n$ a family of parallels for $\Bbb O$, the domain (resp. codomain) of $C$ can be expressed as the union of whiskered polygons with the same 0-domain and 0-codomain as $T$, one with each 2-cell in $dom(C)$ (resp. $cod(C)$) as its 2-cell in such a way that the sum of the $m^{th}$ order obstruction-type expressions for the whiskered polygons is the $m^{th}$ order obstruction-type expression for the (whiskered) polygon consisting of a single 2-cell and the union of the 1-pasting schemes $dom(dom(C)) = dom(cod(C))$ and $cod(dom(C)) = cod(cod(C))$.
\end{proposition}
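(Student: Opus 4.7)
The plan is to reduce the claim to a signed telescoping sum by linearizing the 2-pasting scheme $dom(C)$ via a composition order, exhibiting the whiskered polygons explicitly, and then observing that shared ``intermediate'' paths cancel exactly. The argument for $cod(C)$ is identical, so I will give it only for $dom(C)$.

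Since $dom(C)$ is a composable 2-pasting scheme, it admits a \emph{composition order}: an enumeration $P_1,\ldots,P_n$ of its 2-cells together with a sequence $p_0,\ldots,p_n$ of 1-paths from $dom(dom(C))$ to $cod(dom(C))$ such that $p_0 = dom(dom(C))$, $p_n = cod(dom(C))$, and $p_i$ is obtained from $p_{i-1}$ by replacing the contiguous subpath $dom(P_i)$ with $cod(P_i)$. For each $i$, let $W_{P_i}$ be the whiskered polygon whose 2-cell is $P_i$, whose domain whisker is the initial segment of $p_{i-1}$ preceding $dom(P_i)$ (if necessary extended inside the 1-skeleton of $T$ to reach the 0-domain of $T$), and whose codomain whisker is the final segment of $p_{i-1}$ following $dom(P_i)$ (extended analogously). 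Its two maximal paths are then $p_{i-1}$ (traversing the domain of $P_i$) and $p_i$ (traversing the codomain). Because the labeling of $T$ is globally well-formed, the labels restricted to each $W_{P_i}$ remain well-formed: WF0--WF2 are path-local and therefore inherited, and WF3 for the single 2-cell $P_i$ holds because $P_i$ is already a 2-cell of $T$, so the equivalence of the two terminal labels is forced by the well-formedness of the ambient sphere.

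Let $\Omega^{(m)}(p)$ denote the sum
\[ \sum_{\substack{i_1,\ldots, i_k \in \{0,\ldots,m-1\}\\ i_1+\cdots+i_k = m}} f_1^{(i_1)}f_2^{(i_2)}\cdots f_k^{(i_k)} \]
attached to the replacement labeling $f_1,\ldots,f_k$ of a maximal path $p$ together with its ambient whiskers. With this shorthand, the $m^{th}$ order obstruction-type expression attached to $W_{P_i}$ is
\[ E_{P_i} \;=\; \Omega^{(m)}(p_{i-1}) \;-\; \Omega^{(m)}(p_i). \]
The decisive observation is that $\Omega^{(m)}(p_i)$, as it appears on the codomain-path side of $E_{P_i}$ and on the domain-path side of $E_{P_{i+1}}$, is \emph{literally} the same expression, since both are built from the same edge labels on $p_i$ together with the identical ambient whiskers coming from $T$. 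Summing over $i$ therefore telescopes:
\[ \sum_{i=1}^{n} E_{P_i} \;=\; \Omega^{(m)}(p_0) - \Omega^{(m)}(p_n) \;=\; \Omega^{(m)}(dom(dom(C))) - \Omega^{(m)}(cod(dom(C))), \]
which is, by Definition \ref{conditions_and_obstructions}, precisely the $m^{th}$ order obstruction-type expression for the whiskered polygon whose single 2-cell has domain $dom(dom(C))$ and codomain $cod(dom(C))$.

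The main obstacle is not computational but bookkeeping: one must make sure that the replacement labelings supplied by WF1 can be chosen consistently along the intermediate paths $p_0,p_1,\ldots,p_n$, so that the cancellation in the telescoping sum is exact rather than merely up to the equivalences of WF1b. This is possible because the replacement label on an edge depends only on the edge labels of the path traversing it and the ambient data of $T$, so a single inductive choice, compatible with all the $W_{P_i}$ simultaneously, exists. Once that is established, the content of the proposition is a purely formal signed telescoping.
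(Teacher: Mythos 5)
Your proposal is correct and follows essentially the same route as the paper's (very terse) proof: the paper simply invokes the combinatorial structure of Power's pasting theorem and notes that ``sums cancel in pairs,'' which is exactly your sequential composition order $p_0,\ldots,p_n$ together with the telescoping of the $\Omega^{(m)}(p_i)$. Your version merely makes explicit the construction of the whiskered polygons, the inheritance of well-formedness, and the bookkeeping needed for the cancellation to be exact, all of which the paper leaves implicit.
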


\begin{proof}
Once the combinatorial structure of Power's proof of the the uniqueness of pasting compositions \cite{Power} is recalled, the result is immediate -- sums cancel in pairs leaving only the difference giving the desired $m^{th}$ order obstruction-type expression.
\end{proof}

Unfortunately, as it stands, the result is not immediately applicable.  Recall the cube encoding Gerstenhaber's proof.  Any face is part of a whiskered polygon (with one whisker having one edge, and the other having none).  The $m^{th}$ order obstruction is given by 

\[  \sum_{\stackrel{i,j  \in  \{0,\ldots, m-1\} }{ i + j =  m}} \mu^{(i)}(\mu^{(j)}(a,b), c) - 
 \sum_{\stackrel{i,j  \in  \{0,\ldots, m-1\} }{ i + j =  m}} \mu^{(i)}(a, \mu^{(j)}(b,c)) \]

The terms in its coboundary are {\em not} $m^{th}$ order obstruction-type expressions associated to the faces of their cubes with their whiskers.  The $m^{th}$ order obstruction type expressions are, instead, instances of $\mu^{(0)} = \mu$ with the obstruction and a variable as inputs, or instances of the obstruction with one of its variables replaced with $\mu^{(0)} = \mu$ applied to two variables.  What then is the relationship between actual  $m^{th}$ order obstructions and $m^{th}$ order obstruction-type expressions?

If one considers one of the whiskered polygons in the example, it is easy to see that the terms in the $m^{th}$ order obstruction-type expression which do not correspond to terms from the coboundary of the obstruction have a label of positive degree on the edge of the whisker.  If we fix the label on the whisker edge to be of degree $d > 0$, the terms with this label are then the condition that $\mu^{(m-d)}$ cobound the degree $m-d$ obstruction (or the cocycle condition if $m-d = 1$) with $\mu^{(d)}$ of two variables as argument (resp. used as input to an instance of $\mu^{(d)}$) when the non-trivial whisker is the domain (resp. codomain) whisker.

In this case, provided the system of parallels is describing the terms of an associative deformation, the $m^{th}$ order obstruction-type expression is thus equal to the corresponding term in the coboundary of the $m^{th}$ order obstruction, since its extra terms all vanish by the cocycle and cobounding conditions.

To imitate this in general, we need conditions depending only on the labels on the domain and codomain of the 2-cell in a whiskered polygon which ensures that the cocycle- and cobounding-type conditions in all lower degrees hold.

As in Definition \ref{conditions_and_obstructions} consider a whiskered polygon $W$ with 2-cell $P$ and let $f_1,\ldots, f_k$ (resp. $g_1,\ldots, g_\ell$) be the replacement labeling on the maximal path which traverses domain (resp. codomain) of $P$.  Suppose the first $s$ edges lie in the domain whisker.  In this case, for $i \leq s$, $f_i$ and $g_i$ are equivalents, and the orderings on the edge labels of the domain whisker induced by restricting the partial orderings on $\Sigma(f_k)$ and $\Sigma(g_\ell)$ coincide.  Let $(D, \leq)$ be the resulting partially ordered set of equivalence classes of wffs.

\begin{definition} \label{strong_vanishing}

Given a well-formed labeling of a whiskered polygon $W$ with 2-cell $P$, let $\Bbb O$ be the set of operations occuring in the replacement labelings of both paths, and $\Bbb O_n$ a family of parallels for $\Bbb O$.  Recall that ${\Bbb P}(W)$ is the theory of parallels for $W$ (and thus includes the elements of $\Bbb O_n$. Let $s$ and $(D, \leq)$ be as in the discussion above.  And suppose the last $t$ edges of each path lie in the codomain whisker.

The {\em $m^{th}$ order strong vanishing condition} associated to $W$ (for $m \geq 0$) is the condition that

\begin{eqnarray*}
\forall \Phi_1,\ldots \Phi_t  \sum_{\stackrel{i_{s+1},\ldots, i_{k-t}  \in  \{0,\ldots, m\} }{ i_{s+1} + \ldots + i_{k-t} =  m}}
\hat{ f}_{s+1}^{(i_{s+1})}, \hat{f}_{s+2}^{(i_{s+2})}, 
\ldots , \hat{f}_{k-t}^{(i_{k-t})},\Phi_1, \ldots \Phi_t  - & &\\
\sum_{\stackrel{ j_{s+1},\ldots, j_{\ell-t} \in \{0,\ldots, m\}}{ j_{s+1} + \ldots + j_{\ell-t}  =  m}}
 \hat{g}_{s+1}^{(j_{s+1})}, \hat{g}_{s+2}^{(j_{s+2})}, \ldots , \hat{g}_{\ell-t}^{(j_{\ell-t})},\Phi_1, \ldots \Phi_t & = & 0. 
\end{eqnarray*}

\noindent where the hatting of wffs indicates the result of the following process:  for each maximal element of $(D, \leq)$ select a variable of the same type which does not occur in among the variables used in the labeling; the hatted wff is the result of replacing all wfss in that equivalence class with the corresponding variable; and the universal quantification ranges over all parallels in ${\Bbb P}(W)$ to the labels on the corresponding edge of the codomain whisker.

\end{definition}

We then have

\begin{proposition} \label{strong_works}
For any well-formed labeling of a whiskered polygon $W$ with operations ${\Bbb O}$ and a family of parallels ${\Bbb O}_n$, the $1^{st}$ order strong vanishing condition implies the cocycle-type condition, the $p^{th}$ order strong vanishing condition implies the $p^{th}$ order cobounding-type condition, and, moreover, if the  $p^{th}$ order vanishing conditions hold for all $0 \leq p < m$, then the $m^{th}$ order coboundary-type expression is equal to 

\begin{eqnarray*}
 \sum_{\stackrel{i_{s+1},\ldots, i_{k-t}  \in  \{0,\ldots, m-1\} }{ i_{s+1} + \ldots + i_{k-t} =  m}}
f_1,\ldots ,f_s, f_{s+1}^{(i_{s+1})}, f_{s+2}^{(i_{s+2})}, \ldots , f_{k-t}^{(i_{k-t})}, f_{k-t+1}, \ldots, f_k  -  & &\\
\sum_{\stackrel{ j_{s+1},\ldots, j_{\ell-t} \in \{0,\ldots, m-1\}}{ j_{s+1} + \ldots + j_{\ell-t}  =  m}}
g_1,\ldots ,g_s, g_{s+1}^{(j_{s+1})}, g_{s+2}^{(j_{s+2})}, \ldots , g_{\ell-t}^{(j_{\ell-t})}, g_{\ell-t+1}, \ldots g_\ell . & &  
\end{eqnarray*}

\noindent where $s$ and $t$ are as in the previous definition and the meanings of the sequences of wffs are as in Definition \ref{conditions_and_obstructions}.
\end{proposition}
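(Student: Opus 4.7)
The plan is to prove all three claims by a single decomposition of the $m^{th}$-order expressions according to how the total degree $m$ is distributed across the three segments of the maximal path in $W$: the domain whisker (edges $1$ through $s$), the 2-cell boundary (edges $s+1$ through $k-t$), and the codomain whisker (edges $k-t+1$ through $k$). For a multi-index $(i_1,\ldots,i_k)$ with $\sum_j i_j = m$, write $I_D = i_1+\cdots+i_s$, $I_P = i_{s+1}+\cdots+i_{k-t}$ and $I_C = i_{k-t+1}+\cdots+i_k$. Because the two maximal paths share the same whisker edges, matched $f$- and $g$-terms carry identical values of $I_D$ and $I_C$, so the decomposition preserves the pairing between the $f$- and $g$-sums.

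For claim (3), I split the $m^{th}$-order expression into Case~1, in which $I_D+I_C>0$ (hence $I_P<m$), and Case~2, in which $I_D=I_C=0$ (hence $I_P=m$). In Case~1, for each fixed distribution of degrees on the whisker edges, the inner sum over the 2-cell multi-indices summing to $I_P$ is precisely an instance of the $I_P^{th}$-order strong vanishing expression: the codomain-whisker wffs with their assigned degrees supply specific parallels for the universally quantified $\Phi_1,\ldots,\Phi_t$, while the domain-whisker outputs play the role of the variable inputs to the hatted 2-cell labels. Under the hypothesis that $p^{th}$-order strong vanishing holds for every $p<m$, every Case~1 summand therefore vanishes. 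What remains is the Case~2 sum, which is exactly the 2-cell-restricted expression on the right-hand side of the proposition.

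Claims (1) and (2) then follow by specializing (3): the cocycle-type condition is the $1^{st}$-order cobounding-type condition, and more generally the $p^{th}$-order cobounding-type condition reduces by the same decomposition to its 2-cell-restricted part, which is precisely the non-universally-quantified version of the $p^{th}$-order strong vanishing condition (obtained by setting the $\Phi_i$ to the actual codomain-whisker parallels and the variable inputs to the actual domain-whisker outputs, which is legitimate because strong vanishing quantifies over \emph{arbitrary} parallels). The main obstacle is formal rather than substantive: one must carefully verify that the ``inner sum'' in Case~1 literally coincides with a specialization of the strong vanishing expression, which requires unpacking the WF1--WF3 label-replacement conventions to match nested wfss on the 2-cell with the hatted variables and the nested wfss on the codomain whisker with the universally quantified $\Phi_i$. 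Once this bookkeeping is laid out (amounting to a recursive application of Power's description of pasting compositions in~\cite{Power}), all three claims fall out of the single decomposition.
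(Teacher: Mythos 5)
Your proof is correct and takes essentially the same route as the paper's: partition the terms of each expression according to the degrees carried by the whisker edges, recognize each group with positive total whisker degree as an instantiation of a strong vanishing condition of the corresponding lower order (hence zero), and observe that the surviving group, with all whisker degrees zero, is exactly the 2-cell-restricted expression displayed in the proposition. The paper's own proof is a terser statement of this same decomposition (and shares the same implicit reliance on the lower-order conditions in the cobounding-type claim), so your more explicit bookkeeping with $I_D$, $I_P$, $I_C$ is a faithful elaboration rather than a different argument.
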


\begin{proof}
The cocycle-type condition simply an instantiation of the $1^{st}$ order strong vanishing condition, while the $p^{th}$ order cobounding-type condition is the sum of all instantiations of the $p^{th}$ order strong vanishing condition ranging over all choices of parallels for the labels of edges in the whiskers. 

For the last statement, notice that the terms in the $m^{th}$ order coboundary-type expression which are not represented in the expression of the proposition all have at least one label on an edge of one of the whiskers which is of positive degree.  These terms can be partitioned into subsets which according to the degrees of the labels in the whiskers.  For each choice of degrees for the labels in the whiskers, the terms are an instatiation of the strong cocycle-type condition, and thus add to zero.
\end{proof} 

Two sorts of whiskered polygons satisfying the strong vanishing conditions arise in practice:  ``non-trivial'' whiskered polygons in which the strong vanishing conditions follow from the well-formedness of the labeling and deformation theoretic cocycle and cobounding conditions satisfied by the parallels of the labels in the boundary of the 2-cell, and ``trivial'' whiskered polygons in which the sets of edge labels on the domain and codomain of the 2-cell differ only by changing the choice of totalization of the partial order on wfss of the final label in the path and substitution of equivalent wffs.

We formalize this in the following propositions:

\begin{proposition} \label{nontrivial}
Given a well-formed labeling of a whiskered polygon $W$ with 2-cell $P$, let $\Bbb O$ be the set of operations occuring in the replacement labelings of both paths, and $\Bbb O_n$ a family of parallels for $\Bbb O$.  Let $s$, $t$, $k$, $\ell$ and $(D, \leq)$ and the hatting of wffs be as Definition \ref{strong_vanishing}.

If $\hat{f}_{s+1}, \ldots,  \hat{f}_{k-t}$ and $\hat{g}_{s+1}, \ldots , \hat{g}_{\ell-t}$ are a well-formed labeling of the directed polygon $P$, then $W$ satisfies the $0^{th}$ order strong vanishing condition.  If, moreover, this labeling of $P$ and the system of parallels satisfy the cocycle-type condition (resp. the $m^{th}$ order cobounding-type condition) then the well-formed labeling of $W$ and system of parallels satisfies the $1^{st}$ order (resp. $m^{th}$ order strong vanishing condition).
\end{proposition}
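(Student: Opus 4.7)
The plan is to treat all three claims as instances of a single observation: each term appearing in the $m^{th}$ order strong vanishing condition for $W$ is obtained from a term in the corresponding condition for the unwhiskered polygon $P$ by \emph{prolonging} by the fixed sequence $\Phi_1,\ldots,\Phi_t$ of parallels to the codomain-whisker labels, and this prolongation commutes with the finite linear combinations appearing in the cocycle- and cobounding-type expressions.

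First I would dispose of the $0^{th}$ order case, where the only index choice permitted is $i_{s+1}=\cdots=i_{k-t}=0$ (and likewise on the $\hat{g}$ side), so the sum reduces on each side to a single term in which every operation appears at degree $0$, i.e., as the original operation itself. The $0^{th}$ order strong vanishing condition therefore asserts that for every choice of $\Phi_1,\ldots,\Phi_t$, the composites $\hat{f}_{s+1},\ldots,\hat{f}_{k-t},\Phi_1,\ldots,\Phi_t$ and $\hat{g}_{s+1},\ldots,\hat{g}_{\ell-t},\Phi_1,\ldots,\Phi_t$ evaluate to equal arrows under every instantiation. Since by hypothesis $\hat{f}_{s+1},\ldots,\hat{f}_{k-t}$ and $\hat{g}_{s+1},\ldots,\hat{g}_{\ell-t}$ form a well-formed labeling of $P$, axiom WF3 applied to $P$ (with trivial whiskers) forces $\hat{f}_{k-t}$ and $\hat{g}_{\ell-t}$ to be equivalents, and applying the same sequence of parallels $\Phi_i$ to equal outputs produces equal outputs, so the difference vanishes.

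For the $1^{st}$- and $m^{th}$-order statements, I would prolong the hypothesized $P$-level identity by $\Phi_1,\ldots,\Phi_t$. The cocycle-type (resp.\ $m^{th}$ order cobounding-type) condition for $P$ says that a specific finite sum of composite wffs vanishes identically as an element of the target hom-space; the sum appearing in the strong vanishing condition for $W$ is obtained term-by-term by substituting each such composite $U$ into the outer context $(U,\Phi_1,\ldots,\Phi_t)$. Because every operation labeling a codomain-whisker edge is a composition, functor action, or natural-transformation instantiation in the underlying $k$-linear pasting diagram, and the $k$-linearity of these operations in their arrow-valued arguments is inherited by the vector-space structure on parallels noted just after Definition \ref{parallels}, the substitution $U\mapsto(U,\Phi_1,\ldots,\Phi_t)$ is a $k$-linear map on the relevant hom-space. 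Applying this linear operator to the vanishing $P$-level expression yields precisely the expression required by the strong vanishing condition for $W$, which is therefore zero.

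The main obstacle, and the only point beyond routine bookkeeping, is the justification that prolongation by $\Phi_1,\ldots,\Phi_t$ distributes over the finite sums in the cocycle- and cobounding-type conditions, since an arbitrary set-theoretic parallel need not a priori act linearly on its argument. This reduces to an operation-by-operation check that each arrow-valued operation of ${\Bbb T}(D)$ that can appear on a whisker edge is $k$-linear in its arrow input, and that this linearity is inherited by the parallels appearing in ${\Bbb P}(W)$. With that step discharged, both assertions reduce immediately to the hypothesized identities on $P$.
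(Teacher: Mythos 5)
Your argument is the same one the paper gives, only spelled out: the $0^{th}$ order case is the observation that well-formedness of the labeling of $P$ forces the two terminal labels to be equivalents, so their difference vanishes and composing with any fixed $\Phi_1,\ldots,\Phi_t$ preserves that; the higher-order cases are obtained by prolonging the vanishing expression on $P$ along the codomain whisker. The paper's proof is essentially those two sentences and nothing more. What you add --- and it is the only point of substance in the whole proposition --- is the explicit recognition that the prolongation $U \mapsto (U,\Phi_1,\ldots,\Phi_t)$ must distribute over the finite sums appearing in the cocycle- and cobounding-type conditions; the paper passes over this in silence.

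Your discharge of that step, however, is not available exactly as you state it. Definition \ref{parallels} defines a parallel as an \emph{arbitrary} set function with the prescribed domain, codomain, source and target; no $k$-linearity is imposed, so linearity of the operations of ${\Bbb T}(D)$ is \emph{not} inherited by their parallels, and the universal quantifier in Definition \ref{strong_vanishing} ranges over all of them. For a non-additive $\Phi_1$ (a nonzero constant, say) the two sums in the $m^{th}$ order condition have different numbers of summands whenever $k \neq \ell$ and $m \geq 1$, and the difference need not vanish. So either one reads the quantifier as restricted to the parallels that actually label whiskers in a polygonizable deformation theory --- where, by P4, these are degree-$0$ operations of ${\Bbb T}(D)$ and deformation terms, all multilinear because the diagram is $k$-linear, and your argument then closes --- or an additivity hypothesis on the $\Phi_i$ must be added to the statement. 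Replace ``this linearity is inherited by the parallels appearing in ${\Bbb P}(W)$'' by the weaker, and true, claim that the whisker parallels occurring in the intended applications are multilinear, and your proof is complete and rather more candid than the paper's about where the content lies. The $0^{th}$ order case needs none of this, since each side is a single term, as you correctly note.
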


\begin{proof}
Notice first that the condition that the labeling of $P$ be well-formed means that the wff labeling the two edges incident with the 0-codomain of $P$ are equivalents, and thus their difference (as an arrow-valued operation) vanishes.  The well-formedness may thus be viewed as a zeroth order analogue of the cocycle-type and cobounding-type relations.

The proposition is immediate once it is observed that at each order the strong vanishing condition on $W$ is simply the result of applying the parallels to labels on the codomain whisker to the terms of the expression of that order which vanishes on $P$.
\end{proof}

In practice the labels on $P$ come from an equational condition (associativity, functoriality, or naturality in the present work) and the cocycle-type and cobounding-type conditions are actual cocycle and cobounding conditions derived from the requirement that deformations preserve the equational condition, while the whiskers arise in taking coboundaries of the corresponding obstruction.

\begin{proposition} \label{trivial}
Given a well-formed labeling of a whiskered polygon $W$ with 2-cell $P$, let $s$, $t$, $k$, $\ell$, and $(D, \leq)$ and the hatting of wffs be as Definition \ref{strong_vanishing}.

If $k = \ell$ and $\hat{f}_{s+1}, \ldots,  \hat{f}_{k-t}$ is obtained from $\hat{g}_{s+1}, \ldots , \hat{g}_{\ell-t}$ by changing the totalization of the partial ordering on the wfss of the label on the edge(s) incident with the 0-codomain of $W$ and replacing wffs with equivalents, then the well-formed label of $W$ satisfies the strong vanishing conditions of all orders for any system of parallels.
\end{proposition}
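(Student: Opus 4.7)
The plan is to prove the strong vanishing conditions of all orders by exhibiting a bijection between the terms of the two sums appearing in the condition, and showing that corresponding terms are literally equal as parallels in ${\Bbb P}(W)$. Because $k = \ell$ and the two hatted sequences $\hat{f}_{s+1},\ldots,\hat{f}_{k-t}$ and $\hat{g}_{s+1},\ldots,\hat{g}_{\ell-t}$ are obtained from one another by change of totalization and substitution of equivalents, each edge in either sequence corresponds via its ``last-applied operation'' to a unique element of the common poset $\Sigma(\hat{f}_{k-t})^\circ$ (identified with $\Sigma(\hat{g}_{\ell-t})^\circ$ modulo equivalence). This gives a canonical bijection $e_{s+i} \leftrightarrow e'_{s+\pi(i)}$ of the interior edges, where $\pi$ is the permutation induced by the change of totalization.

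Next I would transport a degree assignment $(i_{s+1},\ldots,i_{k-t})$ along this bijection to a degree assignment $(j_{s+1},\ldots,j_{\ell-t})$ with $j_{s+\pi(r)} = i_{s+r}$, and argue that the two resulting wffs in ${\Bbb P}(W)$ agree. The point is that replacing a chosen operation $\psi$ throughout a wff by its parallel $\psi^{(d)}$ is a substitution whose result depends only on which operation is substituted and which degree is chosen, not on the linear order in which independent substitutions are performed. Since equivalent wffs denote the same set functions and the family of parallels assigns the same sequence of parallel set functions to equivalent operations, substitution of equivalents does not affect the resulting term in ${\Bbb P}(W)$ either. The tail contributions $\Phi_1,\ldots,\Phi_t$ appended from the codomain whisker are identical on the two paths by construction of the whiskered polygon.

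Summing over all $(i_{s+1},\ldots,i_{k-t})$ with $i_{s+1}+\cdots+i_{k-t} = m$ on the left and over all $(j_{s+1},\ldots,j_{\ell-t})$ with $j_{s+1}+\cdots+j_{\ell-t} = m$ on the right, the bijection above matches these index sets, and corresponding summands are equal. The difference is therefore zero, which is exactly the $m^{\text{th}}$ order strong vanishing condition; and since $m$ and the choice of $\Phi_1,\ldots,\Phi_t$ were arbitrary, the strong vanishing conditions hold at every order for any system of parallels.

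The main obstacle is purely bookkeeping: making precise the claim that ``change of totalization plus substitution of equivalents'' induces a well-defined bijection on edges that respects the association of each edge to an equivalence class of wfss, and verifying that the parallel-substitution process depends only on the final hatted wff (together with the assignment of degrees to its non-minimal sub-wfss) rather than on the particular totalization chosen. Once this combinatorial fact is isolated, the rest of the argument is a term-by-term cancellation, and no deformation-theoretic input (cocycle or cobounding identities) is needed, which is precisely why these polygons are termed ``trivial''.
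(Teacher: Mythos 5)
Your proposal is correct and follows essentially the same route as the paper: the paper's (very terse) proof likewise pairs each term of one sum with the corresponding term of the other via the permutation relating $\hat{f}_{\sigma(i)}$ to $\hat{g}_i$, observes that for any choice of degrees and of parallels on the codomain whisker the two resulting expressions are equivalent so their difference vanishes, and sums over all degree assignments of total $p$ to obtain the $p^{\text{th}}$ order strong vanishing condition. Your write-up simply makes explicit the bookkeeping (the induced bijection on interior edges and the order-independence of the parallel substitutions) that the paper leaves implicit.
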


\begin{proof}
In this case for any choice of parallels labels on edges of the codomain whisker, and of degrees in a system of parallels for the corresponding $\hat{f}_{\sigma(i)}$ and $\hat{g}_i$, the resulting expressions for the two paths are equivalent and thus their difference is zero, and summing over all such choices of total order $p$ gives the $p^{th}$ order strong vanishing condition for $W$.
\end{proof}

We refer to whiskered polygons equipped with a labeling satifying the hypotheses of Proposition \ref{trivial} as trivial whiskered polygons.  

Finally, we need to describe in general the relationship between the cohomological description of infinitesimal deformations and the evident expression of the same data in terms of systems of parallels and the various expressions given by labeling of (whiskered) polygons and tiled spheres.

\begin{definition}
If $D$ is a $k$-linear pasting diagram, ${\Bbb T}(D)$ its theory, or an extension thereof with the same types, its deformation theory is {\em polygonizable} if the cochain group in which cocycles specify first order deformations admits a direct sum decomposition indexed a family of equational axioms of the theory each of which can be expressed as the vanishing of the difference of the values of the paths in a well-formed labeling of a directed polygon (we call such a polygon equipped with its well-formed labeling an {\em axiomatic polygon}), and which, moreover satisfy

\begin{itemize}
\item[P1] The vanishing of each direct summand of the cocycle is precisely the cocycle-type condition associated to the axiomatic polygon indexing the direct summand.
\item[P2] Each direct summand of the $m^{th}$ order obstruction is the obsturuction-type expression associated to the axiomatic polygon indexing the direct summand.
\item[P3] The cobounding condition for the extension of a deformation to the next degree is the direct sum of the cobounding-type conditions associated to the axiomatic polygons.
\item[P4] And the coboundary of an obstruction admits a direct sum decomposition in which each direct summand is a signed sum of expressions specified by labeling whiskered polygons with the polygon labeled by the obstruction, and the whiskers labeled by operations of ${\Bbb T}(D)$ (degree 0 labels in the system of parallels naming the deformation terms).
\end{itemize}

\end{definition}

We are now in a position to state a theorem which encapsulates Shrestha's polygonal method for our purposes:

\begin{theorem}
If ${\Bbb T}(D)$ is the theory of a pasting diagram, or an extension thereof, and admits a polygonizable deformation theory, then all obstructions are cocycles, provided for each direct summand of the coboundary of the obstruction, there exists a tiled sphere, and a well-formed labeling of the tiled sphere with the properties

\begin{itemize}
\item[S1] Each whiskered polygon naming a summand in the signed sum of P4 occurs exactly once in the tiled sphere -- either in the domain (resp. codomain) if its sign is positive (resp. negative), or with its domain and codomain swapped and in the codomain (resp. domain) if its sign is positive (resp. negative).
\item[S2] Every 2-cell which is not part of the whiskered polygons of S1 is trivial in the sense of
Proposition \ref{trivial}
\end{itemize}

\end{theorem}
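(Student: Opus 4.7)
The plan is to proceed by induction on the order $m$ of the obstruction, with inductive hypothesis that the ambient deformation has been extended through order $m-1$ — so the cocycle-type condition at degree $1$ and the cobounding-type conditions at orders $2, \ldots, m-1$ hold on every axiomatic polygon of ${\Bbb T}(D)$. This is the standard setting in which the obstruction $\omega^{(m)}$ is even defined. By P4, $\delta \omega^{(m)}$ decomposes into direct summands, each a signed sum of expressions indexed by whiskered polygons whose 2-cell is an axiomatic polygon and whose whiskers carry operations of ${\Bbb T}(D)$. Fix one such summand; by hypothesis of the theorem, a tiled sphere $T$ with 3-cell $C$ and well-formed labeling satisfying S1 and S2 is available, and it suffices to show that this summand is zero.

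First I would apply Proposition \ref{elementary} to $T$ at order $m$: since $dom(C)$ and $cod(C)$ share their $0$-domain and $0$-codomain, both sums of $m^{th}$ order obstruction-type expressions collapse to the same outer obstruction-type expression, so the signed sum over whiskered polygons in $dom(C)$ minus those in $cod(C)$ vanishes. Next, invoking S2 together with Proposition \ref{trivial}, every 2-cell of $T$ that is not singled out by S1 sits inside a trivial whiskered polygon, whose $m^{th}$ order obstruction-type expression is zero term-by-term (each degree assignment yields equivalent parallels along the two paths). The identity therefore reduces to a signed sum indexed only by the non-trivial whiskered polygons distinguished by S1.

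Now for each such non-trivial whiskered polygon $W$ with 2-cell the axiomatic polygon $P$, the inductive hypothesis supplies the cocycle-type and cobounding-type conditions on $P$ at all orders $p < m$, so Proposition \ref{nontrivial} gives the strong vanishing conditions on $W$ at those orders. Splitting the defining sum of the $m^{th}$ order obstruction-type expression for $W$ according to the degrees on whisker edges, and applying Proposition \ref{strong_works} (the strong-vanishing form) to every term with at least one positive whisker degree, the only surviving contribution is the whisker-degree-$0$ term. This term is exactly the obstruction-type expression on $P$ composed with the whisker operations at degree $0$, i.e., the expression identified by P4 as the contribution of $W$ to the chosen direct summand of $\delta \omega^{(m)}$, with sign determined by whether $W$ lies in $dom(C)$ or $cod(C)$ (corrected via S1's orientation-reversal convention when the P4-sign is negative). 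Combining the two reductions, the signed sum of P4-contributions equals zero, so the direct summand of $\delta \omega^{(m)}$ is zero; since the argument applies uniformly to every direct summand, $\delta \omega^{(m)} = 0$ and $\omega^{(m)}$ is a cocycle.

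The main obstacle I anticipate is the sign and orientation bookkeeping required by S1 — verifying that the convention of swapping $dom$ and $cod$ for negatively-signed P4-summands matches the sign pattern of the ``$dom(C)$ minus $cod(C)$'' difference that falls out of Proposition \ref{elementary}. A secondary difficulty is checking that the wff-level identification of each surviving obstruction-type expression with the correct term in the P4-decomposition is compatible with the hatting convention of Definition \ref{strong_vanishing} and the equivalences enforced by WF1--WF3; once those identifications are in place, the cancellation is purely combinatorial.
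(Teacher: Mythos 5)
Your proposal is correct and follows essentially the same route as the paper: the paper's proof is simply the one-line assertion that the result follows from Propositions \ref{elementary}, \ref{strong_works}, \ref{nontrivial} and \ref{trivial}, and your argument is precisely the intended expansion of how those four propositions combine (Proposition \ref{elementary} for the global cancellation over the tiled sphere, Proposition \ref{trivial} to discard the S2 cells, and Propositions \ref{nontrivial} and \ref{strong_works} to reduce each non-trivial whiskered polygon to its P4 contribution). The sign and hatting bookkeeping you flag is indeed the only content the paper leaves unexamined, and your treatment of it is consistent with the conventions of S1 and Definition \ref{strong_vanishing}.
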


\begin{proof}  This follows immediately from Propositions \ref{elementary}, \ref{strong_works}, \ref{nontrivial} and \ref{trivial}.
\end{proof}

Figures 1 through 6 then establish the following:

\begin{theorem} \label{singlecomposition}
If $D$ is a pasting diagram with a single instance of composing two natural transformations or of composing a natural transformation with a functor, then all obstructions to its deformation are cocycles.
\end{theorem}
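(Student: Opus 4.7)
The plan is to reduce Theorem \ref{singlecomposition} to the polygonal criterion just established. By Proposition \ref{uniontopushout}, the deformation complex of any pasting diagram decomposes as a pushout along sub-diagrams, so it suffices to treat three minimal cases in which exactly one composition occurs: a 2-composition $\sigma\tau$ of natural transformations, a 1-precomposition $\sigma_F$ of a natural transformation by a functor on the source side, and a 1-postcomposition $H(\sigma)$ on the target side. These are precisely the situations governed by the chain maps $\circ_2^\bullet$, $\circ_{1,l}^\bullet$ and $\circ_{1,r}^\bullet$ of Propositions \ref{chainmapfrom2comp}--\ref{chainmapfrom1postcomp}, whose codomains are the ``collapsed'' deformation complexes.

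For each of these three cases I would first read off the direct summands of the cochain group indexed by axiomatic polygons: for every category ${\cal C}$, the associativity polygon of $m_{\cal C}$; for every functor $F$, the functoriality polygon expressing $F(a)F(b) = F(ab)$; for every natural transformation, its naturality square; and, for the single composition cell, the axiomatic polygon encoding the equation defining the composite (e.g.\ $\sigma\tau$, $\sigma_F$, or $H(\sigma)$). By P4 the coboundary of each obstruction summand is a signed sum of whiskered polygons whose central 2-cell is one of these axiomatic polygons, with whiskers labeled by degree-zero operations of ${\Bbb T}(D)$. The encapsulating theorem then reduces everything to exhibiting, for each such summand, a tiled sphere whose labeling is well-formed, in which the distinguished whiskered polygon appears exactly once with the correct orientation, and in which every other 2-cell is either an axiomatic polygon (handled by Proposition \ref{nontrivial}) or a trivial whiskered polygon (handled by Proposition \ref{trivial}).

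The content of the proof is then entirely carried by Figures 1 through 6, which display the required tiled spheres: Figure 1 (reproduced in the text) for the functoriality axiom under simultaneous deformation of source and target categories, and Figures 2--6 for the associativity, functoriality, naturality, and composition-defining axioms as they interact with each of the three kinds of single composition. For each figure the verification is mechanical: one checks WF0--WF3 along every maximal directed path, and inspects that every face other than the distinguished one is either an axiomatic polygon already known to satisfy the appropriate cocycle- and cobounding-type conditions, or a trivial whiskered polygon whose two boundary paths differ only by re-totalizing the partial order on the wfss of their common final label and substituting equivalent wffs.

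The main obstacle, and the reason the figures are indispensable, is that the coboundaries of the new obstructions include terms involving the brace operations $(-)\{\sigma,\tau\}$ and $(-)\{\sigma\}$; compare the bottom rows of Propositions \ref{chainmapfrom2comp} and \ref{chainmapfrom1postcomp}, and the ``Gerstenhaber--Voronov-style'' remark in their proof sketches. These brace terms are not directly obstruction-type expressions of any axiomatic polygon; they arise only by equating two different associations of a nested composition. The combinatorial work concealed in Figures 2--6 is arranging these brace-carrying faces so that they close up in a tiled sphere alongside axiomatic polygons encoding the Gerstenhaber--Voronov relation, with all remaining faces trivial. Once this arrangement is in place, Propositions \ref{elementary}, \ref{strong_works}, \ref{nontrivial} and \ref{trivial} combine to yield the cocycle conclusion with no further computation beyond that already carried out in the cochain-map propositions of the previous section.
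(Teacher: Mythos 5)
Your proposal is correct and follows essentially the same route as the paper: the paper's entire proof of Theorem \ref{singlecomposition} consists of invoking the encapsulating theorem of the polygonal method and pointing to Figures 1 through 6 as the required well-formed tiled spheres, exactly as you describe. Your additional remarks on verifying WF0--WF3 and on the role of the brace terms and Gerstenhaber--Voronov relation accurately spell out the mechanical checks the paper leaves implicit in the figures.
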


In principal, it appears, one could apply the polygonal method to directly show that obstructions to the deformation of any given (arbitrarily complicated) pasting diagram are cocycles -- however a metatheorem to this effect has proved to be beyond the authors' capabilities.  Instead, we will approach the general problem indirectly by reducing the deformations of any pasting diagram to deformations of a related pasting diagram all of whose cells are one of a small finite set of forms, provided one can specify that certain cells are deformed trivially:  the composition-free pasting diagrams for which the result was established in \cite{Yetter}, those of \ref{singlecomposition} and a short list of diagrams in which specified cells are deformed trivially:  triangles and solid tetrahedra all of whose faces are identity natural transformations and are deformed trivially, and two diagrams derived from those of Examples \ref{equalspost1comp} and \ref{equalspre1comp} by replacing the (degenerate) square labeled by $\tau$ with a bigon whose edged are labeled by the composite functors and two triangles labeled with the identity arrows of the composite functors, which must be deformed trivially.

\begin{figure}
\begin{center}

\setlength{\unitlength}{4144sp}%
\begingroup\makeatletter\ifx\SetFigFontNFSS\undefined%
\gdef\SetFigFontNFSS#1#2#3#4#5{%
  \reset@font\fontsize{#1}{#2pt}%
  \fontfamily{#3}\fontseries{#4}\fontshape{#5}%
  \selectfont}%
\fi\endgroup%
\begin{picture}(6350,8297)(721,-8943)
\thinlines
{\color[rgb]{0,0,0}\put(916,-2926){\vector( 1,-2){810}}
}%
{\color[rgb]{0,0,0}\put(1741,-4486){\vector( 3,-1){1365}}
}%
{\color[rgb]{0,0,0}\put(3091,-4906){\vector( 1, 0){1785}}
}%
{\color[rgb]{0,0,0}\put(4936,-4891){\vector( 4, 1){1352}}
}%
{\color[rgb]{0,0,0}\put(6271,-4531){\vector( 1, 2){788}}
}%
{\color[rgb]{0,0,0}\put(916,-6886){\vector( 1, 2){810}}
}%
{\color[rgb]{0,0,0}\put(1741,-5326){\vector( 3, 1){1365}}
}%
{\color[rgb]{0,0,0}\put(3091,-4906){\vector( 1, 0){1785}}
}%
{\color[rgb]{0,0,0}\put(4936,-4921){\vector( 4,-1){1352}}
}%
{\color[rgb]{0,0,0}\put(6271,-5281){\vector( 1,-2){788}}
}%
{\color[rgb]{0,0,0}\put(916,-6916){\vector( 1,-2){810}}
}%
{\color[rgb]{0,0,0}\put(1741,-8476){\vector( 3,-1){1365}}
}%
{\color[rgb]{0,0,0}\put(3091,-8896){\vector( 1, 0){1785}}
}%
{\color[rgb]{0,0,0}\put(4936,-8881){\vector( 4, 1){1352}}
}%
{\color[rgb]{0,0,0}\put(6271,-8521){\vector( 1, 2){788}}
}%
{\color[rgb]{0,0,0}\put(916,-2986){\vector( 1, 2){810}}
}%
{\color[rgb]{0,0,0}\put(1741,-1426){\vector( 3, 1){1365}}
}%
{\color[rgb]{0,0,0}\put(3091,-1006){\vector( 1, 0){1785}}
}%
{\color[rgb]{0,0,0}\put(4936,-1021){\vector( 4,-1){1352}}
}%
{\color[rgb]{0,0,0}\put(6271,-1381){\vector( 1,-2){788}}
}%
{\color[rgb]{0,0,0}\put(1699,-5302){\vector( 1,-2){825}}
}%
{\color[rgb]{0,0,0}\put(1717,-8464){\vector( 1, 2){773}}
}%
{\color[rgb]{0,0,0}\put(5446,-6871){\vector( 1, 2){773}}
}%
{\color[rgb]{0,0,0}\put(5461,-6901){\vector( 1,-2){803}}
}%
{\color[rgb]{0,0,0}\put(2581,-6916){\vector( 1, 0){2775}}
}%
{\color[rgb]{0,0,0}\put(946,-2986){\vector( 1, 0){1620}}
}%
{\color[rgb]{0,0,0}\put(2551,-2986){\vector( 1, 1){795}}
}%
{\color[rgb]{0,0,0}\put(2573,-3023){\vector( 1,-1){795}}
}%
{\color[rgb]{0,0,0}\put(4906,-2281){\vector( 3,-1){2100}}
}%
{\color[rgb]{0,0,0}\put(4872,-3732){\vector( 3, 1){2100}}
}%
{\color[rgb]{0,0,0}\put(3301,-2236){\vector( 1, 0){1590}}
}%
{\color[rgb]{0,0,0}\put(3391,-3796){\vector( 1, 0){1455}}
}%
{\color[rgb]{0,0,0}\put(3076,-1036){\vector( 1,-4){293}}
}%
{\color[rgb]{0,0,0}\put(3046,-4876){\vector( 1, 4){259}}
}%
{\color[rgb]{0,0,0}\put(4861,-1036){\vector( 0,-1){1170}}
}%
{\color[rgb]{0,0,0}\put(4876,-4906){\vector( 0, 1){1065}}
}%
\put(871,-2011){\makebox(0,0)[lb]{\smash{{\SetFigFontNFSS{12}{14.4}{\rmdefault}{\mddefault}{\updefault}{\color[rgb]{0,0,0}$F(a)$}%
}}}}
\put(1921,-1021){\makebox(0,0)[lb]{\smash{{\SetFigFontNFSS{12}{14.4}{\rmdefault}{\mddefault}{\updefault}{\color[rgb]{0,0,0}$F(b)$}%
}}}}
\put(3961,-766){\makebox(0,0)[lb]{\smash{{\SetFigFontNFSS{12}{14.4}{\rmdefault}{\mddefault}{\updefault}{\color[rgb]{0,0,0}$\sigma$}%
}}}}
\put(5461,-886){\makebox(0,0)[lb]{\smash{{\SetFigFontNFSS{12}{14.4}{\rmdefault}{\mddefault}{\updefault}{\color[rgb]{0,0,0}$F(b)\sigma$}%
}}}}
\put(6856,-1891){\makebox(0,0)[lb]{\smash{{\SetFigFontNFSS{12}{14.4}{\rmdefault}{\mddefault}{\updefault}{\color[rgb]{0,0,0}$F(a)(F(b)\sigma)$}%
}}}}
\put(1681,-2806){\makebox(0,0)[lb]{\smash{{\SetFigFontNFSS{12}{14.4}{\rmdefault}{\mddefault}{\updefault}{\color[rgb]{0,0,0}$ab$}%
}}}}
\put(2356,-1651){\makebox(0,0)[lb]{\smash{{\SetFigFontNFSS{12}{14.4}{\rmdefault}{\mddefault}{\updefault}{\color[rgb]{0,0,0}$F(a)F(b)$}%
}}}}
\put(2461,-2521){\makebox(0,0)[lb]{\smash{{\SetFigFontNFSS{12}{14.4}{\rmdefault}{\mddefault}{\updefault}{\color[rgb]{0,0,0}$F(ab)$}%
}}}}
\put(3961,-2026){\makebox(0,0)[lb]{\smash{{\SetFigFontNFSS{12}{14.4}{\rmdefault}{\mddefault}{\updefault}{\color[rgb]{0,0,0}$\sigma$}%
}}}}
\put(4966,-1651){\makebox(0,0)[lb]{\smash{{\SetFigFontNFSS{12}{14.4}{\rmdefault}{\mddefault}{\updefault}{\color[rgb]{0,0,0}$F(a)F(b)$}%
}}}}
\put(5521,-2281){\makebox(0,0)[lb]{\smash{{\SetFigFontNFSS{12}{14.4}{\rmdefault}{\mddefault}{\updefault}{\color[rgb]{0,0,0}$F(ab)\sigma$}%
}}}}
\put(811,-3961){\makebox(0,0)[lb]{\smash{{\SetFigFontNFSS{12}{14.4}{\rmdefault}{\mddefault}{\updefault}{\color[rgb]{0,0,0}$G(b)$}%
}}}}
\put(2281,-3496){\makebox(0,0)[lb]{\smash{{\SetFigFontNFSS{12}{14.4}{\rmdefault}{\mddefault}{\updefault}{\color[rgb]{0,0,0}$G(ab)$}%
}}}}
\put(2056,-4501){\makebox(0,0)[lb]{\smash{{\SetFigFontNFSS{12}{14.4}{\rmdefault}{\mddefault}{\updefault}{\color[rgb]{0,0,0}$G(a)$}%
}}}}
\put(2296,-4171){\makebox(0,0)[lb]{\smash{{\SetFigFontNFSS{12}{14.4}{\rmdefault}{\mddefault}{\updefault}{\color[rgb]{0,0,0}$G(a)G(b)$}%
}}}}
\put(3916,-3616){\makebox(0,0)[lb]{\smash{{\SetFigFontNFSS{12}{14.4}{\rmdefault}{\mddefault}{\updefault}{\color[rgb]{0,0,0}$\sigma$}%
}}}}
\put(3751,-4741){\makebox(0,0)[lb]{\smash{{\SetFigFontNFSS{12}{14.4}{\rmdefault}{\mddefault}{\updefault}{\color[rgb]{0,0,0}$\sigma$}%
}}}}
\put(5176,-3346){\makebox(0,0)[lb]{\smash{{\SetFigFontNFSS{12}{14.4}{\rmdefault}{\mddefault}{\updefault}{\color[rgb]{0,0,0}$\sigma G(ab)$}%
}}}}
\put(3916,-6691){\makebox(0,0)[lb]{\smash{{\SetFigFontNFSS{12}{14.4}{\rmdefault}{\mddefault}{\updefault}{\color[rgb]{0,0,0}$\sigma$}%
}}}}
\put(3886,-8686){\makebox(0,0)[lb]{\smash{{\SetFigFontNFSS{12}{14.4}{\rmdefault}{\mddefault}{\updefault}{\color[rgb]{0,0,0}$\sigma$}%
}}}}
\put(4966,-4366){\makebox(0,0)[lb]{\smash{{\SetFigFontNFSS{12}{14.4}{\rmdefault}{\mddefault}{\updefault}{\color[rgb]{0,0,0}$G(ab)$}%
}}}}
\put(5521,-4531){\makebox(0,0)[lb]{\smash{{\SetFigFontNFSS{12}{14.4}{\rmdefault}{\mddefault}{\updefault}{\color[rgb]{0,0,0}$\sigma G(a)$}%
}}}}
\put(6766,-3886){\makebox(0,0)[lb]{\smash{{\SetFigFontNFSS{12}{14.4}{\rmdefault}{\mddefault}{\updefault}{\color[rgb]{0,0,0}$(\sigma G(a))G(b)$}%
}}}}
\put(766,-6031){\makebox(0,0)[lb]{\smash{{\SetFigFontNFSS{12}{14.4}{\rmdefault}{\mddefault}{\updefault}{\color[rgb]{0,0,0}$G(b)$}%
}}}}
\put(736,-7831){\makebox(0,0)[lb]{\smash{{\SetFigFontNFSS{12}{14.4}{\rmdefault}{\mddefault}{\updefault}{\color[rgb]{0,0,0}$F(a)$}%
}}}}
\put(6766,-8101){\makebox(0,0)[lb]{\smash{{\SetFigFontNFSS{12}{14.4}{\rmdefault}{\mddefault}{\updefault}{\color[rgb]{0,0,0}$F(a)(F(b)\sigma )$}%
}}}}
\put(6871,-5986){\makebox(0,0)[lb]{\smash{{\SetFigFontNFSS{12}{14.4}{\rmdefault}{\mddefault}{\updefault}{\color[rgb]{0,0,0}$(\sigma G(a))G(b)$}%
}}}}
\put(2191,-5446){\makebox(0,0)[lb]{\smash{{\SetFigFontNFSS{12}{14.4}{\rmdefault}{\mddefault}{\updefault}{\color[rgb]{0,0,0}$G(a)$}%
}}}}
\put(5236,-5371){\makebox(0,0)[lb]{\smash{{\SetFigFontNFSS{12}{14.4}{\rmdefault}{\mddefault}{\updefault}{\color[rgb]{0,0,0}$\sigma G(a)$}%
}}}}
\put(5206,-8626){\makebox(0,0)[lb]{\smash{{\SetFigFontNFSS{12}{14.4}{\rmdefault}{\mddefault}{\updefault}{\color[rgb]{0,0,0}$F(b)\sigma$}%
}}}}
\put(2266,-8551){\makebox(0,0)[lb]{\smash{{\SetFigFontNFSS{12}{14.4}{\rmdefault}{\mddefault}{\updefault}{\color[rgb]{0,0,0}$F(b)$}%
}}}}
\put(1651,-7516){\makebox(0,0)[lb]{\smash{{\SetFigFontNFSS{12}{14.4}{\rmdefault}{\mddefault}{\updefault}{\color[rgb]{0,0,0}$G(b)$}%
}}}}
\put(1696,-6361){\makebox(0,0)[lb]{\smash{{\SetFigFontNFSS{12}{14.4}{\rmdefault}{\mddefault}{\updefault}{\color[rgb]{0,0,0}$F(a)$}%
}}}}
\put(5191,-6091){\makebox(0,0)[lb]{\smash{{\SetFigFontNFSS{12}{14.4}{\rmdefault}{\mddefault}{\updefault}{\color[rgb]{0,0,0}$F(a)\sigma$}%
}}}}
\put(5191,-7756){\makebox(0,0)[lb]{\smash{{\SetFigFontNFSS{12}{14.4}{\rmdefault}{\mddefault}{\updefault}{\color[rgb]{0,0,0}$\sigma G(b)$}%
}}}}
\put(3991,-1456){\makebox(0,0)[lb]{\smash{{\SetFigFontNFSS{12}{14.4}{\rmdefault}{\mddefault}{\updefault}{\color[rgb]{0,0,0}$T$}%
}}}}
\put(4066,-4276){\makebox(0,0)[lb]{\smash{{\SetFigFontNFSS{12}{14.4}{\rmdefault}{\mddefault}{\updefault}{\color[rgb]{0,0,0}$T$}%
}}}}
\put(1441,-6856){\makebox(0,0)[lb]{\smash{{\SetFigFontNFSS{12}{14.4}{\rmdefault}{\mddefault}{\updefault}{\color[rgb]{0,0,0}$T$}%
}}}}
\end{picture}%

\end{center}
\caption{Obstructions to deforming naturality are cocycle \label{naturality_obstructions_are_cocycles}}
\end{figure}

\begin{figure}
\begin{center}

\setlength{\unitlength}{4144sp}%
\begingroup\makeatletter\ifx\SetFigFontNFSS\undefined%
\gdef\SetFigFontNFSS#1#2#3#4#5{%
  \reset@font\fontsize{#1}{#2pt}%
  \fontfamily{#3}\fontseries{#4}\fontshape{#5}%
  \selectfont}%
\fi\endgroup%
\begin{picture}(7742,9379)(331,-10274)
\thinlines
{\color[rgb]{0,0,0}\put(691,-7171){\vector( 1,-4){360}}
}%
{\color[rgb]{0,0,0}\put(1096,-8626){\vector( 1,-1){900}}
}%
{\color[rgb]{0,0,0}\put(2026,-9526){\vector( 4,-1){1364}}
}%
{\color[rgb]{0,0,0}\put(3481,-9871){\vector( 1, 0){2010}}
}%
{\color[rgb]{0,0,0}\put(5566,-9871){\vector( 3, 1){1245}}
}%
{\color[rgb]{0,0,0}\put(6841,-9376){\vector( 1, 1){735}}
}%
{\color[rgb]{0,0,0}\put(736,-7081){\vector( 1, 1){930}}
}%
{\color[rgb]{0,0,0}\put(1726,-6106){\vector( 3, 1){1230}}
}%
{\color[rgb]{0,0,0}\put(5450,-5673){\vector( 3,-1){1170}}
}%
{\color[rgb]{0,0,0}\put(6750,-6177){\vector( 4,-3){1260}}
}%
{\color[rgb]{0,0,0}\put(691,-4111){\vector( 1, 4){360}}
}%
{\color[rgb]{0,0,0}\put(1096,-2656){\vector( 1, 1){900}}
}%
{\color[rgb]{0,0,0}\put(2026,-1756){\vector( 4, 1){1364}}
}%
{\color[rgb]{0,0,0}\put(3481,-1411){\vector( 1, 0){2010}}
}%
{\color[rgb]{0,0,0}\put(5566,-1411){\vector( 3,-1){1245}}
}%
{\color[rgb]{0,0,0}\put(6841,-1906){\vector( 1,-1){735}}
}%
{\color[rgb]{0,0,0}\put(7606,-2701){\vector( 1,-3){455}}
}%
{\color[rgb]{0,0,0}\put(736,-4201){\vector( 1,-1){930}}
}%
{\color[rgb]{0,0,0}\put(1726,-5176){\vector( 3,-1){1230}}
}%
{\color[rgb]{0,0,0}\put(3016,-5611){\vector( 1, 0){2325}}
}%
{\color[rgb]{0,0,0}\put(5450,-5609){\vector( 3, 1){1170}}
}%
{\color[rgb]{0,0,0}\put(6750,-5105){\vector( 4, 3){1260}}
}%
{\color[rgb]{0,0,0}\put(766,-4171){\vector( 4, 1){1352}}
}%
{\color[rgb]{0,0,0}\put(2116,-3781){\vector( 1, 3){370}}
}%
{\color[rgb]{0,0,0}\put(1981,-1876){\vector( 2,-3){470}}
}%
{\color[rgb]{0,0,0}\put(2596,-2656){\vector( 1, 0){1950}}
}%
{\color[rgb]{0,0,0}\put(4597,-2668){\vector( 3,-1){1245}}
}%
{\color[rgb]{0,0,0}\put(5896,-3091){\vector( 2,-1){2056}}
}%
{\color[rgb]{0,0,0}\put(2116,-3886){\vector( 2,-1){1170}}
}%
{\color[rgb]{0,0,0}\put(3016,-5536){\vector( 1, 4){251}}
}%
{\color[rgb]{0,0,0}\put(6016,-4591){\vector( 4, 1){1800}}
}%
{\color[rgb]{0,0,0}\put(3316,-4531){\vector( 1, 0){2655}}
}%
{\color[rgb]{0,0,0}\put(5371,-5566){\vector( 2, 3){620}}
}%
{\color[rgb]{0,0,0}\put(1126,-8521){\vector( 3, 4){900}}
}%
{\color[rgb]{0,0,0}\put(1681,-6241){\vector( 1,-3){350}}
}%
{\color[rgb]{0,0,0}\put(1186,-8581){\vector( 1, 0){1290}}
}%
{\color[rgb]{0,0,0}\put(2506,-8611){\vector( 3,-4){888}}
}%
{\color[rgb]{0,0,0}\put(2056,-7321){\vector( 4,-1){1064}}
}%
{\color[rgb]{0,0,0}\put(2491,-8566){\vector( 2, 3){640}}
}%
{\color[rgb]{0,0,0}\put(3136,-7621){\vector( 1, 0){2655}}
}%
{\color[rgb]{0,0,0}\put(5881,-7591){\vector( 1, 2){728}}
}%
{\color[rgb]{0,0,0}\put(5836,-7681){\vector( 2,-1){1740}}
}%
{\color[rgb]{0,0,0}\put(7600,-8509){\vector( 1, 3){455}}
}%
{\color[rgb]{0,0,0}\put(5447,-1464){\vector(-3,-4){822}}
}%
{\color[rgb]{0,0,0}\put(6725,-1905){\vector(-3,-4){822}}
}%
\put(4051,-5401){\makebox(0,0)[lb]{\smash{{\SetFigFontNFSS{12}{14.4}{\rmdefault}{\mddefault}{\updefault}{\color[rgb]{0,0,0}$\sigma$}%
}}}}
\put(6181,-1411){\makebox(0,0)[lb]{\smash{{\SetFigFontNFSS{12}{14.4}{\rmdefault}{\mddefault}{\updefault}{\color[rgb]{0,0,0}$\sigma$}%
}}}}
\put(6196,-9901){\makebox(0,0)[lb]{\smash{{\SetFigFontNFSS{12}{14.4}{\rmdefault}{\mddefault}{\updefault}{\color[rgb]{0,0,0}$\sigma$}%
}}}}
\put(4531,-4471){\makebox(0,0)[lb]{\smash{{\SetFigFontNFSS{12}{14.4}{\rmdefault}{\mddefault}{\updefault}{\color[rgb]{0,0,0}$\sigma$}%
}}}}
\put(5281,-2791){\makebox(0,0)[lb]{\smash{{\SetFigFontNFSS{12}{14.4}{\rmdefault}{\mddefault}{\updefault}{\color[rgb]{0,0,0}$\sigma$}%
}}}}
\put(346,-3226){\makebox(0,0)[lb]{\smash{{\SetFigFontNFSS{12}{14.4}{\rmdefault}{\mddefault}{\updefault}{\color[rgb]{0,0,0}$F(a)$}%
}}}}
\put(961,-2146){\makebox(0,0)[lb]{\smash{{\SetFigFontNFSS{12}{14.4}{\rmdefault}{\mddefault}{\updefault}{\color[rgb]{0,0,0}$F(b)$}%
}}}}
\put(1246,-3826){\makebox(0,0)[lb]{\smash{{\SetFigFontNFSS{12}{14.4}{\rmdefault}{\mddefault}{\updefault}{\color[rgb]{0,0,0}$ab$}%
}}}}
\put(2011,-1336){\makebox(0,0)[lb]{\smash{{\SetFigFontNFSS{12}{14.4}{\rmdefault}{\mddefault}{\updefault}{\color[rgb]{0,0,0}$G(F(a))$}%
}}}}
\put(3901,-1066){\makebox(0,0)[lb]{\smash{{\SetFigFontNFSS{12}{14.4}{\rmdefault}{\mddefault}{\updefault}{\color[rgb]{0,0,0}$G(F(b))$}%
}}}}
\put(2371,-3346){\makebox(0,0)[lb]{\smash{{\SetFigFontNFSS{12}{14.4}{\rmdefault}{\mddefault}{\updefault}{\color[rgb]{0,0,0}$F(ab)$}%
}}}}
\put(2266,-2071){\makebox(0,0)[lb]{\smash{{\SetFigFontNFSS{12}{14.4}{\rmdefault}{\mddefault}{\updefault}{\color[rgb]{0,0,0}$F(a)F(b)$}%
}}}}
\put(3136,-2941){\makebox(0,0)[lb]{\smash{{\SetFigFontNFSS{12}{14.4}{\rmdefault}{\mddefault}{\updefault}{\color[rgb]{0,0,0}$G(F(ab))$}%
}}}}
\put(3646,-1981){\makebox(0,0)[lb]{\smash{{\SetFigFontNFSS{12}{14.4}{\rmdefault}{\mddefault}{\updefault}{\color[rgb]{0,0,0}$G(F(a))G(F(b))$}%
}}}}
\put(5086,-2251){\makebox(0,0)[lb]{\smash{{\SetFigFontNFSS{12}{14.4}{\rmdefault}{\mddefault}{\updefault}{\color[rgb]{0,0,0}$G(F(a))G(F(b))$}%
}}}}
\put(5581,-3616){\makebox(0,0)[lb]{\smash{{\SetFigFontNFSS{12}{14.4}{\rmdefault}{\mddefault}{\updefault}{\color[rgb]{0,0,0}$G(F(ab))\sigma $}%
}}}}
\put(7231,-1906){\makebox(0,0)[lb]{\smash{{\SetFigFontNFSS{12}{14.4}{\rmdefault}{\mddefault}{\updefault}{\color[rgb]{0,0,0}$G(F(b))\sigma $}%
}}}}
\put(6076,-3166){\makebox(0,0)[lb]{\smash{{\SetFigFontNFSS{12}{14.4}{\rmdefault}{\mddefault}{\updefault}{\color[rgb]{0,0,0}$G(F(a))[G(F(b))\sigma ]$}%
}}}}
\put(7291,-9271){\makebox(0,0)[lb]{\smash{{\SetFigFontNFSS{12}{14.4}{\rmdefault}{\mddefault}{\updefault}{\color[rgb]{0,0,0}$G(F(b))\sigma$}%
}}}}
\put(6211,-7621){\makebox(0,0)[lb]{\smash{{\SetFigFontNFSS{12}{14.4}{\rmdefault}{\mddefault}{\updefault}{\color[rgb]{0,0,0}$G(F(a))[G(F(b))\sigma]$}%
}}}}
\put(436,-8071){\makebox(0,0)[lb]{\smash{{\SetFigFontNFSS{12}{14.4}{\rmdefault}{\mddefault}{\updefault}{\color[rgb]{0,0,0}$F(a)$}%
}}}}
\put(1156,-9286){\makebox(0,0)[lb]{\smash{{\SetFigFontNFSS{12}{14.4}{\rmdefault}{\mddefault}{\updefault}{\color[rgb]{0,0,0}$F(b)$}%
}}}}
\put(2161,-10006){\makebox(0,0)[lb]{\smash{{\SetFigFontNFSS{12}{14.4}{\rmdefault}{\mddefault}{\updefault}{\color[rgb]{0,0,0}$G(F(a))$}%
}}}}
\put(4051,-10201){\makebox(0,0)[lb]{\smash{{\SetFigFontNFSS{12}{14.4}{\rmdefault}{\mddefault}{\updefault}{\color[rgb]{0,0,0}$G(F(b))$}%
}}}}
\put(736,-4831){\makebox(0,0)[lb]{\smash{{\SetFigFontNFSS{12}{14.4}{\rmdefault}{\mddefault}{\updefault}{\color[rgb]{0,0,0}$H(b)$}%
}}}}
\put(2056,-5086){\makebox(0,0)[lb]{\smash{{\SetFigFontNFSS{12}{14.4}{\rmdefault}{\mddefault}{\updefault}{\color[rgb]{0,0,0}$H(a)$}%
}}}}
\put(3286,-5041){\makebox(0,0)[lb]{\smash{{\SetFigFontNFSS{12}{14.4}{\rmdefault}{\mddefault}{\updefault}{\color[rgb]{0,0,0}$H(a)H(b)$}%
}}}}
\put(2686,-3976){\makebox(0,0)[lb]{\smash{{\SetFigFontNFSS{12}{14.4}{\rmdefault}{\mddefault}{\updefault}{\color[rgb]{0,0,0}$H(ab)$}%
}}}}
\put(736,-6466){\makebox(0,0)[lb]{\smash{{\SetFigFontNFSS{12}{14.4}{\rmdefault}{\mddefault}{\updefault}{\color[rgb]{0,0,0}$H(b)$}%
}}}}
\put(2176,-6286){\makebox(0,0)[lb]{\smash{{\SetFigFontNFSS{12}{14.4}{\rmdefault}{\mddefault}{\updefault}{\color[rgb]{0,0,0}$H(a)$}%
}}}}
\put(4846,-5086){\makebox(0,0)[lb]{\smash{{\SetFigFontNFSS{12}{14.4}{\rmdefault}{\mddefault}{\updefault}{\color[rgb]{0,0,0}$H(a)H(b)$}%
}}}}
\put(6256,-4201){\makebox(0,0)[lb]{\smash{{\SetFigFontNFSS{12}{14.4}{\rmdefault}{\mddefault}{\updefault}{\color[rgb]{0,0,0}$\sigma H(ab)$}%
}}}}
\put(5851,-5221){\makebox(0,0)[lb]{\smash{{\SetFigFontNFSS{12}{14.4}{\rmdefault}{\mddefault}{\updefault}{\color[rgb]{0,0,0}$\sigma H(a)$}%
}}}}
\put(7591,-4741){\makebox(0,0)[lb]{\smash{{\SetFigFontNFSS{12}{14.4}{\rmdefault}{\mddefault}{\updefault}{\color[rgb]{0,0,0}$[\sigma H(a)]H(b)$}%
}}}}
\put(6061,-5776){\makebox(0,0)[lb]{\smash{{\SetFigFontNFSS{12}{14.4}{\rmdefault}{\mddefault}{\updefault}{\color[rgb]{0,0,0}$\sigma H(a)$}%
}}}}
\put(7501,-6541){\makebox(0,0)[lb]{\smash{{\SetFigFontNFSS{12}{14.4}{\rmdefault}{\mddefault}{\updefault}{\color[rgb]{0,0,0}$[\sigma H(a)]H(b)$}%
}}}}
\put(4426,-7426){\makebox(0,0)[lb]{\smash{{\SetFigFontNFSS{12}{14.4}{\rmdefault}{\mddefault}{\updefault}{\color[rgb]{0,0,0}$\sigma$}%
}}}}
\put(1996,-6781){\makebox(0,0)[lb]{\smash{{\SetFigFontNFSS{12}{14.4}{\rmdefault}{\mddefault}{\updefault}{\color[rgb]{0,0,0}$F(a)$}%
}}}}
\put(1276,-7726){\makebox(0,0)[lb]{\smash{{\SetFigFontNFSS{12}{14.4}{\rmdefault}{\mddefault}{\updefault}{\color[rgb]{0,0,0}$H(b)$}%
}}}}
\put(2491,-7306){\makebox(0,0)[lb]{\smash{{\SetFigFontNFSS{12}{14.4}{\rmdefault}{\mddefault}{\updefault}{\color[rgb]{0,0,0}$G(F(a))$}%
}}}}
\put(1471,-8476){\makebox(0,0)[lb]{\smash{{\SetFigFontNFSS{12}{14.4}{\rmdefault}{\mddefault}{\updefault}{\color[rgb]{0,0,0}$G(F(a))$}%
}}}}
\put(2371,-8101){\makebox(0,0)[lb]{\smash{{\SetFigFontNFSS{12}{14.4}{\rmdefault}{\mddefault}{\updefault}{\color[rgb]{0,0,0}$H(b)$}%
}}}}
\put(3016,-9121){\makebox(0,0)[lb]{\smash{{\SetFigFontNFSS{12}{14.4}{\rmdefault}{\mddefault}{\updefault}{\color[rgb]{0,0,0}$F(b)$}%
}}}}
\put(5521,-6691){\makebox(0,0)[lb]{\smash{{\SetFigFontNFSS{12}{14.4}{\rmdefault}{\mddefault}{\updefault}{\color[rgb]{0,0,0}$G(F(a))\sigma$}%
}}}}
\put(6031,-8296){\makebox(0,0)[lb]{\smash{{\SetFigFontNFSS{12}{14.4}{\rmdefault}{\mddefault}{\updefault}{\color[rgb]{0,0,0}$\sigma H(b)$}%
}}}}
\put(1261,-6991){\makebox(0,0)[lb]{\smash{{\SetFigFontNFSS{12}{14.4}{\rmdefault}{\mddefault}{\updefault}{\color[rgb]{0,0,0}$T$}%
}}}}
\put(2056,-7891){\makebox(0,0)[lb]{\smash{{\SetFigFontNFSS{12}{14.4}{\rmdefault}{\mddefault}{\updefault}{\color[rgb]{0,0,0}$T$}%
}}}}
\put(2146,-9226){\makebox(0,0)[lb]{\smash{{\SetFigFontNFSS{12}{14.4}{\rmdefault}{\mddefault}{\updefault}{\color[rgb]{0,0,0}$T$}%
}}}}
\put(4351,-5086){\makebox(0,0)[lb]{\smash{{\SetFigFontNFSS{12}{14.4}{\rmdefault}{\mddefault}{\updefault}{\color[rgb]{0,0,0}$T$}%
}}}}
\put(5746,-1921){\makebox(0,0)[lb]{\smash{{\SetFigFontNFSS{12}{14.4}{\rmdefault}{\mddefault}{\updefault}{\color[rgb]{0,0,0}$T$}%
}}}}
\end{picture}%

\end{center}
\caption{Obstructions to deforming a natural transformation from a composition of functors to a functor are cocycle \label{nat_from_composition_obstructions_are_cocycles}}
\end{figure}

\begin{figure}
\begin{center}

\setlength{\unitlength}{4144sp}%
\begingroup\makeatletter\ifx\SetFigFontNFSS\undefined%
\gdef\SetFigFontNFSS#1#2#3#4#5{%
  \reset@font\fontsize{#1}{#2pt}%
  \fontfamily{#3}\fontseries{#4}\fontshape{#5}%
  \selectfont}%
\fi\endgroup%
\begin{picture}(7968,9132)(1309,-10003)
\thinlines
{\color[rgb]{0,0,0}\put(3286,-1216){\vector( 1,-1){1095}}
}%
{\color[rgb]{0,0,0}\put(5963,-2333){\vector( 1, 1){1095}}
}%
{\color[rgb]{0,0,0}\put(4366,-2341){\vector( 1, 0){1485}}
}%
{\color[rgb]{0,0,0}\put(7051,-4726){\vector( 4, 1){2116}}
}%
{\color[rgb]{0,0,0}\put(6151,-4171){\vector( 3,-2){840}}
}%
{\color[rgb]{0,0,0}\put(3496,-4981){\vector( 3, 1){1242.900}}
}%
{\color[rgb]{0,0,0}\put(4739,-4567){\vector( 3, 1){1322.100}}
}%
{\color[rgb]{0,0,0}\put(1430,-4293){\vector( 3,-1){2085}}
}%
{\color[rgb]{0,0,0}\put(7051,-3526){\vector( 4,-1){2116}}
}%
{\color[rgb]{0,0,0}\put(6151,-4081){\vector( 3, 2){840}}
}%
{\color[rgb]{0,0,0}\put(3496,-3271){\vector( 3,-1){1242.900}}
}%
{\color[rgb]{0,0,0}\put(4739,-3685){\vector( 3,-1){1322.100}}
}%
{\color[rgb]{0,0,0}\put(1430,-3959){\vector( 3, 1){2085}}
}%
{\color[rgb]{0,0,0}\put(3264,-7066){\vector( 1, 1){1095}}
}%
{\color[rgb]{0,0,0}\put(5941,-5949){\vector( 1,-1){1095}}
}%
{\color[rgb]{0,0,0}\put(4344,-5941){\vector( 1, 0){1485}}
}%
{\color[rgb]{0,0,0}\put(1321,-4111){\vector( 2, 3){1970}}
}%
{\color[rgb]{0,0,0}\put(3361,-1201){\vector( 1, 0){3750}}
}%
{\color[rgb]{0,0,0}\put(7126,-1216){\vector( 3,-4){2139}}
}%
{\color[rgb]{0,0,0}\put(1332,-4135){\vector( 2,-3){1970}}
}%
{\color[rgb]{0,0,0}\put(7100,-6996){\vector( 3, 4){2139}}
}%
{\color[rgb]{0,0,0}\put(3331,-7081){\vector( 1, 0){3750}}
}%
{\color[rgb]{0,0,0}\put(5911,-2386){\vector( 1,-1){1140}}
}%
{\color[rgb]{0,0,0}\put(3451,-3196){\vector( 1, 1){855}}
}%
{\color[rgb]{0,0,0}\put(3474,-5019){\vector( 1,-1){855}}
}%
{\color[rgb]{0,0,0}\put(5850,-5881){\vector( 1, 1){1140}}
}%
{\color[rgb]{0,0,0}\put(3286,-9961){\vector( 1, 0){3750}}
}%
{\color[rgb]{0,0,0}\put(2327,-8497){\vector( 2, 3){900}}
}%
{\color[rgb]{0,0,0}\put(2372,-8590){\vector( 2,-3){900}}
}%
{\color[rgb]{0,0,0}\put(7066,-7126){\vector( 2,-3){900}}
}%
{\color[rgb]{0,0,0}\put(7021,-9871){\vector( 2, 3){900}}
}%
\put(3046,-2776){\makebox(0,0)[lb]{\smash{{\SetFigFontNFSS{12}{14.4}{\rmdefault}{\mddefault}{\updefault}{\color[rgb]{0,0,0}$T$}%
}}}}
\put(3406,-4156){\makebox(0,0)[lb]{\smash{{\SetFigFontNFSS{12}{14.4}{\rmdefault}{\mddefault}{\updefault}{\color[rgb]{0,0,0}$T$}%
}}}}
\put(3031,-5716){\makebox(0,0)[lb]{\smash{{\SetFigFontNFSS{12}{14.4}{\rmdefault}{\mddefault}{\updefault}{\color[rgb]{0,0,0}$T$}%
}}}}
\put(5071,-1681){\makebox(0,0)[lb]{\smash{{\SetFigFontNFSS{12}{14.4}{\rmdefault}{\mddefault}{\updefault}{\color[rgb]{0,0,0}$*$}%
}}}}
\put(7261,-4096){\makebox(0,0)[lb]{\smash{{\SetFigFontNFSS{12}{14.4}{\rmdefault}{\mddefault}{\updefault}{\color[rgb]{0,0,0}$*$}%
}}}}
\put(1876,-5881){\makebox(0,0)[lb]{\smash{{\SetFigFontNFSS{12}{14.4}{\rmdefault}{\mddefault}{\updefault}{\color[rgb]{0,0,0}$K(a)$}%
}}}}
\put(1906,-2221){\makebox(0,0)[lb]{\smash{{\SetFigFontNFSS{12}{14.4}{\rmdefault}{\mddefault}{\updefault}{\color[rgb]{0,0,0}$F(a)$}%
}}}}
\put(3661,-3646){\makebox(0,0)[lb]{\smash{{\SetFigFontNFSS{12}{14.4}{\rmdefault}{\mddefault}{\updefault}{\color[rgb]{0,0,0}$G(a)$}%
}}}}
\put(3751,-4651){\makebox(0,0)[lb]{\smash{{\SetFigFontNFSS{12}{14.4}{\rmdefault}{\mddefault}{\updefault}{\color[rgb]{0,0,0}$G(a)$}%
}}}}
\put(2356,-4441){\makebox(0,0)[lb]{\smash{{\SetFigFontNFSS{12}{14.4}{\rmdefault}{\mddefault}{\updefault}{\color[rgb]{0,0,0}$\sigma$}%
}}}}
\put(3541,-1891){\makebox(0,0)[lb]{\smash{{\SetFigFontNFSS{12}{14.4}{\rmdefault}{\mddefault}{\updefault}{\color[rgb]{0,0,0}$\tau$}%
}}}}
\put(2356,-3436){\makebox(0,0)[lb]{\smash{{\SetFigFontNFSS{12}{14.4}{\rmdefault}{\mddefault}{\updefault}{\color[rgb]{0,0,0}$\tau$}%
}}}}
\put(5266,-4711){\makebox(0,0)[lb]{\smash{{\SetFigFontNFSS{12}{14.4}{\rmdefault}{\mddefault}{\updefault}{\color[rgb]{0,0,0}$\tau$}%
}}}}
\put(5131,-5716){\makebox(0,0)[lb]{\smash{{\SetFigFontNFSS{12}{14.4}{\rmdefault}{\mddefault}{\updefault}{\color[rgb]{0,0,0}$\tau$}%
}}}}
\put(4951,-2236){\makebox(0,0)[lb]{\smash{{\SetFigFontNFSS{12}{14.4}{\rmdefault}{\mddefault}{\updefault}{\color[rgb]{0,0,0}$\sigma$}%
}}}}
\put(4951,-991){\makebox(0,0)[lb]{\smash{{\SetFigFontNFSS{12}{14.4}{\rmdefault}{\mddefault}{\updefault}{\color[rgb]{0,0,0}$\upsilon$}%
}}}}
\put(5056,-6886){\makebox(0,0)[lb]{\smash{{\SetFigFontNFSS{12}{14.4}{\rmdefault}{\mddefault}{\updefault}{\color[rgb]{0,0,0}$\upsilon$}%
}}}}
\put(3916,-6586){\makebox(0,0)[lb]{\smash{{\SetFigFontNFSS{12}{14.4}{\rmdefault}{\mddefault}{\updefault}{\color[rgb]{0,0,0}$s$}%
}}}}
\put(6496,-6271){\makebox(0,0)[lb]{\smash{{\SetFigFontNFSS{12}{14.4}{\rmdefault}{\mddefault}{\updefault}{\color[rgb]{0,0,0}$\sigma \tau$}%
}}}}
\put(6541,-1951){\makebox(0,0)[lb]{\smash{{\SetFigFontNFSS{12}{14.4}{\rmdefault}{\mddefault}{\updefault}{\color[rgb]{0,0,0}$\sigma \tau$}%
}}}}
\put(8116,-2146){\makebox(0,0)[lb]{\smash{{\SetFigFontNFSS{12}{14.4}{\rmdefault}{\mddefault}{\updefault}{\color[rgb]{0,0,0}$F(a)\upsilon = F(a)\sigma \tau$}%
}}}}
\put(8326,-5881){\makebox(0,0)[lb]{\smash{{\SetFigFontNFSS{12}{14.4}{\rmdefault}{\mddefault}{\updefault}{\color[rgb]{0,0,0}$\upsilon K(a) = \sigma \tau K(a)$}%
}}}}
\put(6511,-5461){\makebox(0,0)[lb]{\smash{{\SetFigFontNFSS{12}{14.4}{\rmdefault}{\mddefault}{\updefault}{\color[rgb]{0,0,0}$\tau K(a)$}%
}}}}
\put(4006,-5356){\makebox(0,0)[lb]{\smash{{\SetFigFontNFSS{12}{14.4}{\rmdefault}{\mddefault}{\updefault}{\color[rgb]{0,0,0}$K(a)$}%
}}}}
\put(4006,-2881){\makebox(0,0)[lb]{\smash{{\SetFigFontNFSS{12}{14.4}{\rmdefault}{\mddefault}{\updefault}{\color[rgb]{0,0,0}$F(a)$}%
}}}}
\put(5206,-3661){\makebox(0,0)[lb]{\smash{{\SetFigFontNFSS{12}{14.4}{\rmdefault}{\mddefault}{\updefault}{\color[rgb]{0,0,0}$\sigma$}%
}}}}
\put(6106,-3676){\makebox(0,0)[lb]{\smash{{\SetFigFontNFSS{12}{14.4}{\rmdefault}{\mddefault}{\updefault}{\color[rgb]{0,0,0}$\sigma G(a)$}%
}}}}
\put(6001,-4621){\makebox(0,0)[lb]{\smash{{\SetFigFontNFSS{12}{14.4}{\rmdefault}{\mddefault}{\updefault}{\color[rgb]{0,0,0}$G(a)\tau$}%
}}}}
\put(7576,-3421){\makebox(0,0)[lb]{\smash{{\SetFigFontNFSS{12}{14.4}{\rmdefault}{\mddefault}{\updefault}{\color[rgb]{0,0,0}$(\sigma G(a))\tau$}%
}}}}
\put(7186,-4396){\makebox(0,0)[lb]{\smash{{\SetFigFontNFSS{12}{14.4}{\rmdefault}{\mddefault}{\updefault}{\color[rgb]{0,0,0}$\sigma (G(a)\tau )$}%
}}}}
\put(6436,-2656){\makebox(0,0)[lb]{\smash{{\SetFigFontNFSS{12}{14.4}{\rmdefault}{\mddefault}{\updefault}{\color[rgb]{0,0,0}$F(a)\sigma $}%
}}}}
\put(4951,-9781){\makebox(0,0)[lb]{\smash{{\SetFigFontNFSS{12}{14.4}{\rmdefault}{\mddefault}{\updefault}{\color[rgb]{0,0,0}$\upsilon$}%
}}}}
\put(2611,-7306){\makebox(0,0)[lb]{\smash{{\SetFigFontNFSS{12}{14.4}{\rmdefault}{\mddefault}{\updefault}{\color[rgb]{0,0,0}$K(a)$}%
}}}}
\put(2251,-9376){\makebox(0,0)[lb]{\smash{{\SetFigFontNFSS{12}{14.4}{\rmdefault}{\mddefault}{\updefault}{\color[rgb]{0,0,0}$F(a)$}%
}}}}
\put(7651,-7621){\makebox(0,0)[lb]{\smash{{\SetFigFontNFSS{12}{14.4}{\rmdefault}{\mddefault}{\updefault}{\color[rgb]{0,0,0}$\upsilon K(a) = \sigma \tau K(a)$}%
}}}}
\put(7606,-9421){\makebox(0,0)[lb]{\smash{{\SetFigFontNFSS{12}{14.4}{\rmdefault}{\mddefault}{\updefault}{\color[rgb]{0,0,0}$F(a)\upsilon = F(a)\sigma \tau$}%
}}}}
\end{picture}

\end{center}
\caption{Obstructions to deforming the equality between a 2-composition of natural transformations and a natural transformation are cocycles \label{2composition_obstructions_are_cocycles}}
\end{figure}

\begin{figure}
\begin{center}

\setlength{\unitlength}{4144sp}%
\begingroup\makeatletter\ifx\SetFigFontNFSS\undefined%
\gdef\SetFigFontNFSS#1#2#3#4#5{%
  \reset@font\fontsize{#1}{#2pt}%
  \fontfamily{#3}\fontseries{#4}\fontshape{#5}%
  \selectfont}%
\fi\endgroup%
\begin{picture}(6519,9027)(961,-9493)
\thinlines
{\color[rgb]{0,0,0}\put(1351,-7321){\vector( 1,-4){341}}
}%
{\color[rgb]{0,0,0}\put(1726,-8701){\vector( 2,-1){1516}}
}%
{\color[rgb]{0,0,0}\put(3316,-9451){\vector( 1, 0){2610}}
}%
{\color[rgb]{0,0,0}\put(5896,-9421){\vector( 3, 4){1542}}
}%
{\color[rgb]{0,0,0}\put(3256,-841){\vector( 3,-2){1335}}
}%
{\color[rgb]{0,0,0}\put(4636,-1756){\vector( 4, 3){1200}}
}%
{\color[rgb]{0,0,0}\put(3316,-5131){\vector( 3, 2){1335}}
}%
{\color[rgb]{0,0,0}\put(4696,-4216){\vector( 4,-3){1200}}
}%
{\color[rgb]{0,0,0}\put(1756,-1651){\vector( 4,-1){2144}}
}%
{\color[rgb]{0,0,0}\put(3901,-2221){\vector( 3, 2){645}}
}%
{\color[rgb]{0,0,0}\put(3976,-2296){\vector( 3,-2){1005}}
}%
{\color[rgb]{0,0,0}\put(1351,-2941){\vector( 1, 4){341}}
}%
{\color[rgb]{0,0,0}\put(1726,-1561){\vector( 2, 1){1516}}
}%
{\color[rgb]{0,0,0}\put(3316,-811){\vector( 1, 0){2610}}
}%
{\color[rgb]{0,0,0}\put(5896,-841){\vector( 3,-4){1542}}
}%
{\color[rgb]{0,0,0}\put(1756,-4281){\vector( 4, 1){2144}}
}%
{\color[rgb]{0,0,0}\put(3901,-3711){\vector( 3,-2){645}}
}%
{\color[rgb]{0,0,0}\put(3976,-3636){\vector( 3, 2){1005}}
}%
{\color[rgb]{0,0,0}\put(1366,-7321){\vector( 1, 4){341}}
}%
{\color[rgb]{0,0,0}\put(1741,-5941){\vector( 2, 1){1516}}
}%
{\color[rgb]{0,0,0}\put(5911,-5221){\vector( 3,-4){1542}}
}%
{\color[rgb]{0,0,0}\put(1381,-3031){\vector( 1,-4){341}}
}%
{\color[rgb]{0,0,0}\put(1756,-4411){\vector( 2,-1){1516}}
}%
{\color[rgb]{0,0,0}\put(3346,-5161){\vector( 1, 0){2610}}
}%
{\color[rgb]{0,0,0}\put(5926,-5131){\vector( 3, 4){1542}}
}%
{\color[rgb]{0,0,0}\put(4981,-3016){\vector( 1, 0){2400}}
}%
\put(1021,-3721){\makebox(0,0)[lb]{\smash{{\SetFigFontNFSS{12}{14.4}{\rmdefault}{\mddefault}{\updefault}{\color[rgb]{0,0,0}$G(a)$}%
}}}}
\put(2461,-3916){\makebox(0,0)[lb]{\smash{{\SetFigFontNFSS{12}{14.4}{\rmdefault}{\mddefault}{\updefault}{\color[rgb]{0,0,0}$\sigma$}%
}}}}
\put(3841,-3226){\makebox(0,0)[lb]{\smash{{\SetFigFontNFSS{12}{14.4}{\rmdefault}{\mddefault}{\updefault}{\color[rgb]{0,0,0}$\sigma G(a)$}%
}}}}
\put(1081,-2161){\makebox(0,0)[lb]{\smash{{\SetFigFontNFSS{12}{14.4}{\rmdefault}{\mddefault}{\updefault}{\color[rgb]{0,0,0}$F(a)$}%
}}}}
\put(2371,-2116){\makebox(0,0)[lb]{\smash{{\SetFigFontNFSS{12}{14.4}{\rmdefault}{\mddefault}{\updefault}{\color[rgb]{0,0,0}$\sigma$}%
}}}}
\put(3766,-2641){\makebox(0,0)[lb]{\smash{{\SetFigFontNFSS{12}{14.4}{\rmdefault}{\mddefault}{\updefault}{\color[rgb]{0,0,0}$F(a)\sigma$}%
}}}}
\put(1636,-1036){\makebox(0,0)[lb]{\smash{{\SetFigFontNFSS{12}{14.4}{\rmdefault}{\mddefault}{\updefault}{\color[rgb]{0,0,0}$H(F(a))$}%
}}}}
\put(4501,-601){\makebox(0,0)[lb]{\smash{{\SetFigFontNFSS{12}{14.4}{\rmdefault}{\mddefault}{\updefault}{\color[rgb]{0,0,0}$\tau$}%
}}}}
\put(3901,-1126){\makebox(0,0)[lb]{\smash{{\SetFigFontNFSS{12}{14.4}{\rmdefault}{\mddefault}{\updefault}{\color[rgb]{0,0,0}$\sigma$}%
}}}}
\put(4276,-2176){\makebox(0,0)[lb]{\smash{{\SetFigFontNFSS{12}{14.4}{\rmdefault}{\mddefault}{\updefault}{\color[rgb]{0,0,0}$H(F(a))$}%
}}}}
\put(3076,-1516){\makebox(0,0)[lb]{\smash{{\SetFigFontNFSS{12}{14.4}{\rmdefault}{\mddefault}{\updefault}{\color[rgb]{0,0,0}$T$}%
}}}}
\put(5131,-1651){\makebox(0,0)[lb]{\smash{{\SetFigFontNFSS{12}{14.4}{\rmdefault}{\mddefault}{\updefault}{\color[rgb]{0,0,0}$H(\sigma)$}%
}}}}
\put(6781,-1471){\makebox(0,0)[lb]{\smash{{\SetFigFontNFSS{12}{14.4}{\rmdefault}{\mddefault}{\updefault}{\color[rgb]{0,0,0}$H(F(a))\tau$}%
}}}}
\put(5491,-2806){\makebox(0,0)[lb]{\smash{{\SetFigFontNFSS{12}{14.4}{\rmdefault}{\mddefault}{\updefault}{\color[rgb]{0,0,0}$H(F(a)\sigma)$}%
}}}}
\put(4516,-5116){\makebox(0,0)[lb]{\smash{{\SetFigFontNFSS{12}{14.4}{\rmdefault}{\mddefault}{\updefault}{\color[rgb]{0,0,0}$\tau$}%
}}}}
\put(3631,-4726){\makebox(0,0)[lb]{\smash{{\SetFigFontNFSS{12}{14.4}{\rmdefault}{\mddefault}{\updefault}{\color[rgb]{0,0,0}$\sigma$}%
}}}}
\put(976,-6721){\makebox(0,0)[lb]{\smash{{\SetFigFontNFSS{12}{14.4}{\rmdefault}{\mddefault}{\updefault}{\color[rgb]{0,0,0}$G(a)$}%
}}}}
\put(1006,-8131){\makebox(0,0)[lb]{\smash{{\SetFigFontNFSS{12}{14.4}{\rmdefault}{\mddefault}{\updefault}{\color[rgb]{0,0,0}$F(a)$}%
}}}}
\put(1651,-9391){\makebox(0,0)[lb]{\smash{{\SetFigFontNFSS{12}{14.4}{\rmdefault}{\mddefault}{\updefault}{\color[rgb]{0,0,0}$H(F(a))$}%
}}}}
\put(4321,-9391){\makebox(0,0)[lb]{\smash{{\SetFigFontNFSS{12}{14.4}{\rmdefault}{\mddefault}{\updefault}{\color[rgb]{0,0,0}$\tau$}%
}}}}
\put(6661,-8896){\makebox(0,0)[lb]{\smash{{\SetFigFontNFSS{12}{14.4}{\rmdefault}{\mddefault}{\updefault}{\color[rgb]{0,0,0}$H(F(a))\tau$}%
}}}}
\put(1621,-4861){\makebox(0,0)[lb]{\smash{{\SetFigFontNFSS{12}{14.4}{\rmdefault}{\mddefault}{\updefault}{\color[rgb]{0,0,0}$H(G(a)$}%
}}}}
\put(1561,-5581){\makebox(0,0)[lb]{\smash{{\SetFigFontNFSS{12}{14.4}{\rmdefault}{\mddefault}{\updefault}{\color[rgb]{0,0,0}$H(G(a)$}%
}}}}
\put(4261,-3841){\makebox(0,0)[lb]{\smash{{\SetFigFontNFSS{12}{14.4}{\rmdefault}{\mddefault}{\updefault}{\color[rgb]{0,0,0}$H(G(a)$}%
}}}}
\put(5281,-4471){\makebox(0,0)[lb]{\smash{{\SetFigFontNFSS{12}{14.4}{\rmdefault}{\mddefault}{\updefault}{\color[rgb]{0,0,0}$H(\sigma)$}%
}}}}
\put(6781,-4291){\makebox(0,0)[lb]{\smash{{\SetFigFontNFSS{12}{14.4}{\rmdefault}{\mddefault}{\updefault}{\color[rgb]{0,0,0}$H(G(a))\tau$}%
}}}}
\put(6796,-6046){\makebox(0,0)[lb]{\smash{{\SetFigFontNFSS{12}{14.4}{\rmdefault}{\mddefault}{\updefault}{\color[rgb]{0,0,0}$H(G(a))\tau$}%
}}}}
\end{picture}%

\end{center}
\caption{Obstructions to deforming the equality between a postcompostion of a natural transformation by a functor and a natural transformation are cocycles \label{post1composition_obstructions_are_cocycles}}
\end{figure}

\begin{figure}
\begin{center}

\setlength{\unitlength}{4144sp}%
\begingroup\makeatletter\ifx\SetFigFontNFSS\undefined%
\gdef\SetFigFontNFSS#1#2#3#4#5{%
  \reset@font\fontsize{#1}{#2pt}%
  \fontfamily{#3}\fontseries{#4}\fontshape{#5}%
  \selectfont}%
\fi\endgroup%
\begin{picture}(4974,5145)(904,-5071)
\thinlines
{\color[rgb]{0,0,0}\put(2131,-4461){\vector( 3,-1){1005}}
}%
{\color[rgb]{0,0,0}\put(3226,-4761){\vector( 1, 0){1215}}
}%
{\color[rgb]{0,0,0}\put(4470,-4750){\vector( 4, 3){1396}}
}%
{\color[rgb]{0,0,0}\put(931,-3681){\vector( 3,-2){1185}}
}%
{\color[rgb]{0,0,0}\put(2131,-591){\vector( 3, 1){1005}}
}%
{\color[rgb]{0,0,0}\put(3226,-291){\vector( 1, 0){1215}}
}%
{\color[rgb]{0,0,0}\put(4470,-302){\vector( 4,-3){1396}}
}%
{\color[rgb]{0,0,0}\put(931,-1371){\vector( 3, 2){1185}}
}%
{\color[rgb]{0,0,0}\put(916,-3656){\vector( 1, 0){1335}}
}%
{\color[rgb]{0,0,0}\put(2326,-3641){\vector( 1, 0){1005}}
}%
{\color[rgb]{0,0,0}\put(3406,-3641){\vector( 1, 0){1080}}
}%
{\color[rgb]{0,0,0}\put(4516,-3641){\vector( 1, 0){1350}}
}%
{\color[rgb]{0,0,0}\put(916,-3641){\vector( 3, 2){1185}}
}%
{\color[rgb]{0,0,0}\put(916,-1396){\vector( 1, 0){1335}}
}%
{\color[rgb]{0,0,0}\put(2326,-1411){\vector( 1, 0){1005}}
}%
{\color[rgb]{0,0,0}\put(3406,-1411){\vector( 1, 0){1080}}
}%
{\color[rgb]{0,0,0}\put(4516,-1411){\vector( 1, 0){1350}}
}%
{\color[rgb]{0,0,0}\put(2116,-2861){\vector( 3, 1){1005}}
}%
{\color[rgb]{0,0,0}\put(4455,-2572){\vector( 4,-3){1396}}
}%
{\color[rgb]{0,0,0}\put(916,-1411){\vector( 3,-2){1185}}
}%
{\color[rgb]{0,0,0}\put(2116,-2191){\vector( 3,-1){1005}}
}%
{\color[rgb]{0,0,0}\put(3211,-2491){\vector( 1, 0){1215}}
}%
{\color[rgb]{0,0,0}\put(4455,-2480){\vector( 4, 3){1396}}
}%
\put(1036,-796){\makebox(0,0)[lb]{\smash{{\SetFigFontNFSS{12}{14.4}{\rmdefault}{\mddefault}{\updefault}{\color[rgb]{0,0,0}$F(a)$}%
}}}}
\put(2116,-226){\makebox(0,0)[lb]{\smash{{\SetFigFontNFSS{12}{14.4}{\rmdefault}{\mddefault}{\updefault}{\color[rgb]{0,0,0}$G(F(a))$}%
}}}}
\put(2266,-2041){\makebox(0,0)[lb]{\smash{{\SetFigFontNFSS{12}{14.4}{\rmdefault}{\mddefault}{\updefault}{\color[rgb]{0,0,0}$H(F(a)$}%
}}}}
\put(3706,-61){\makebox(0,0)[lb]{\smash{{\SetFigFontNFSS{12}{14.4}{\rmdefault}{\mddefault}{\updefault}{\color[rgb]{0,0,0}$\tau$}%
}}}}
\put(3736,-1201){\makebox(0,0)[lb]{\smash{{\SetFigFontNFSS{12}{14.4}{\rmdefault}{\mddefault}{\updefault}{\color[rgb]{0,0,0}$\sigma$}%
}}}}
\put(5266,-436){\makebox(0,0)[lb]{\smash{{\SetFigFontNFSS{12}{14.4}{\rmdefault}{\mddefault}{\updefault}{\color[rgb]{0,0,0}$G(F(a))\tau$}%
}}}}
\put(4591,-1201){\makebox(0,0)[lb]{\smash{{\SetFigFontNFSS{12}{14.4}{\rmdefault}{\mddefault}{\updefault}{\color[rgb]{0,0,0}$G(F(a))\sigma$}%
}}}}
\put(5281,-2056){\makebox(0,0)[lb]{\smash{{\SetFigFontNFSS{12}{14.4}{\rmdefault}{\mddefault}{\updefault}{\color[rgb]{0,0,0}$\sigma H(F(a))$}%
}}}}
\put(1546,-1261){\makebox(0,0)[lb]{\smash{{\SetFigFontNFSS{12}{14.4}{\rmdefault}{\mddefault}{\updefault}{\color[rgb]{0,0,0}$F(a)$}%
}}}}
\put(1066,-2011){\makebox(0,0)[lb]{\smash{{\SetFigFontNFSS{12}{14.4}{\rmdefault}{\mddefault}{\updefault}{\color[rgb]{0,0,0}$F(a)$}%
}}}}
\put(1111,-3091){\makebox(0,0)[lb]{\smash{{\SetFigFontNFSS{12}{14.4}{\rmdefault}{\mddefault}{\updefault}{\color[rgb]{0,0,0}$F(a)$}%
}}}}
\put(1561,-3526){\makebox(0,0)[lb]{\smash{{\SetFigFontNFSS{12}{14.4}{\rmdefault}{\mddefault}{\updefault}{\color[rgb]{0,0,0}$F(a)$}%
}}}}
\put(1171,-4321){\makebox(0,0)[lb]{\smash{{\SetFigFontNFSS{12}{14.4}{\rmdefault}{\mddefault}{\updefault}{\color[rgb]{0,0,0}$F(a)$}%
}}}}
\put(2356,-1231){\makebox(0,0)[lb]{\smash{{\SetFigFontNFSS{12}{14.4}{\rmdefault}{\mddefault}{\updefault}{\color[rgb]{0,0,0}$G(F(a))$}%
}}}}
\put(2311,-3001){\makebox(0,0)[lb]{\smash{{\SetFigFontNFSS{12}{14.4}{\rmdefault}{\mddefault}{\updefault}{\color[rgb]{0,0,0}$H(F(a)$}%
}}}}
\put(2461,-3511){\makebox(0,0)[lb]{\smash{{\SetFigFontNFSS{12}{14.4}{\rmdefault}{\mddefault}{\updefault}{\color[rgb]{0,0,0}$H(F(a)$}%
}}}}
\put(2071,-4921){\makebox(0,0)[lb]{\smash{{\SetFigFontNFSS{12}{14.4}{\rmdefault}{\mddefault}{\updefault}{\color[rgb]{0,0,0}$G(F(a))$}%
}}}}
\put(3796,-3541){\makebox(0,0)[lb]{\smash{{\SetFigFontNFSS{12}{14.4}{\rmdefault}{\mddefault}{\updefault}{\color[rgb]{0,0,0}$\tau$}%
}}}}
\put(3706,-5056){\makebox(0,0)[lb]{\smash{{\SetFigFontNFSS{12}{14.4}{\rmdefault}{\mddefault}{\updefault}{\color[rgb]{0,0,0}$\tau$}%
}}}}
\put(3676,-2386){\makebox(0,0)[lb]{\smash{{\SetFigFontNFSS{12}{14.4}{\rmdefault}{\mddefault}{\updefault}{\color[rgb]{0,0,0}$\sigma$}%
}}}}
\put(5251,-2971){\makebox(0,0)[lb]{\smash{{\SetFigFontNFSS{12}{14.4}{\rmdefault}{\mddefault}{\updefault}{\color[rgb]{0,0,0}$\sigma H(F(a))$}%
}}}}
\put(5056,-4636){\makebox(0,0)[lb]{\smash{{\SetFigFontNFSS{12}{14.4}{\rmdefault}{\mddefault}{\updefault}{\color[rgb]{0,0,0}$G(F(a))\tau$}%
}}}}
\put(4666,-3436){\makebox(0,0)[lb]{\smash{{\SetFigFontNFSS{12}{14.4}{\rmdefault}{\mddefault}{\updefault}{\color[rgb]{0,0,0}$\tau H(F(a))$}%
}}}}
\end{picture}%

\end{center}
\caption{Obstructions to deforming the equality between a precompostion of a natural transformation by a functor and a natural transformation are cocycles \label{pre1composition_obstructions_are_cocycles}}
\end{figure}

\newpage

\section{Partially Trivial Deformations}

In \cite{Yetter} deformations of a functor $F$ (resp. a natural transformation $\sigma:F\Rightarrow G$) in which the source and target are left undeformed (or to say the same thing differently deformed trivially in the strong sense) were shown to be governed by the Hochschild complex $C^\bullet(F)$ (resp. $C^\bullet(F,G)$).


For the desired reduction we will need to consider deformations of pasting diagrams in which a natural transformation (in particular the identity natural transformation) is deformed trivially in the strong sense while its domains and codomains are deformed, possibly nontrivially.

As a warm-up and for potential use in other applications, let us consider first the problem of deforming the (pasting) diagram $D$ consisting of two categories $\cal A$ and $\cal B$ and a functor $F:{\cal A}\rightarrow {\cal B}$, subject to the requirement that $F$ be deformed trivially.

Without the restriction that $F$ be deformed trivially, a(n $n^{th}$ order) deformation would be determined by a family of parallels for the set of operations $\{\mu, \nu, F\}$, the compositions in $\cal A$ and $\cal B$ and the arrow-part of $F$ satisfying the usual cocycle and cobounding conditions.  The restriction that $F$ be deformed trivially requires that all of the positive order parallels to $F$ be zero.

Cohomologically, the triples of degree $k$ parallels $(\mu^{(k)}, \nu^{(k)}, F^{(k)})$ (in the absence of the triviality requirement) lie in ${\frak C}^\bullet(D)$, the mapping cone on $-F_*(p_1) + F^*(p_2): C^\bullet({\cal A}) \oplus  C^\bullet({\cal B}) \rightarrow C^\bullet(F)$ and satisfy cocycle ($k = 1$) and cobounding ($k > 1$) conditions.  The requirement that $F^{(1)} = 0$ together with the coboundary condition thus implies that $(\mu^{(1)}, \nu^{(1)})$ lies in 
$ker(-F_*(p_1) + F^*(p_2))$.  

In fact we have

\begin{proposition}
First order deformations of the (pasting) diagram $D$ consisting of two categories $\cal A$ and $\cal B$ and a functor $F:{\cal A}\rightarrow {\cal B}$, subject to the requirement that $F$ be deformed trivially are classified by the second cohomology of $ker(-F_*(p_1) + F^*(p_2))$.  Moreover all obstructions to deforming $D$ with $F$ deformed trivially are cocycles in the
third cochain group of $ker(-F_*(p_1) + F^*(p_2))$ and an $n^{th}$ order deformation with $F$ deformed trivially can be extended to an $(n+1)^{st}$ order deformation with $F$ deformed trivially if and only if there is a degree 2 cochain in $ker(-F_*(p_1) + F^*(p_2))$ cobounding the degree $n$ obstruction.
\end{proposition}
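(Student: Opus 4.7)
My plan is to realize the restricted deformation problem as the ``kernel'' of the full one in the following precise sense.  Since $F_*$ and $F^*$ are chain maps, $K^\bullet := \ker(-F_*(p_1) + F^*(p_2))$ is a subcomplex of $C^\bullet({\cal A}) \oplus C^\bullet({\cal B})$, and the assignment $(\alpha,\beta) \mapsto (\alpha, \beta, 0)$ gives a chain map (up to the standard degree shift in the cone) from $K^\bullet$ into ${\frak C}^\bullet(D)$ whose image is exactly the subgroup of cochains of the full deformation complex with zero $C^\bullet(F)$-component.  Requiring the degree-$n$ parallel of $F$ to be identically zero forces the $n^{th}$ order deformation triple $(\mu^{(n)}, \nu^{(n)}, 0)$ to lie in this image, and a gauge transformation preserving trivial deformation of $F$ in the strong sense is required to have its $C^\bullet(F)$-entry zero as well, i.e.\ to come from $K^{n-1}$.

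For the first-order classification, I would unpack the cocycle condition for $(\mu^{(1)}, \nu^{(1)}, 0)$ in ${\frak C}^\bullet(D)$:  the ${\cal A}$- and ${\cal B}$-rows give $d_{\cal A}\mu^{(1)} = 0$ and $d_{\cal B}\nu^{(1)} = 0$, while the $F$-row collapses (since $d_F(0) = 0$) to $F_*(\mu^{(1)}) = F^*(\nu^{(1)})$.  This is precisely the statement that $(\mu^{(1)}, \nu^{(1)})$ is a 2-cocycle in $K^\bullet$.  Two such are gauge-equivalent while strictly preserving the trivial deformation of $F$ iff they differ by the coboundary of some $(\alpha, \beta, 0)$ with $(\alpha, \beta) \in K^1$, yielding $H^2(K^\bullet)$ as the classifying group.

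For the obstruction statement, I would invoke the result of \cite{Yetter} that for composition-free pasting diagrams such as $({\cal A}, {\cal B}, F)$, the obstruction $(\Omega_\mu, \Omega_\nu, \Omega_F)$ to extending an $n^{th}$ order deformation to $(n+1)^{st}$ order is a 3-cocycle in ${\frak C}^\bullet(D)$.  The crux is to verify that when $F^{(k)} = 0$ for $1 \leq k \leq n$, the $F$-component $\Omega_F$ vanishes identically:  every term contributing to $\Omega_F$ is a quadratic expression of the form $F^{(i)}(\mu^{(j)})$ with $i+j = n+1$, $1 \leq i, j \leq n$, or $\nu^{(i)}(F^{(j)}, F^{(k)})$ with $i+j+k = n+1$ and indices in $\{0, \ldots, n\}$ but not all in the ``leading'' unknown position, and each such term contains at least one factor $F^{(k)}$ with $1 \leq k \leq n$, which is zero by hypothesis.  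With $\Omega_F = 0$ in hand, the cocycle condition for $(\Omega_\mu, \Omega_\nu, 0)$ in ${\frak C}^\bullet(D)$ reduces to $d_{\cal A}\Omega_\mu = 0$, $d_{\cal B}\Omega_\nu = 0$, and $F_*\Omega_\mu = F^*\Omega_\nu$, exhibiting $(\Omega_\mu, \Omega_\nu)$ as a 3-cocycle of $K^\bullet$.

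The extension criterion is then immediate:  extending while preserving $F^{(n+1)} = 0$ requires $(\mu^{(n+1)}, \nu^{(n+1)})$ solving $d_{\cal A}\mu^{(n+1)} = \Omega_\mu$, $d_{\cal B}\nu^{(n+1)} = \Omega_\nu$, and (from the $F$-row with $F^{(n+1)} = 0$) $F_*\mu^{(n+1)} = F^*\nu^{(n+1)}$; this says exactly that $(\mu^{(n+1)}, \nu^{(n+1)}) \in K^2$ cobounds $(\Omega_\mu, \Omega_\nu)$ in $K^\bullet$.  The only nontrivial piece of the argument is the combinatorial verification that all residual $F$-component terms in the obstruction vanish under the triviality hypothesis on $F$; once this is in hand, everything else is a direct transfer of the cocycle and cobounding conditions across the inclusion of $K^\bullet$ into ${\frak C}^\bullet(D)$.
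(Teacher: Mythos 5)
Your proposal is correct and follows the same overall architecture as the paper's proof: reduce everything to showing that the obstruction, which \emph{a priori} lives in the mapping cone on $-F_*(p_1)+F^*(p_2)$, actually lies in the kernel subcomplex, and then import the cocycle and cobounding statements from the composition-free case of \cite{Yetter}. The one place where you genuinely diverge is in how the kernel condition $F_*\Omega_\mu = F^*\Omega_\nu$ is established. The paper proves it by a direct inductive computation: assuming $F(\mu^{(k)}(-,-)) = \nu^{(k)}(F(-),F(-))$ for all $k<n$, it pushes $F$ through the first coordinate of the obstruction term by term and matches the result against the second coordinate. You instead observe that once the $C^\bullet(F)$-coordinate of the obstruction is identically zero --- which both you and the paper obtain from $F^{(k)}=0$ for $k\ge 1$, together with bilinearity of the $\nu^{(i)}$ for the terms $\nu^{(i)}(F^{(j)},F^{(k)})$, a point the paper makes explicit and you should too --- the third row of the cocycle equation $d(\Omega_\mu,\Omega_\nu,0)=0$ in the ambient cone reads exactly $-F_*\Omega_\mu+F^*\Omega_\nu=0$. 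This is a legitimate and somewhat cleaner shortcut: it trades the paper's explicit induction for a second appeal to the already-established cocycle property of obstructions in the full complex. The paper's computation has the virtue of being self-contained and of exhibiting the identity that actually propagates through the obstruction formula; your version is shorter and makes transparent that kernel membership is forced by the cocycle condition rather than being a coincidence of the formulas. Both arguments are valid, and your treatment of the first-order classification and of the extension criterion matches the paper's.
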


\begin{proof}
The classification of first order deformations follows from the remarks above.  The rest of the proposition follows from the corresponding result in \cite{Yetter} without the triviality restriction once it is shown that all obstructions, which {\em a priori} lie in the cone on $-F_*(p_1) + F^*(p_2)$ lie in the kernel (considered as a subcomplex under the obvious inclusion).

As observed above, $(\mu^{(1)}, \nu^{(1)})$ lies in 
$ker(-F_*(p_1) + F^*(p_2))$.  So suppose as an induction hypothesis that for all $k < n$
$(\mu^{(k)}, \nu^{(k)})$ lies in 
$ker(-F_*(p_1) + F^*(p_2))$ (or equivalently and more usefully $F(\mu^{(k)}) = \nu^{(k)}(F,F)$).

The degree $n$ obstruction $\omega^{(n)}$ in the cone then has coordinates

\[
 \sum_{\stackrel{ k,\ell < n}{ k + \ell = n }}
[ \mu^{(k)}(\mu^{(\ell)}(-,-),-) - \mu^{(k)}(-,\mu^{(\ell)}(-,-)) ], \]

\[ \sum_{\stackrel{ k,\ell < n}{ k + \ell = n }}
[ \nu^{(k)}(\nu^{(\ell)}(-,-),-) - \nu^{(k)}(-,\nu^{(\ell)}(-,-)) ], \]

\noindent and
\[ \sum_{\stackrel{ k,\ell < n}{ k + \ell = n }} F^{(k)}(\mu^{(\ell)}) -
\sum_{\stackrel{ k,\ell,m < n}{ k + \ell + m = n }} \nu^{(k)}(F^{(\ell)},F^{(m)})  \]

The last vanishes since each term involves $F^{(k)} = 0$ for some $k > 0$ (and in the latter
sum all the $\nu^{(k)}$ are bilinear).  It thus remains only to show that $F_*$ of the first coordinate equals $F^*$ of the second.

Computing

\[ F( \sum_{\stackrel{ k,\ell < n}{ k + \ell = n }}
[ \mu^{(k)}(\mu^{(\ell)}(-,-),-) - \mu^{(k)}(-,\mu^{(\ell)}(-,-)) ]) \] 

\[ =  
\sum_{\stackrel{ k,\ell < n}{ k + \ell = n }} 
[ \nu^{(k}(F(\mu^{(\ell)}(-,-),F(-)) - \nu^{(k}(F(-),F(\mu^{(\ell)}(-,-))) ] \]

\[ =  \sum_{\stackrel{ k,\ell < n}{ k + \ell = n }} 
[ \nu^{(k)}(\nu^{(\ell)}(F(-),F(-)),F(-)) - \nu^{(k)}(F(-),\nu^{(\ell)}(F(-),F(-))) ] \]

In each case the equality holds by the induction hypothesis, and in the second case
the bilinearity $\nu^{(k)}$ for each $k$.  Thus the proposition holds.
\end{proof}

A similar result holds for deformations of a diagram consisting of two parallel functors $F,G:{\cal A}\rightarrow {\cal B}$ and a natural transformation $\sigma:F\Rightarrow G$ between them which is to be deformed trivially:  without the triviality restriction, the deformation complex is the cone on $\sigma \ddag$.  With the restriction, $ker(\sigma \ddag)$ becomes the deformation complex:

\begin{proposition}First order deformations of the pasting diagram $D$ consisting of two categories a pair of parallel functors $F,G:{\cal A}\rightarrow {\cal B}$, their sources and target and a natural transformation $\sigma:F\Rightarrow G$ subject to the requirement that $\sigma$ be deformed trivially are classified by the second cohomology of $ker(\sigma \ddag)$.  Moreover all obstructions to deforming $D$ with $\sigma$ deformed trivially are cocycles in the
third cochain group of $ker(\sigma \ddag)$ and an $n^{th}$ order deformation with $\sigma$ deformed trivially can be extended to an $(n+1)^{st}$ order deformation with $\sigma$ deformed trivially if and only if there is a degree 2 cochain in $ker(\sigma \ddag)$ cobounding the degree $n$ obstruction.
\end{proposition}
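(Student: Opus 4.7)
The plan is to follow the template of the preceding proposition, with $cone(\sigma\ddag)$ playing the role of $cone(-F_*(p_1)+F^*(p_2))$. The unrestricted deformation complex is $cone(\sigma\ddag)$, with underlying graded group
\[ C^{\bullet+2}({\cal A})\oplus C^{\bullet+2}({\cal B})\oplus C^{\bullet+1}(F)\oplus C^{\bullet+1}(G)\oplus C^{\bullet}(F,G) ,\]
and the strong-triviality requirement on $\sigma$ is exactly that $\sigma^{(k)}=0$ for every $k\geq 1$. Reading the bottom row of the differential matrix of Proposition~4.7 with $\sigma^{(1)}=0$, the degree-one cocycle condition becomes
\[ \nu^{(1)}\{\sigma\}+\sigma_*(F^{(1)})-\sigma^*(G^{(1)})=0 , \]
which is exactly $\sigma\ddag(\mu^{(1)},\nu^{(1)},F^{(1)},G^{(1)})=0$. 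Together with the unchanged cocycle conditions on the remaining rows, this identifies first-order strongly-trivial deformations, modulo equivalence, with $H^2(ker(\sigma\ddag))$, where $ker(\sigma\ddag)$ is embedded in $cone(\sigma\ddag)$ by $(\alpha,\beta,\phi,\gamma)\mapsto(\alpha,\beta,\phi,\gamma,0)$.

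Granting this, the classification and extension statements will follow from the unrestricted result of \cite{Yetter} once it is shown by induction on $n$ that, whenever $\sigma^{(k)}=0$ and the lower-order tuples $(\mu^{(k)},\nu^{(k)},F^{(k)},G^{(k)})$ all lie in $ker(\sigma\ddag)$ for $k<n$, the $C^\bullet(F,G)$-coordinate $\omega_\sigma^{(n)}$ of the $n^{th}$ order obstruction vanishes. Given that vanishing, the fact (from \cite{Yetter}) that $\omega^{(n)}$ is a cocycle in $cone(\sigma\ddag)$ forces its $C^\bullet({\cal A}\stackrel{F}{G}{\cal B})$-coordinates to satisfy $\sigma\ddag(\cdot)=0$, placing $\omega^{(n)}$ in the image of $ker(\sigma\ddag)\hookrightarrow cone(\sigma\ddag)$. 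For the inductive step, I would write $\omega_\sigma^{(n)}$ explicitly as the degree-$n$ failure of deformed naturality: any summand containing a factor $\sigma^{(j)}$ with $j\geq 1$ vanishes outright, and the surviving terms -- each bilinear in lower-order deformation data and containing a single instance of $\sigma^{(0)}=\sigma$ -- regroup by total degree into a sum of expressions of the form $\sigma\ddag$ applied to the lower-degree tuples, all of which vanish by the inductive hypothesis.

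The main obstacle is precisely this regrouping. It is the same cross-term bookkeeping that underlies the cochain-map property of $\sigma\ddag$ in the proof of Proposition~4.7 of \cite{Yetter}, only invoked order-by-order and modulo the lower-order kernel relations rather than on the nose. It uses the Gerstenhaber-Voronov brace identity together with the linearity of $\sigma_*$, $\sigma^*$ and $(-)\{\sigma\}$ in their variable arguments; no genuinely new ingredient is needed beyond those already present in the cited proof. Once the regrouping is in hand the remainder of the proposition follows from the unrestricted result of \cite{Yetter} applied to the kernel subcomplex exactly as in the preceding proposition.
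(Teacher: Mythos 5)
Your overall architecture matches the paper's: identify the restricted first-order deformations with cocycles in $ker(\sigma \ddag)$, and reduce everything else to showing that the degree-$n$ obstruction lands in the subcomplex $ker(\sigma \ddag)\hookrightarrow cone(\sigma\ddag)$. Your observation that, once the $C^\bullet(F,G)$-coordinate $\omega_\sigma^{(n)}$ is known to vanish, the cocycle property of $\omega^{(n)}$ in the cone automatically forces the remaining coordinates into $ker(\sigma\ddag)$, is correct and is a clean way to organize that reduction. The gap is in the one step that carries all the weight: the claim that the surviving terms of $\omega_\sigma^{(n)}$ ``regroup by total degree into a sum of expressions of the form $\sigma\ddag$ applied to the lower-degree tuples.'' After setting $\sigma^{(j)}=0$ for $j\geq 1$, what survives is
\[ \omega_\sigma^{(n)}(a)\;=\;\sum_{i+j=n,\ i,j\geq 1}\left[\nu^{(i)}\bigl(F^{(j)}(a),\sigma_{t(a)}\bigr)-\nu^{(i)}\bigl(\sigma_{s(a)},G^{(j)}(a)\bigr)\right], \]
where $\nu^{(i)}$ is the degree-$i$ parallel of composition in ${\cal B}$. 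Every term here has \emph{both} indices positive, whereas $\sigma\ddag(\mu^{(k)},\nu^{(k)},F^{(k)},G^{(k)})=\nu^{(k)}\{\sigma\}+\sigma_*(F^{(k)})-\sigma^*(G^{(k)})$ only ever produces terms in which one of the two indices is $0$ (degree-$k$ $\nu$ against degree-$0$ $F,G$, or degree-$0$ $\nu$ against degree-$k$ $F,G$). So no regrouping of $\omega_\sigma^{(n)}$ into homogeneous instances of $\sigma\ddag$ is available, and the inductive hypothesis that the lower-order tuples lie in $ker(\sigma\ddag)$ says nothing directly about these cross-terms. Already at $n=2$ the problem is visible: $\omega_\sigma^{(2)}(a)=\nu^{(1)}(F^{(1)}(a),\sigma)-\nu^{(1)}(\sigma,G^{(1)}(a))$, while the degree-$1$ kernel relation only controls $\nu^{(1)}(F(a),\sigma)-\nu^{(1)}(\sigma,G(a))$ in terms of $\nu^{(0)}(F^{(1)}(a),\sigma)-\nu^{(0)}(\sigma,G^{(1)}(a))$; it never evaluates $\nu^{(1)}$ on $F^{(1)}(a)$.

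This is precisely the point at which the paper abandons direct computation: its proof appeals to the tiled sphere of Figure \ref{naturality_obstructions_are_cocycles} and the general machinery of the polygonal method, in which the non-naturality faces (functoriality of $F$ and of $G$ whiskered by $\sigma$, and associativity of composition in ${\cal B}$ with $\sigma$ inserted) feed in the lower-order cocycle and cobounding conditions for $\nu$, $F$ and $G$ that are needed, over and above the kernel relations, to dispose of the cross-terms. If you want to keep a purely computational argument you must bring those relations in explicitly and exhibit the cancellation; the appeal to ``the same cross-term bookkeeping that underlies the cochain-map property of $\sigma\ddag$'' does not suffice, because the cochain-map identity is an identity involving the undeformed structure maps and never pairs two positive-degree parallels against each other.
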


\begin{proof}
The argument is essentially identical to that for the previous proposition, except for the calculation showing that obstructions, which {\em a priori} lie in the full deformation complex of $D$ lie in $ker(\sigma \ddag)$.

To see that all obstructions lie in $ker(\sigma \ddag)$, consider Figure \ref{naturality_obstructions_are_cocycles}.  The condition that degree $n$ obstruction lies in $ker(\sigma \ddag)$ is precisely the condition that the signed sum of the degree $n$ obstruction-type expressions associated to the non-trivial faces other than the three hexagons vanish, when all higher order parallels of $\sigma$ are instantiated as 0.  But observe that the other faces are all trivial or evaluate to 0,  in all degrees greater than 0, trivially under the hypothesis that all higher order parallels of $\sigma$ are 0, so the result follows from the general principles of the polygonal method.
\end{proof}

Applying the same argument {\em mutatis mutandis} to Figure \ref{nat_from_composition_obstructions_are_cocycles} shows

\begin{proposition}\label{trivialtriangle} 
First order deformations of the pasting diagram $D$ consisting of three categories  ${\cal A}, {\cal B}$ and ${\cal C}$ three functors $F:{\cal A}\rightarrow {\cal B}$, $G:{\cal B}\rightarrow {\cal C}$ and $H:{\cal A}\rightarrow {\cal C}$ and a natural transformation $\sigma:G(F)\Rightarrow H$ subject to the requirement that $\sigma$ be deformed trivially are classified by the second cohomology of $ker(\phi)$, where

\[ \phi = [0, 0, (-)\{\sigma\}, G^*(\sigma_*), F_*(\sigma_*), -\sigma^*]:{\frak C}^\bullet(\partial D) \rightarrow C^\bullet(G(F),H) .\]  

\noindent where $\partial D$ is the diagram obtained by omitting $\sigma$.

Moreover all obstructions to deforming $D$ with $\sigma$ deformed trivially are cocycles in the
third cochain group of $ker(\phi)$ and an $n^{th}$ order deformation with $\sigma$ deformed trivially can be extended to an $(n+1)^{st}$ order deformation with $\sigma$ deformed trivially if and only if there is a degree 2 cochain in $ker(\phi)$ cobounding the degree $n$ obstruction.
\end{proposition}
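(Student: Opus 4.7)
The plan is to mirror the structure of the two preceding propositions about partially trivial deformations, so I first record what changes. Without the triviality restriction on $\sigma$, the deformation complex of $D$ is the mapping cone on $\phi:\mathfrak{C}^\bullet(\partial D)\rightarrow C^\bullet(G(F),H)$ obtained by extending $\partial D$ by a single natural transformation in the manner of Proposition \ref{chainmapfrom1postcomp}. So first order deformations without restriction are classified by the second cohomology of $\mathrm{cone}(\phi)$, and obstructions are cocycles there. Imposing the strong-trivial condition $\sigma^{(k)}=0$ for all $k\geq 1$ and reading off the $\sigma$-coordinate of the cone's coboundary condition on a first order cochain shows immediately that the underlying cochain $(\mu^{(1)},\nu^{(1)},\lambda^{(1)},F^{(1)},G^{(1)},H^{(1)})$ on $\partial D$ must lie in $\ker(\phi)$; this yields the claimed classification as $H^2(\ker(\phi))$.

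The substantive step is to show, by induction on $n$, that every degree $n$ obstruction $\omega^{(n)}$ lies in the subcomplex $\ker(\phi)\hookrightarrow \mathrm{cone}(\phi)$, not merely in the cone. Concretely, one needs to show that the $C^\bullet(G(F),H)$ coordinate of $\omega^{(n)}$, which a priori is an independent term produced by deforming the naturality equation $G(F(a))\sigma = \sigma H(a)$, coincides with $\phi$ applied to the $\partial D$-coordinates, i.e.\ decomposes as $(-)\{\sigma\}$ applied to the composition-obstructions in $\mathcal{A},\mathcal{B},\mathcal{C}$, $G^*(\sigma_*)$ applied to the functoriality obstruction for $F$, $F_*(\sigma_*)$ applied to that for $G$, and $-\sigma^*$ applied to that for $H$. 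The inductive hypothesis is exactly the previous-degree analogue of this identity.

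To execute this in a uniform way I would invoke the polygonal method on the tiled sphere of Figure \ref{nat_from_composition_obstructions_are_cocycles}, which already encodes the derivation that naturality-for-$\sigma$ obstructions are cocycles. The condition that $\omega^{(n)}\in\ker(\phi)$ is, after reindexing, the vanishing of the signed sum of degree $n$ obstruction-type expressions on all faces of that sphere other than the three hexagonal faces encoding the coboundary of the $\partial D$ obstructions. Under the hypothesis $\sigma^{(k)}=0$ for $k\geq 1$, every nontrivial whiskered polygon in Figure \ref{nat_from_composition_obstructions_are_cocycles} whose domain or codomain contains a positive-degree $\sigma$-label contributes zero, while the trivial whiskered polygons contribute zero by Proposition \ref{trivial}, and the remaining nontrivial faces were already verified to satisfy strong vanishing in the proof of Theorem \ref{singlecomposition}. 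The surviving identity is precisely $\phi(\omega^{(n)}|_{\partial D})=\omega^{(n)}|_{C^\bullet(G(F),H)}$.

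The main obstacle, and what I would grind out most carefully, is the bookkeeping in the previous paragraph: matching each face of Figure \ref{nat_from_composition_obstructions_are_cocycles} that survives the $\sigma^{(k)}=0$ reduction against one of the four summands of $\phi$ on the nose, so that the polygonal identity becomes literally the equation $\phi(\omega^{(n)}|_{\partial D})=\omega^{(n)}|_{C^\bullet(G(F),H)}$ rather than merely a relation up to reindexing. Once this matching is made, the classification of first order deformations, the characterization of obstructions as cocycles in $\ker(\phi)$, and the cobounding criterion for extending an $n^{th}$ order deformation to an $(n+1)^{st}$ all follow by restricting the corresponding statements for $\mathrm{cone}(\phi)$ in \cite{Yetter} to the subcomplex $\ker(\phi)$, exactly as in the two preceding propositions.
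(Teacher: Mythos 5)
Your proposal is correct and follows essentially the same route as the paper, which disposes of this proposition by applying the argument of the preceding proposition \emph{mutatis mutandis} to Figure \ref{nat_from_composition_obstructions_are_cocycles}: classify first order deformations by reading the triviality condition off the cone differential, then use the polygonal method on that tiled sphere (trivial faces and faces carrying positive-degree parallels of $\sigma$ vanish) to show the obstructions land in $\ker(\phi)$. In fact you spell out the bookkeeping in more detail than the paper does; the only quibble is that the cone on $\phi$ arises from the iterated cone construction of \cite{Yetter} rather than from Proposition \ref{chainmapfrom1postcomp}.
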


In the special case where $H = G(F)$ and $\sigma = Id_{G(F)}$, it is intuitively clear that the deformations of $D$ in which $\sigma$ is deformed trivially are completely determined by
the deformation complex of the simpler diagram 
${\cal A} \stackrel{F}{\rightarrow} {\cal B} \stackrel{G}{\rightarrow} {\cal C}$.  In fact we have

\begin{proposition} \label{reducestotwoedges}
In the deformation complex of the pasting diagram $D$ consisting of three categories  ${\cal A}, {\cal B}$ and ${\cal C}$ three functors $F:{\cal A}\rightarrow {\cal B}$, $G:{\cal B}\rightarrow {\cal C}$ and $H=G(F):{\cal A}\rightarrow {\cal C}$ and the natural transformation $\sigma = Id_{G(F)}:G(F)\Rightarrow H$ the complex $ker(\phi)$ where $\phi$ is as in Proposition \ref{trivialtriangle} is isomorphic to the deformation complex of the subdiagram
${\cal A} \stackrel{F}{\rightarrow} {\cal B} \stackrel{G}{\rightarrow} {\cal C}$.
\end{proposition}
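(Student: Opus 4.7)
The plan is to realize the claimed isomorphism as the projection $\pi \colon \ker(\phi) \to \frak{C}^\bullet(D')$ that forgets the $C^\bullet(H)$-coordinate, where $D' = {\cal A} \stackrel{F}{\rightarrow} {\cal B} \stackrel{G}{\rightarrow} {\cal C}$. First I would exploit the special form of $\sigma$. Since $\sigma = Id_{G(F)}$ and $H = G(F)$, pre-composition with $\sigma$ is nothing but the canonical identification $\sigma^* : C^\bullet(H) \stackrel{\sim}{\to} C^\bullet(G(F), H)$; in particular $-\sigma^*$ is invertible. The kernel equation $\phi(\alpha,\beta,\gamma,\mu_F,\mu_G,\mu_H)=0$ therefore solves uniquely for $\mu_H$ in terms of the other coordinates, yielding a canonical expression $\mu_H = \xi(\gamma,\mu_F,\mu_G)$. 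Moreover, once $\sigma = Id$ is substituted in, the constituent operations $(-)\{\sigma\}$, $G^*(\sigma_*)$, and $F_*(\sigma_*)$ collapse to the standard combinations of brace and pullback/pushforward maps that represent the deformation of the composite $G \circ F$, so $\xi$ is precisely the rule that expresses the deformation of a composite functor in terms of the deformations of its factors. This produces a bijection between underlying graded groups, with inverse $\iota(\alpha,\beta,\gamma,\mu_F,\mu_G) = (\alpha,\beta,\gamma,\mu_F,\mu_G,\xi(\gamma,\mu_F,\mu_G))$.

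Next I would verify that $\pi$ is a chain map. The differential of $\frak{C}^\bullet(\partial D)$ is a $6\times 6$ matrix in the style of Proposition 4.7 of \cite{Yetter}, and the first five coordinates do not involve $\mu_H$; these therefore match exactly the corresponding entries of $d_{D'}$. The only substantive check is that the sixth coordinate, $-H_*\alpha + H^*\gamma + d_H\mu_H$ evaluated at $\iota(\alpha,\beta,\gamma,\mu_F,\mu_G)$, is forced by the other five. Using $H = G(F)$ to write $H_* = G_*F_*$ and $H^* = F^*G^*$, and expanding $\xi$, this identity is precisely the statement that $\phi$ annihilates $d_{\partial D}$ of the element, which holds because $\phi$ is a chain map (Proposition~\ref{trivialtriangle}) after applying $(\sigma^*)^{-1}$ to both sides.

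The main obstacle is this last verification: reconciling $d_H$ with $d_G, d_F$ on the composite via the pullback/pushforward compatibilities, and matching the brace identities that appear when $Id$ is plugged into $(-)\{\sigma\}$. Because $\sigma^*$ is invertible here, though, the chain map identity for $\phi$ already furnishes the required compatibility, so no fundamentally new computation is needed beyond a careful unpacking of formulas.
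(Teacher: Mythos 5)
Your proposal is correct and is essentially the paper's own argument made explicit: the paper splits ${\frak C}^\bullet(\partial D)$ as $\ker(\phi)\oplus C^\bullet(H)$ using that $-\sigma^*$ is the (negated) identity when $\sigma=Id_{G(F)}$, and identifies $\ker(\phi)$ with the quotient by the last summand, which is exactly your projection $\pi$ with inverse $\iota$. The only difference is presentational — you solve explicitly for $\mu_H$ where the paper invokes the splitting of a short exact sequence — so no further comment is needed.
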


\begin{proof}
The obvious quotient map from the deformation complex of $\partial D$ to 

\[ C^\bullet(G(F),G(F)) = C^{\bullet}(G(F)) \]

\noindent is split by the obvious inclusion.  The kernel of the quotient map is easily seen to be isomorphic to $ker(\phi)$, while the quotient of the inclusion is easily seen to be the deformation complex of ${\cal A} \stackrel{F}{\rightarrow} {\cal B}
\stackrel{G}{\rightarrow} {\cal C}$.
\end{proof}

Combining Propositions \ref{uniontopushout}, \ref{trivialtriangle} and \ref{reducestotwoedges} then gives

\begin{proposition} \label{trivialthreecell}

Let $D$ be a pasting diagram which could arise by the following construction:  begin with a computad consisting of a composable 1-pasting diagram with any number of edges, a single
edge with the same domain and codomain as the 1-pasting diagram, a pair of 2-cells both with the resulting circle as boundary (and a 3-cell, or not).  Now, triangulate each 2-cell in a way corresponding to any parenthesization of the edges of the original 1-pasting diagram.  Apply a map of pasting schemes to the underlying pasting scheme of $k-lincat$ such that the value of every 2-cell is an identity natural transformation with domain given by two composable functors and codomain given by their composition.

If $D$ is such a pasting diagram, then the single edge in the initial part of the construction is labeled with the composition of the initial 1-pasting diagram, and if, moreover, $\Phi$ is the chain map from the deformations complex ${\frak C}^\bullet(D)$ to the direct sum indexed by the triangular 2-cells of $D$ whose coordinate for a 2-cell $K$ with edges labeled by a composable
pair of functors $F:{\cal A}\rightarrow {\cal B}$ and $G:{\cal B}\rightarrow {\cal C}$ and their composition $G(F)$ is given by projection onto ${\frak C}^\bullet(\partial K)$ followed by the map $\phi$ of Proposition \ref{trivialtriangle} onto $C^\bullet(G(F),G(F)$, then $ker(\Phi)$ classifies the deformations of $D$ in which all of the identity natural transformations are deformed trivially, and is isomorphic to the deformation complex of the original composable 1-pasting diagram.

\end{proposition}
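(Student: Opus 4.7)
The plan is to present $D$ as an iterated union of elementary triangles, invoke Proposition \ref{uniontopushout} to describe ${\frak C}^\bullet(D)$ as the corresponding iterated pushout, and then identify $ker(\Phi)$ cell-by-cell using Propositions \ref{trivialtriangle} and \ref{reducestotwoedges}.

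First, the claim about the labeling of the single long edge is immediate from the construction: since each triangular 2-cell is labeled by an identity natural transformation from a composable pair of functors to their composition, traversing the triangulation of either filling 2-cell and repeatedly applying the identity conditions forces the long edge to carry the full composition of the edges of the original 1-pasting diagram.

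For the main statement, I proceed by induction on the total number $n$ of triangular 2-cells in $D$. When $n = 1$, $D$ reduces to a single triangle with edges $F,G,G(F)$ whose 2-cell is $Id_{G(F)}$, and $\Phi$ is literally the map $\phi$ of Proposition \ref{trivialtriangle}. That proposition identifies $ker(\phi)$ as the complex classifying the desired trivially-deformed deformations, and Proposition \ref{reducestotwoedges} identifies $ker(\phi)$ with the deformation complex of ${\cal A} \stackrel{F}{\rightarrow} {\cal B} \stackrel{G}{\rightarrow} {\cal C}$. For the inductive step, choose a triangle $T$ sitting at the outside of one of the two triangulated fillings that shares with the remainder of $D$ only one edge (labeled by a composite functor) together with its source and target; write $D = D^\prime \cup T$, where $D^\prime$ has one fewer triangle and is itself of the form considered by the proposition. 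Proposition \ref{uniontopushout} presents ${\frak C}^\bullet(D)$ as the pushout of ${\frak C}^\bullet(D^\prime)$ and ${\frak C}^\bullet(T)$ over ${\frak C}^\bullet(D^\prime \cap T)$, and $\Phi$ decomposes as the pair $(\Phi^\prime, \phi_T)$ with $\phi_T$ the map of Proposition \ref{trivialtriangle} for $T$. Because the $C^\bullet(G(F),G(F))$ summand targeted by $\phi_T$ is local to $T$ and absent from ${\frak C}^\bullet(D^\prime \cap T)$, pushout and kernel commute, so $ker(\Phi)$ is the pushout of $ker(\Phi^\prime)$ and $ker(\phi_T)$ along the same intersection. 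The inductive hypothesis applied to $D^\prime$ and the base case applied to $T$ identify these kernels with the deformation complexes of the corresponding sub-1-pasting diagrams, and the pushout glues them into the deformation complex of the full 1-pasting diagram underlying $D$. That $ker(\Phi)$ then classifies the partially trivial deformations is the natural simultaneous extension of Proposition \ref{trivialtriangle} across the triangles.

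The main obstacle is a careful verification that taking kernels commutes with the pushout of Proposition \ref{uniontopushout} at each inductive step. Concretely, one must check that for the chosen boundary triangle $T$ the summand $C^\bullet(G(F),G(F))$ onto which $\phi_T$ projects is genuinely local to the $T$-side of the decomposition, not appearing in ${\frak C}^\bullet(D^\prime \cap T)$; this requires some bookkeeping about which identity-natural-transformation summands belong to which piece of the pushout. Once this triangle-locality is in hand, the kernel computation reduces summand-by-summand to the already-established two-edge case, and the identification of $ker(\Phi)$ with the deformation complex of the original 1-pasting diagram follows by assembling the three cited propositions.
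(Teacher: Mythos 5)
Your overall strategy is the same as the paper's: the paper's proof is a three-line sketch observing that triangulations of disks are shellable, so that $D$ can be assembled from the trivial triangles of Proposition \ref{trivialtriangle} by iterated pushouts (Proposition \ref{uniontopushout}), finishing with a pushout of the two triangulated disks along their common boundary, and noting that the 3-cell contributes nothing to the kernel once triviality is imposed. Your ear-peeling induction is exactly this shelling argument run in reverse, and your identification of the kernels via Propositions \ref{trivialtriangle} and \ref{reducestotwoedges} is the intended mechanism.

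However, as organized your induction does not quite close, for two reasons. First, when you write $D = D^\prime \cup T$ with $T$ an ear of one of the two triangulated fillings, the diagram $D^\prime$ is \emph{not} itself of the form covered by the proposition (one of its fillings is no longer a full triangulation of the polygon), so ``the inductive hypothesis applied to $D^\prime$'' is not licensed by the statement you are proving. The fix is the one the paper implicitly uses: induct over shellable partial unions of triangles, i.e.\ prove the corresponding statement for each intermediate stage of the shelling of a single disk, and only at the end perform the pushout of the two completed disks along the boundary circle. Second, your claim that $T$ ``shares with the remainder of $D$ only one edge'' is false once both fillings are present: an ear has two edges on the boundary circle, and those belong to the other triangulated disk, so $D^\prime \cap T$ is all of $\partial T$. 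This does not break the argument --- the intersection is still purely 1-dimensional, so the summand $C^\bullet(G(F),G(F))$ you need to kill is indeed local to $T$ --- but the justification you give for the locality rests on the wrong intersection. Finally, you should say a word about why the presence or absence of the 3-cell is immaterial to $ker(\Phi)$; the paper makes this an explicit (if unargued) part of its proof, and your sketch omits it entirely.
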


\begin{proof}
The key thing to note here is that because all triangulations of disks are shellable, the pasting diagram can be constructed out of the trivial triangles of Proposition \ref{trivialtriangle} by iterated pushouts with diagrams previously so constructed until a pushout of the two triangulated disks along their boundary is made, and that once the triviality condition are imposed the presence or absence of the 3-cell makes no difference to the kernel.
\end{proof}

Note also that a 2-dual form of Proposition \ref{trivialthreecell} in which all of the identity natural transformations have the composition as their domain and two composable functors as their codomain also holds with the same proof.


The proofs of Propositions \ref{trivialtriangle} and \ref{reducestotwoedges} can be extended to show

\begin{proposition} \label{fatpost1comp}
Let $D$ be the pasting diagram consisting of three categories ${\cal A}, {\cal B}, {\cal C}$, functors $F,G:{\cal A}\rightarrow {\cal B}, H:{\cal B}\rightarrow {\cal C}$, the composite functors $H(F)$ and $H(G)$, natural transformations $\sigma:F\Rightarrow G$, $\tau:H(F)\Rightarrow H(G)$ labeling bigons, and $Id_{H(F)}$ and $Id_{H(G)}$ labeling triangles with two edges labeled by the composands as domain and one edge labeled by the composition as codomain, and a single 3-cell asserting the commutativity of the diagram.  Letting $\Phi$ be the map with one coordinate for each triangle given by $\phi$ of Proposition \ref{trivialtriangle} for that 2-cell, $ker(\Phi)$ classifies the deformations of $D$ in which the two identity natural transformations are deformed trivially, and is isomorphic to the complex of Example \ref{equalspost1comp}.
\end{proposition}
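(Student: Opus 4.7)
(Sketch proposal)
The plan is to decompose $D$ by iterated pushouts into simpler subdiagrams to which the previously established reduction results apply, in the spirit of the proof of Proposition~\ref{trivialthreecell}. Write $D = T_1 \cup S \cup T_2$, where $T_1$ (resp.\ $T_2$) is the triangle whose 2-cell is $Id_{H(F)}$ (resp.\ $Id_{H(G)}$), and $S$ is the central subdiagram containing the bigons $\sigma$ and $\tau$, the composite-functor edges $H(F)$ and $H(G)$, together with the 3-cell asserting commutativity. Each intersection $T_i \cap S$ is a 1-dimensional pasting scheme (a pair of composable edges and their composite). By Proposition~\ref{uniontopushout}, ${\frak C}^\bullet(D)$ is the pushout of the inclusions induced by this decomposition.

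Next, I would apply Proposition~\ref{trivialtriangle} to each $T_i$ to see that the subcomplex classifying deformations of $T_i$ in which the identity natural transformation is deformed trivially is $\ker(\phi_i)$, and then invoke Proposition~\ref{reducestotwoedges} to identify $\ker(\phi_i)$ with the deformation complex of the composable 1-pasting scheme ${\cal A}\stackrel{F}{\to}{\cal B}\stackrel{H}{\to}{\cal C}$ (resp.\ with $G$ in place of $F$). Since formation of kernels commutes with the pushout in question (the inclusion and quotient splittings of Proposition~\ref{reducestotwoedges} being compatible with the inclusions $T_i \cap S \hookrightarrow T_i$), $\ker(\Phi)$ is obtained from ${\frak C}^\bullet(D)$ by replacing the Hochschild summands $C^{\bullet+2}(H(F))$ and $C^{\bullet+2}(H(G))$ by the sub-cochain complexes coming from Proposition~\ref{reducestotwoedges}, which is exactly the datum distinguishing the complex of Example~\ref{equalspost1comp} from what one would obtain by naive application of Proposition~8.3 of \cite{Yetter}.

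I would then verify row-by-row that the coboundary in $\ker(\Phi)$ matches the matrix written in Example~\ref{equalspost1comp}. The $F$, $G$, $H$ rows and the $\sigma$ and $\tau$ rows are unchanged from ${\frak C}^\bullet(D)$; the $H(F)$ and $H(G)$ rows are replaced by the pairs of entries $(G_*,F^*)$ coming from the identification of Proposition~\ref{reducestotwoedges}. The 3-cell row in ${\frak C}^\bullet(D)$ a priori involves the two identity natural transformations in the pasting composition expressing $H(\sigma)$; but under the triviality constraint all higher-order parallels of these identities vanish, and the surviving contributions collapse precisely to the row $(\cdot)\{\sigma\}$, $H_*$, $-\mathrm{Id}$ of Example~\ref{equalspost1comp}. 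Finally, the obstruction claim is proved exactly as in the proofs of Propositions~\ref{trivialtriangle} and \ref{reducestotwoedges}: the polygonal configurations for the two trivial triangles have all non-hexagonal faces evaluating to zero in positive degree once the identity natural transformations have zero higher-order parallels, so the obstructions in ${\frak C}^\bullet(D)$ actually lie in the subcomplex $\ker(\Phi)$ by the general principles of the polygonal method.

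The main obstacle will be the bookkeeping that makes the isomorphism between $\ker(\Phi)$ and the complex of Example~\ref{equalspost1comp} explicit at the level of matrix entries, and in particular the verification that the 3-cell's coboundary row in ${\frak C}^\bullet(D)$ really does collapse under the triviality constraint to the prescribed row of Example~\ref{equalspost1comp} without spurious terms; this is the point at which the naturality of the identity 2-cells and the vanishing of their higher-order parallels must be combined carefully.
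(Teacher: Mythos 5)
Your proposal is correct and follows essentially the route the paper intends: the paper gives no explicit proof here, saying only that the proofs of Propositions \ref{trivialtriangle} and \ref{reducestotwoedges} extend, and your sketch carries out exactly that extension via the pushout decomposition of Proposition \ref{uniontopushout}. (Two small slips that do not affect the argument: the intersections $T_i\cap S$ contain $F$ or $G$ together with the composite $H(F)$ or $H(G)$ but not $H$ itself, and in this post-composition case the replaced rows carry entries built from $H_*$, $F^*$ and $G^*$ as in Example \ref{equalspost1comp}, not the $(G_*,F^*)$ pair of the precomposition case.)
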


\noindent and

\begin{proposition} \label{fatpre1comp}
Let $D$ be the pasting diagram consisting of three categories ${\cal A}, {\cal B}, {\cal C}$, functors $F:{\cal A}\rightarrow {\cal B}, G,H:{\cal B}\rightarrow {\cal C}$, the composite functors $F(G)$ and $F(H)$, natural transformations $\sigma:F\Rightarrow G$, $\tau:F(G)\Rightarrow F(H)$ labeling bigons, and $Id_{F(G)}$ and $Id_{F(H)}$ labeling triangles with two edges labeled by the composands as domain and one edge labeled by the composition as codomain, and a single 3-cell asserting the commutativity of the diagram.  Letting $\Phi$ be the map with one coordinate for each triangle given by $\phi$ of Proposition \ref{trivialtriangle} for that 2-cell, $ker(\Phi)$ classifies the deformations of $D$ in which the two identity natural transformations are deformed trivially, and is isomorphic to the complex of Example \ref{equalspre1comp}.
\end{proposition}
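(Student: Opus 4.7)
The plan is to mirror the argument sketched for Proposition \ref{fatpost1comp}, proceeding by iterated application of Propositions \ref{trivialtriangle}, \ref{reducestotwoedges}, and \ref{uniontopushout}.  First I would decompose $D$ as a pushout of its two triangular 2-cells (whose boundaries are labeled $F, G, G(F)$ and $F, H, H(F)$ and whose faces are labeled by $Id_{G(F)}$ and $Id_{H(F)}$ respectively --- I am correcting what appears to be a typographical substitution of $F(G), F(H)$ for $G(F), H(F)$ in the statement) together with a ``core'' subdiagram consisting of the two bigons labeled by $\sigma$ and $\tau$ and the 3-cell enforcing $\sigma_F = \tau$.  By Proposition \ref{uniontopushout} the deformation complex ${\frak C}^\bullet(D)$ is the corresponding pushout of the three deformation subcomplexes along the inclusions of the deformation complexes of their shared boundaries.

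Next I would apply Proposition \ref{trivialtriangle} to each triangle to describe the summand of $ker(\Phi)$ contributed by that triangle.  Proposition \ref{reducestotwoedges} then identifies each such summand with the deformation complex of the corresponding composable pair of functors $\cal A \stackrel{F}{\rightarrow} \cal B \stackrel{G}{\rightarrow} \cal C$ (respectively with $H$ in place of $G$), thereby eliminating from the final description all cochain groups associated with deformations of the two identity natural transformations themselves.

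Assembling these identifications through the pushout of the first step, the surviving summands are precisely those indexing deformations of $\cal A$, $\cal B$, $\cal C$, of the three functors $F$, $G$, $H$, of the composites $G(F)$ and $H(F)$, and of the two natural transformations $\sigma$ and $\tau$, subject to the 3-cell relation $\sigma_F = \tau$.  A direct comparison of matrix entries then matches the inherited differential to the explicit coboundary displayed in Example \ref{equalspre1comp}, completing the claimed isomorphism of complexes.

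The main obstacle is verifying that the obstructions to deforming $D$ with the two identity natural transformations deformed trivially actually lie in $ker(\Phi)$, rather than merely projecting to cocycles there.  As in the proofs of the preceding partially-trivial deformation propositions, this comes down to a polygonal-method calculation in the spirit of Figure \ref{pre1composition_obstructions_are_cocycles}: on setting every positive-degree parallel of $Id_{G(F)}$ and $Id_{H(F)}$ to zero, each face of an appropriately chosen tiled sphere either becomes trivial in the sense of Proposition \ref{trivial} or contributes an obstruction-type expression whose summands vanish by the already-verified lower-order conditions.  The conclusion will then follow from Propositions \ref{elementary}, \ref{strong_works}, \ref{nontrivial}, and \ref{trivial}.
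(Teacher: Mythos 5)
Your proposal is correct and follows essentially the route the paper intends: the paper offers no separate proof of this proposition, asserting only that the arguments of Propositions \ref{trivialtriangle} and \ref{reducestotwoedges} extend, which is exactly what you carry out via the pushout decomposition of Proposition \ref{uniontopushout} together with a polygonal-method verification that the obstructions land in $ker(\Phi)$ (and you are right that $F(G),F(H)$ and $\sigma:F\Rightarrow G$ in the statement are typographical slips for $G(F),H(F)$ and $\sigma:G\Rightarrow H$). One minor bookkeeping correction: after applying Proposition \ref{reducestotwoedges} the summands for the composites $G(F)$ and $H(F)$ are \emph{eliminated} rather than surviving, which is precisely why the resulting complex has the same list of summands as Example \ref{equalspre1comp}.
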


Observe, that the conclusion of both Proposition \ref{fatpost1comp} and \ref{fatpre1comp} hold regardless of which part of the boundary (two composable functors or their composition) is the domain and which is the codomain of each of the triangular faces labeled with identity natural transformations.  

\section{All Obstructions are Cocycles}

We are now in a position to prove the main result:  that the deformations of any pasting diagram are classified by deformation complex in which all obstructions to extending a deformation of order $n$ to a deformation of order $n+1$ are cocycles.  To do this we will replace the diagram with a more complex diagram with simpler parts, whose deformation theory, when some of the 2-cells are required to be deformed trivially, coincides with that of the original pasting diagram.

\begin{definition}
A pasting diagram is {\em finely divided} if every 2-cell is either a bigon or a triangle labeled with the identity natural transformation, and every 3-cell is of one of the following forms:

\begin{itemize}
\item a composition-free pillow with bigonal cross-section (imposing the equality of two natural transformations between the same pair of functors)
\item a diagram of the form in Proposition \ref{trivialthreecell} or its 2-dual
\item a "triangular pillow" as in Example \ref{triangularpillow}
\item a diagram of the form in Example \ref{equalspost1comp}
\item a diagram of the form in Proposition \ref{fatpost1comp}
\item a diagram of the form in Example \ref{equalspre1comp}
\item a diagram of the form in Proposition \ref{fatpre1comp}
\end{itemize}
\end{definition}

\begin{proposition}
Every pasting diagram $D$ can be replaced with a finely divided pasting diagram $f(D)$ whose underlying cell complex is a subdivision of the underlying cell complex of $D$ and whose deformations when all identity natural transformations labeling triangles are deformed trivially are equivalent to deformations of $D$.
\end{proposition}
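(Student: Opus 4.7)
The plan is to construct $f(D)$ by subdividing the cell complex of $D$, replacing each cell of $D$ by an aggregate of finely-divided cells, and then to deduce the equivalence of deformation theories from Proposition \ref{uniontopushout} (the deformation complex of a union is a pushout) together with the kernel-descriptions of Propositions \ref{trivialtriangle}, \ref{trivialthreecell}, \ref{fatpost1comp} and \ref{fatpre1comp}.

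First, for each 2-cell of $D$ whose source or target 1-pasting scheme has length greater than one, I introduce a fresh 1-cell labeled by the composite functor, and triangulate the disk between the original path and the new edge using 2-cells labeled by identity natural transformations in the sense of Proposition \ref{trivialthreecell}. The original 2-cell is thereby replaced by a bigon whose source and target are the newly introduced composite functors. By Proposition \ref{trivialthreecell}, when these identity natural transformations are required to be deformed trivially, the kernel of the resulting projection recovers exactly the original deformation data for that 2-cell and its boundary 1-pasting schemes.

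Next, for each 3-cell of $D$ I decompose its equational content into elementary pieces. Power's uniqueness theorem \cite{Power} lets me write each of the two boundary 2-pasting schemes as an iterated sequence of elementary operations, namely 2-composition of natural transformations and pre- or post-composition of a natural transformation by a functor. For each such elementary operation on each side I introduce a fresh bigon labeled by the intermediate natural transformation, bounded by an elementary 3-cell of the appropriate type from the finely-divided list (Example \ref{triangularpillow}, Proposition \ref{fatpost1comp}, or Proposition \ref{fatpre1comp}, with additional identity triangles inserted where needed). After all intermediate natural transformations have been named, the original 3-cell is reduced to a composition-free pillow with bigonal cross-section identifying the two fully-assembled bigons on either side.

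Finally, because $f(D)$ is assembled from its finely-divided pieces by iterated gluing along shared boundaries, its deformation complex is, by Proposition \ref{uniontopushout}, an iterated pushout of the deformation complexes of the pieces. Restricting to deformations in which all identity natural transformations on triangles are deformed trivially, Propositions \ref{trivialtriangle}, \ref{trivialthreecell}, \ref{fatpost1comp} and \ref{fatpre1comp} identify each piece's contribution with the corresponding piece of the deformation complex of $D$, so the same iterated pushout yields the deformation complex of $D$. The main obstacle is the combinatorial bookkeeping for the 3-cell replacement: one must verify that the order in which one introduces the auxiliary bigons and elementary 3-cells does not affect the resulting diagram up to isomorphism, and that the shellability-type argument underlying Proposition \ref{trivialthreecell} extends to cover the various triangulations of the 2-pasting schemes that arise on the boundary of a general 3-cell. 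Modulo this combinatorial input, the equivalence of deformation theories is immediate from the pushout description.
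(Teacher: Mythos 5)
Your construction matches the paper's essentially step for step: subdivide each 2-cell into a bigon carrying the original natural transformation plus identity-labeled triangles realizing a parenthesized composition of its domain and codomain paths, then, for a chosen order of pasting composition of each remaining 3-cell's boundary, insert a triangular pillow for each binary 2-composition and a diagram of Proposition \ref{fatpost1comp} or \ref{fatpre1comp} for each 1-composition, ending with a composition-free bigonal pillow, and recover the deformation complex of $D$ from Proposition \ref{uniontopushout} together with the kernel identifications. The single item you defer as ``combinatorial input'' --- that the residual 3-cells are genuinely of the form required by Proposition \ref{trivialthreecell} --- is exactly the verification the paper carries out, by maintaining through the construction a family of ``nicely triangulated 2-cells'' (each with a composable 1-pasting diagram as domain and the single composite edge as codomain) which absorb the new identity triangles at each insertion, so that at the end every leftover 3-cell is bounded by a union of two such nicely triangulated disks.
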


\begin{proof}  Begin by subdividing each 2-cell as follows:  insert a bigon with 1-cells labeled by the 1-composition of the domain and codomain of the 2-cell and labeled by the same natural transformation as the original 2-cell, making the domain (resp. codomain) of the bigon coincide with that of the original 2-cell if the domain (resp. codomain) consists of a single 1-cell.  Label the complementary cell(s) with identity natural transformations to be deformed trivially.  

Now each of the complementary cells has a domain (resp. codomain) which is a sequence of 1-cells labeled by functors and a codomain (resp. domain) which is a single 1-cell labeled with their composition.   Choose a parenthesization of the composition and subdivide the 2-cell in the usual way corresponding to a parenthesization, labeling the new 1-cells with the appropriate pairwise compositions of labels already present and all 2-cells with identity natural transformation (to be deformed trivially).

Now, for each 3-cell which is not already of one of the forms specified in the proposition, choose an order of pasting composition for the domain (resp. codomain), and use this to create a subdivision of the 3-cell as follows:

Iteratively, for each 1-composition of a natural transformation with a functor, insert a diagram of the form in Proposition \ref{fatpost1comp} or \ref{fatpre1comp} as appropriate and for each binary 2-composition, insert a triangular pillow.  When this has been done for both the domain and codomain, there will be bigons labeled with the pasting compostion of the domain and codomain of the original 3-cell.  Identify their domains (resp. codomains).  The 3-cells of the resulting subdivision are now those explicitly inserted in the construction, a composition-free pillow with bigonal cross-section, and, we claim, 3-cells all of the form in Proposition \ref{trivialthreecell}.

Verifying the claim is a matter of keeping track of the triangular faces in such a way as to organize them into
the pairs of triangulated disks with appropriate boundaries:  Note that immediately after the subdivision of 2-cells into a bigon and triangles, the triangulated cell(s) (if any -- the original 2-cell could have been a bigon) are of the desired form:  they have a domain (resp. codomain) consisting of a composable 1-pasting diagram and codomain (resp. domain) consisting of a single edge labeled with the composition. Call such a triangulated 2-cell a ``nicely triangulated 2-cell''.  Thus the remainder of the construction begins with the original 2-cells decomposed into a family of nicely triangulated 2-cells and family of bigons.  Throughout the rest of the construction the family of nicely triangulated 2-cells will be iteratively replaced with families with fewer nicely triangulated 2-cells (but with more triangles in the newer ones).  For addition of a 3-cell of the forms in Propositions \ref{fatpost1comp} or \ref{fatpre1comp}, the triangles will have as two of their edges the single edge labeled with the composition from previously constructed nicely triangulated 2-cells.  For each of the triangles, replace these two nicely triangulated 2-cells with the union of the triangle and the two nicely triangulated 2-cells, noticing that it is nicely triangulated.
When the triangular pillows and the bigonal pillow are adjoined, the family of nicely triangulated 2-cells is unchanged.

Thus, when the construction ends, the remaining 3-cells in the decomposition must each be bounded by the union of two nicely triangulated 2-cells.
\end{proof}

We can now prove the main result (recalling the dimension conventions of \cite{Yetter} which leaves the deformation cohomology of a pasting digram in dimension $-1$) :

\begin{theorem} \label{main}
For any pasting diagram $D$ of $k$-linear categories, $k$-linear functors and natural transformations, the deformations of $D$ are classified by the $-1$-cohomology of the $ker(\Phi)$, where $\Phi$ is the map from the deformation complex ${\frak C}^\bullet(f(D))$ given by the same description as in Proposition \ref{trivialthreecell}, and all obstructions to extension of a degree $n$ deformation to a degree $n+1$ deformation are 0-cocycles.
\end{theorem}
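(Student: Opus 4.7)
By the preceding proposition, deformations of $D$ are equivalent to deformations of $f(D)$ in which all identity natural transformations labeling triangles are deformed trivially, so it suffices to establish the classification and obstruction statements for such restricted deformations of $f(D)$. My plan has three parts, which I would carry out in order.

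First, I would check that first-order deformations of $f(D)$ subject to these triviality conditions are classified by $H^{-1}(\ker(\Phi))$. This is essentially a restatement of the definition of $\Phi$: a first-order deformation of $f(D)$ with each identity natural transformation labeling a triangle deformed trivially is precisely a $(-1)$-cocycle of ${\frak C}^\bullet(f(D))$ whose image under each coordinate of $\Phi$ vanishes, and the gauge transformations between first-order deformations that preserve the triviality conditions are precisely those given by $(-2)$-cochains already lying in $\ker(\Phi)$.

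Second, I would verify that the degree-$n$ obstruction $\omega^{(n)}$ lies in $\ker(\Phi)$ and is a $0$-cocycle there. By Proposition \ref{uniontopushout}, ${\frak C}^\bullet(f(D))$ is an iterated pushout of the deformation complexes of the individual 3-cells of $f(D)$, and $\omega^{(n)}$ decomposes accordingly. For each type of 3-cell appearing in a finely divided diagram, a cocycle statement for the corresponding summand of $\omega^{(n)}$ has already been established: composition-free bigonal pillows by \cite{Yetter}, triangular pillows and the diagrams of Examples \ref{equalspost1comp} and \ref{equalspre1comp} by Theorem \ref{singlecomposition}, and the ``trivially-labeled'' 3-cells of Propositions \ref{trivialthreecell}, \ref{fatpost1comp} and \ref{fatpre1comp} (and the 2-dual of the first) by those propositions themselves. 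Assembling these summand-wise statements via the pushout structure gives both the cocycle property in the full complex and, by virtue of the triviality conditions built into the last three propositions, the vanishing of each coordinate of $\Phi(\omega^{(n)})$.

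Third, for the extension step, if $\omega^{(n)} = d\eta$ for some $\eta \in \ker(\Phi)$, then setting the degree-$n$ parallels from the components of $\eta$ extends the deformation to order $n+1$, and the hypothesis $\eta \in \ker(\Phi)$ guarantees that the extended deformation still has all triangle-labeling identities deformed trivially. The main obstacle I anticipate is the ``$\omega^{(n)} \in \ker(\Phi)$'' claim in the second step: while the cocycle results cited above handle each summand in isolation, showing that the projection of the obstruction onto each trivially-deformed triangle summand vanishes requires an induction on $n$ in which earlier-order triviality conditions collapse most terms of the calculation, in the same spirit as the term-by-term argument used in the proof of the proposition classifying deformations of a functor $F$ in which $F$ is deformed trivially, and cleanly encoded via the non-trivial/trivial whiskered-polygon dichotomy of Propositions \ref{nontrivial} and \ref{trivial}. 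Once this inductive check is carried out uniformly across the fine division, the theorem follows by assembling the pieces.
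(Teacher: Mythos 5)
Your proposal is correct and follows essentially the same route as the paper's proof: reduce to the finely divided diagram $f(D)$ with triviality constraints via the preceding proposition, identify the classifying complex with $ker(\Phi)$ using the pushout structure of Proposition \ref{uniontopushout} together with the partial-triviality propositions, and verify the cocycle condition cell by cell using \cite{Yetter} for the composition-free cells, Theorem \ref{singlecomposition} for the single-composition cells, and the trivially-deformed-cell results for the rest. The extra inductive care you flag for showing $\omega^{(n)} \in ker(\Phi)$ is precisely what Propositions \ref{trivialtriangle}, \ref{fatpost1comp} and \ref{fatpre1comp} already supply, so no new argument is needed there.
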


\begin{proof}
For the classification statement, first observe that the proof of Proposition \ref{uniontopushout} applies equally well to taking unions of pasting diagrams in which some 2-cells have been specified as deforming trivially (provided that cells in the intersection are specified as deforming trivially in both pasting diagrams of which the union is being taken).  The isomorphisms of Propositions \ref{trivialthreecell}, \ref{fatpost1comp} and \ref{fatpre1comp} then combine by universal property of pushouts to give an isomorphism between ${\frak C}^\bullet(D)$ and $ker(\Phi)$ in the category of chain complexes.  

That all obstructions are cocycles is a condition local to each cell together with its boundary, and all cells of $f(D)$ are either deformed trivially, so that the obstruction vanishes by triviality, are of the forms which were shown to have vanishing obstructions in Theorem \ref{singlecomposition}, or are composition-free and thus have vanishing obstructions by the results of \cite{Yetter}.
\end{proof}

\section{Concluding Remarks}

The primary purpose of this paper, to complete the deformation theory for pasting diagrams of $k$-linear categories described in \cite{Yetter}, was accomplished by Theorem \ref{main}.  Its primary importance, however, may lie less in the result than in the techniques used.  It is the authors' intent to apply the deformation theory of pasting diagrams to the still-open problem of providing a complete deformation theory for monoidal categories in which all arrow-valued elements are deformed simultaneously, rather than just the structure maps as in \cite{Yetter_book}.  This will require using extensions of ${\mathbb T}(D)$ hinted at above, in which monoidal prolongations are included as additional operations, as the basis of the polygonal method.  It will also avoid the difficulties in \cite{Shrestha} arising from the need to intuit the correct formulas for higher differentials in a multicomplex from the scant data provided by the instances arising in the deformation theory.

Likewise, although for the present purpose there is something unsatisfying about our detour though finely divided pasting diagrams with specified trivially-deformed identity cells -- we believe the direct result, that all obstructions in the deformation complex of any pasting diagram as defined in \cite{Yetter} are cocycles, though it is beyond our present ability to prove -- like the polygonal technique, this detour may have other applications.  The difficulty with the direct result was that although 2-categories, and the pieces of them corresponding to direct summands in deformation complexes are inherently ``globular'', pasting diagrams are ``opetopic''.  The reduction of the opetopic to trivial (mostly simplicial) elements and globular elements might well find application in connecting the zoo of opetopic definitions of weak n-categories with the globular approach of Batanin.


\begin{thebibliography}{1}

\bibitem{FS} P.\ Freyd and A. Scedrov {\em Categories, Allegories} (North-Holland Mathematical Library volume 39) North-Holland, Amsterdam 1990.

\bibitem{G} M.\ Gerstenhaber, ``On the deformation of rings and algebras'', {\em Annals of Math.} 79 (1) (1964) 59-103.

\bibitem{GS} M.\ Gerstenhaber and S.D.\ Schack, ``On the deformation of algebra morphisms and diagrams,'' {\em Trans. Amer. Math. Soc.}, 279 (1983) 1-50.

\bibitem{GV} M.\ Gerstenhaber and A.A.\ Voronov,
"Homotopy G-algebras and moduli space operad",
{\em Int. Math. Res. Notices} 3 (1995)
141-153.

\bibitem{Power} J.\ Power, ``An $n$-categorical pasting theorem'' in {\em Category Theory} SLNM no. 1488 (A.\ Carboni, et al.\ eds.) (1991) 326-358. 

\bibitem{Shrestha} T.\ Shrestha, {\em Algebraic Deformation of a $k$-linear Monoidal Category}, Kansas State University doctoral dissertation (2010).

\bibitem{Street} R.\ Street, ``The algebra of oriented simplexes'', {\em J. Pure and App. Alg.} 49 (1987) 283-335.

\bibitem{Yetter} D.N.\ Yetter ``On Deformations of Pasting Diagrams'' {\em Theory and Application of Categories} 22 (2009) 24-53.

\bibitem{Yetter_book} D.N.\ Yetter {\em Functorial Knot Theory}, World Scientific (2001).

\end{thebibliography}
\end{document}